\documentclass[reqno,oneside,11pt,english]{amsart}
\usepackage{amsmath,amsfonts,amssymb,amsthm,epsfig}
\newcommand{\RNum}[1]{\uppercase\expandafter{\romannumeral #1\relax}}
\usepackage{comment}

\usepackage{color}
\usepackage[pagebackref,colorlinks,citecolor=blue,linkcolor=blue]{hyperref}
\usepackage{graphicx}
\usepackage{titletoc,epsf}
\usepackage{amsmath,amsfonts,latexsym,amsthm,amsxtra,amssymb,bbm}
\allowdisplaybreaks
\usepackage{graphicx,float}
\usepackage{tikz}
\usetikzlibrary{intersections,patterns}
\usetikzlibrary{calc,spy}
\usepackage{ifthen}
\usepackage{float}
\usepackage{cite}
\allowdisplaybreaks

\newtheorem{thm}{Theorem}[section]
\newtheorem{lem}[thm]{Lemma}
\newtheorem{op}[thm]{Open Problem}
\newtheorem{cor}[thm]{Corollary}
\newtheorem{prop}[thm]{Proposition}
\newtheorem{step}{Step}[section]

\providecommand{\vint}[1]{\mathchoice
	{\mathop{\vrule width 5pt height 3 pt depth -2.5pt
			\kern -9pt \kern 1pt\intop}\nolimits_{\kern -5pt{#1}}}
	{\mathop{\vrule width 5pt height 3 pt depth -2.6pt
			\kern -6pt \intop}\nolimits_{\kern -3pt{#1}}}
	{\mathop{\vrule width 5pt height 3 pt depth -2.6pt
			\kern -6pt \intop}\nolimits_{\kern -3pt{#1}}}
	{\mathop{\vrule width 5pt height 3 pt depth -2.6pt
			\kern -6pt \intop}\nolimits_{\kern -3pt{#1}}}}

\newtheorem{cl}{Claim}[section]

\newtheorem{ca}{Case}[section]
\newtheorem{sca}[ca]{Sbucase}
\newtheorem{scl}[cl]{Subclaim}
\newtheorem{conj}{Conjecture}

\theoremstyle{definition}
\newtheorem{defn}[thm]{Definition}
\newtheorem*{ques*}{Question}
\newtheorem{ques}{Question}
\newtheorem{rem}[thm]{Remark}
\newtheorem{exam}[thm]{Example}

\newcounter {own}
\def\theown {\thesection       .\arabic{own}}
\numberwithin{equation}{section}

\newenvironment{pf}[1][]{%
	\vskip 3mm
	\noindent
	\ifthenelse{\equal{#1}{}}%
	{{\slshape Proof. }}%
	{{\slshape #1.} }%
}%
{\qed\bigskip}


\newcommand{\HH}{{\mathcal H}}
\newcommand{\IR}{{\mathbb R}}
\newcommand{\IN}{{\mathbb N}}

\newcommand{\Lip}{{\operatorname{Lip}\,}}

\newcommand{\diam}{{\operatorname{diam}\,}}
\newcommand{\spt}{{\operatorname{spt}}}

\newcommand{\dist}{{\operatorname{dist}}}

\newcommand{\bas}{\begin{assertion}}
	\newcommand{\eas}{\end{assertion}}
\newcommand{\ben}{\begin{enumerate}}
	\newcommand{\een}{\end{enumerate}}
\newcommand{\bst}{\begin{step}}
	\newcommand{\est}{\end{step}}

\def\be{\begin{equation}}
	\def\ee{\end{equation}}

\newcommand{\bee}{\begin{enumerate}}
	\newcommand{\eee}{\end{enumerate}}

\newcommand{\blem}{\begin{lem}}
	\newcommand{\elem}{\end{lem}}
\newcommand{\bthm}{\begin{thm}}
	\newcommand{\ethm}{\end{thm}}
\newcommand{\bcor}{\begin{cor}}
	\newcommand{\ecor}{\end{cor}}
\newcommand{\beg}{\begin{exam}}
	\newcommand{\eeg}{\end{exam}}
\newcommand{\begs}{\begin{examples}}
	\newcommand{\eegs}{\end{examples}}
\newcommand{\bdefe}{\begin{defn}}
	\newcommand{\edefe}{\end{defn}}
\newcommand{\bprob}{\begin{prob}}
	\newcommand{\eprob}{\end{prob}}
\newcommand{\bques}{\begin{ques}}
	\newcommand{\eques}{\end{ques}}
\newcommand{\bei}{\begin{itemize}}
	\newcommand{\eei}{\end{itemize}}
\newcommand{\bcon}{\begin{conj}}
	\newcommand{\econ}{\end{conj}}
\newcommand{\bop}{\begin{op}}
	\newcommand{\eop}{\end{op}}

\newcommand{\bstep}{\begin{step}}
	\newcommand{\estep}{\end{step}}

\newcommand{\bca}{\begin{ca}}
	\newcommand{\eca}{\end{ca}}
\newcommand{\bsca}{\begin{sca}}
	\newcommand{\esca}{\end{sca}}

\newcommand{\bcl}{\begin{cl}}
	\newcommand{\ecl}{\end{cl}}

\newcommand{\bscl}{\begin{scl}}
	\newcommand{\escl}{\end{scl}}

\newcommand{\bcons}{\begin{conjs}}
	\newcommand{\econs}{\end{conjs}}
\newcommand{\bprop}{\begin{prop}}
	\newcommand{\eprop}{\end{prop}}
\newcommand{\br}{\begin{rem}}
	\newcommand{\er}{\end{rem}}
\newcommand{\brs}{\begin{rems}}
	\newcommand{\ers}{\end{rems}}
\newcommand{\bo}{\begin{obser}}
	\newcommand{\eo}{\end{obser}}
\newcommand{\bos}{\begin{obsers}}
	\newcommand{\eos}{\end{obsers}}
\newcommand{\bpf}{\begin{pf}}
	\newcommand{\epf}{\end{pf}}
\newcommand{\ba}{\begin{array}}
	\newcommand{\ea}{\end{array}}
\newcommand{\beq}{\begin{eqnarray}}
	\newcommand{\beqq}{\begin{eqnarray*}}
		\newcommand{\eeq}{\end{eqnarray}}
	\newcommand{\eeqq}{\end{eqnarray*}}

\newcounter{minutes}
\divide\time by 60
\newcounter{hours}
\multiply\time by 60 \addtocounter{minutes}{-\time}

\usepackage{fancyhdr}
\newcommand{\papername}{\scriptsize {MANZI HUANG, PANU LAHTI, JIANG LI, AND ZHUANG WANG}}    
\newcommand{\papertitle}{\scriptsize {BOXING INEQUALITY AND POINCAR\'E-TYPE INEQUALITIES ON JOHN DOMAINS WITH BBM FACTOR}} 

\fancypagestyle{plain}{%
  \fancyhf{}
  \fancyhead[C]{\ifthenelse{\isodd{\value{page}}}{\papername}{\papertitle}}
  \fancyfoot[C]{\thepage}
  
}

\begin{document}

	\title[]{Boxing inequalities for relative fractional perimeter and fractional Poincar\'e-type  inequalities on John domains with the BBM factor}


	\author{Manzi Huang, Panu Lahti, Jiang Li$^{*}$, and Zhuang Wang}

		\address[Manzi Huang]{Key Laboratory of Computing and Stochastic Mathematics (Ministry of Education), School of Mathematics and Statistics, Hunan Normal University, Changsha, Hunan 410081, P.R. China.}
	\email{mzhuang@hunnu.edu.cn}
	
    \address[Panu Lahti]{Academy of Mathematics and Systems Science, 
        Chinese Academy of Sciences, Beijing 100190, P.R. China.}
	\email{panulahti@amss.ac.cn}
	
	\address[Jiang Li]{Key Laboratory of Computing and Stochastic Mathematics (Ministry of Education), School of Mathematics and Statistics, Hunan Normal University, Changsha, Hunan 410081, P.R. China.}
	\email{jiang\_li@hunnu.edu.cn; jiangli.math@qq.com}

	\address[Zhuang Wang]{Key Laboratory of Computing and Stochastic Mathematics (Ministry of Education), School of Mathematics and Statistics, Hunan Normal University, Changsha, Hunan 410081, P.R. China.}
	\email{zwang@hunnu.edu.cn}

	\date{}

	\subjclass[2020]{Primary: 46E35; Secondary: 30C65, 30L15.}
	\keywords{Boxing inequality, relative fractional   perimeter, fractional Poincar\'e--Wirtinger trace inequality, fractional Sobolev--Poincar\'e inequality, John domain.}
	\thanks{$^*$Corresponding author: Jiang Li}

\begin{abstract}
	For \(0<\delta,\tau<1\) and \(1\le s\le \frac{n}{n-\delta}\), we prove that for a given \(s\)-John domain
	\(\Omega\subset \IR^n\), the following Boxing inequality holds for every Lebesgue measurable set \(U\subset\Omega\) with \(|U|/|\Omega|\le\gamma<1\):
	\[
	\mathcal{H}^{s(n-\delta)}_{\infty}(U\setminus\mathcal{N}_U)\le C(1-\delta)\int_\Omega\int_{|x-y|<\tau\operatorname{dist}(y,\partial\Omega)}\frac{|\chi_U(x)-\chi_U(y)|}{|x-y|^{n+\delta}}\,dx\,dy,
	\]
	where \(\mathcal{H}^{s(n-\delta)}_{\infty}(U)\) denotes the \(s(n-\delta)\)-dimensional Hausdorff content of \(U\), \(\mathcal{N}_U\) is a set of Lebesgue measure zero and the constant \(C\) depends only on \(n,\tau,s,\gamma\), the John constant and the diameter of \(\Omega\). Moreover, we establish the functional formulation of the above Boxing inequality and discuss the equivalence between these two formulations. Based on the Boxing inequality, we prove the fractional Poincaré--Wirtinger trace inequality on \(s\)-John domains, of which the fractional Sobolev--Poincaré inequality and fractional Hardy-type inequality are special cases. Notably, we prove all of the aforementioned inequalities with the Bourgain--Brezis--Mironescu (BBM) factor \(1-\delta\). Furthermore, with the aid of the Bourgain--Brezis--Mironescu formula, we recover the Poincaré--Wirtinger trace inequality. Finally, by showing that, under the separation property, any domain supporting the Boxing inequality is necessarily a John domain, we conclude that
	the John domain condition is essentially sharp for
	the above inequalities. All the above inequalities with the BBM factor are new even for Lipschitz domains.
\end{abstract}

\date{\today}


\maketitle

\tableofcontents

\section{\large Introduction}
Let $\IR^n$ be the $n$-dimensional Euclidean space with $n\geq 2$. The classical Boxing inequality,  first proved by Gustin \cite[page 232]{Gustin60}, can be stated as follows. There exists a constant $C=C(n)>0$ such that  for every bounded open set $U\subset \IR^n$  with smooth boundary, one can find a covering $\{B(x_i,r_i)\}_{i=1}^{\infty}$ of $U$ for which
\begin{equation}\label{eq:inte-boxing-whole}
    \sum_{i=1}^{\infty}r_i^{n-1}\leq C~\text{Per}(U).
\end{equation}
Here $\text{Per}(U)$ denotes the perimeter of $U$. By definition (cf.\cite[page 194]{EGbook}), this perimeter coincides with the 
$(n-1)$-dimensional Hausdorff measure of the topological boundary of 
$U$, denoted by $\HH^{n-1}(\partial U)$
{(for definitions, see Section \ref{sec:prelis}).}

For any Lebesgue measurable set $U\subset \IR^n$, define
\[
P_\delta(U):=2\int_{U}\int_{\IR^n\setminus U}\frac{1}{|x-y|^{n+\delta}}\,dx\,dy=[\chi_U]_{W^{\delta, 1}(\IR^n)},
\]
where $[\chi_U]_{W^{\delta, 1}(\IR^n)}$ is the fractional Sobolev energy of $\chi_{U}$; see Section \ref{FP}.  The quantity $P_\delta$  appeared in \cite{Vis91,Mazya03} and has been called the {\it fractional perimeter} \cite{CaRoSa10} or {\it non-local $\delta$-perimeter} \cite{BLP14}. 

Ponce--Spector  \cite[Theorem 1.2]{PS20} studied the Boxing inequality related to the fractional perimeter $P_\delta$. They showed that there exists a constant $C=C(n)>0$ such that  for every bounded open set $U\subset \IR^n$, one can find a covering $\{B(x_i,r_i)\}_{i=1}^{\infty}$ of $U$ for which
\begin{equation}\label{frac-boxing-whole}
	\sum_{i=1}^{\infty}r_i^{n-\delta}\leq C\delta(1-\delta)P_\delta(U).
\end{equation}
According to the commendable work by Davil\'a \cite[Theorem 1]{Davila02} and Maz'ya--Shaposhnikova \cite[Theorem 3]{MS02}, $P _\delta(U)$ satisfies the  asymptotics
\begin{equation}\label{BBM-MT-Theorem}
	\lim_{\delta\rightarrow 1^-}(1-\delta)P _\delta(U)=C^{\prime}(n)\,\text{Per}(U) \qquad\text{and}\qquad \lim_{\delta\rightarrow 0^+}\delta P _\delta(U)=C^{\prime\prime}(n)\,|U|,
\end{equation}
where $|U|$ denotes the Lebesgue measure of $U$.
Therefore, the Boxing inequality \eqref{frac-boxing-whole} allows one to recover the endpoint cases (cf. \cite[page 123]{PS20}): one endpoint (letting $\delta\rightarrow 1^-$) is the classical Boxing inequality \eqref{eq:inte-boxing-whole}  and the other (letting $\delta\rightarrow 0^+$) yields $\sum_{i=1}^{\infty}r_i^{n}\leq C|U|$, which follows  directly from the definition of Lebesgue measure. Moreover, these asymptotic results  indicate that the factor $\delta(1-\delta)$ on the right-hand side of \eqref{frac-boxing-whole} is essentially sharp.

Replacing $\mathbb R^n$ by a domain $\Omega\subset \mathbb R^n$ in the definition of $P_\delta$, one obtains the relative fractional perimeter $P_\delta(U, \Omega)$ of a Lebesgue measurable set $U\subset \Omega$; see Section \ref{FP} for details.
The Boxing inequality related to the   relative fractional  perimeter was established by Ponce--Spector \cite[Proposition 4.5]{PS20}, reading as follows. Let $\Omega\subset \IR^n$ be a smooth domain  and $\delta,\gamma\in (0,1).$ Then there exists a constant $C=C(\gamma, n, \Omega)$  such that for every open set $U\subset \Omega$ with $|U|/|\Omega|\leq \gamma,$ one can find a covering $\{B(x_i,r_i)\}_{i=1}^{\infty}$ of $U$ for which
\begin{equation}\label{frac-boxing-local}
	\sum_{i=1}^{\infty}r_i^{n-\delta}\leq C(1-\delta)P_\delta(U,\Omega).
\end{equation}
Unlike \eqref{frac-boxing-whole}, this inequality \eqref{frac-boxing-local} contains no factor $\delta$ in the right-hand side, which is not surprising and can be seen from Maz'ya--Shaposhnikova \cite[Corollary 3]{MS02}.

In the Boxing inequality \eqref{frac-boxing-local}, $\Omega$ is assumed to be a smooth domain. Therefore, a very natural question is 
\begin{ques*}
	For which more general domains does a Boxing inequality of the form \eqref{frac-boxing-local} hold? 
\end{ques*}

It is expected that such an inequality holds for Lipschitz domains, which will be shown to be true in this paper. In fact, we consider John domains, which include smooth domains, Lipschitz domains and even certain cusp domains. We establish a Boxing inequality of the form \eqref{frac-boxing-local} on John domains and, in turn, show that John domains are essentially the
only
domains supporting this inequality. Moreover, using the established Boxing inequality, we derive several fractional Poincaré-type inequalities; notably, these are the first such inequalities to be proved with the Bourgain--Brezis--Mironescu (BBM) factor $1-\delta$  on John domains, and are new even for Lipschitz domains.
 
\subsection{{John domains}} {A bounded domain $\Omega\subset \IR^n$ is an {\it $s$-distance John domain}, $s\geq 1$, if there is a constant $C_J\geq 1$ and a distinguished point $x_J\in \Omega$, so that  each point $x\in \Omega$ can be joined to $x_J$ by a  rectifiable curve $\gamma: [0, 1]\rightarrow \Omega$, $\gamma(0)=x$, $\gamma(1)=x_J$ satisfying
\begin{equation}\label{def-John}
	C_J d(\gamma(t), \partial \Omega)> |\gamma(t)-x|^{s}
\end{equation}
for all $t\in [0, 1]$. The distinguished point $x_J$ is called the {\it John center} of $\Omega$ and the rectifiable curve $\gamma$ is called the {\it John curve} that joins $x$ with $x_J$. The constant $C_J$ in \eqref{def-John} is called the {\it John constant}. 
If we replace the right-hand side of \eqref{def-John} with the diameter (resp. the length) of the subcurve $\gamma[0, t]$, then we call \(\Omega\) an {\it $s$-diameter John domain} (resp. {\it $s$-length John domain}). }

By definition, it is clear that an $s$-length John domain is $s$-diameter John and an $s$-diameter John domain is $s$-distance John. If $s=1$, the converse implications hold and the domain is simply called a $1$-John domain; see \cite[Theorem 2.16]{NV91}. However, for $s>1$, the converse implications do not hold in general; see \cite{Guo-Koskela} and \cite[Section 6]{Guo17}.

John domains were introduced by Martio--Savas \cite{MS79}, named after F. John \cite{John61} who used this condition in studying elasticity. The term $s$-John domain, $s\geq 1$, was introduced by Smith--Stegenga \cite{SS90}. It is worth noting that smooth domains and  Lipschitz domains are $1$-John domains (cf. \cite[page 435]{HK98}) and cusp domains are $s$-length John domains for suitable $s$  {(cf. \cite[page 435]{HK98})}.

\subsection{Boxing inequality on John domains}  We first introduce the following notion of  {\it improved  relative fractional perimeter}. Let $\Omega\subset \mathbb R^n$ and let $\delta, \tau\in(0, 1)$.  For any Lebesgue measurable set $U\subset \Omega$, define 
$$\mathcal P_{\delta, \tau}(U, \Omega):=\int_\Omega \int_{\left\lvert x-y\right\rvert <\tau \dist(y,\partial\Omega)}\frac{\left\lvert \chi_U(x)-\chi_U(y)\right\rvert }{\left\lvert x-y\right\rvert^{n+\delta} }\,dx\,dy =[\chi_U]_{ \mathcal {W}^{\delta,1}(\Omega)},$$
where $[\chi_U]_{ \mathcal {W}^{\delta,1}(\Omega)}$ is the improved fractional Sobolev energy of $\chi_U$. See Section \ref{FP} for a short introduction of (improved) fractional Sobolev spaces and (improved) fractional perimeters and their relations. It is clear from the definition that $\mathcal P_{\delta, \tau}(U, \Omega)\leq P_\delta(U, \Omega)$.


The main goal of this paper is to establish the Boxing inequality on $s$-distance John domains and to broaden its scope of application. The result is stated as follows.

{
	\begin{thm}\label{geometric-boxing}
			Let $\delta, \tau, \gamma \in (0, 1)$. Assume that $\Omega\subset \IR^n$ is an $s$-distance John domain with $1\leq s\leq \frac{n}{n-\delta}$. Then there exists a positive constant $C=C(n, \tau, s, \gamma, C_J, \diam \Omega)$  such that for every  Lebesgue measurable set $U\subset \Omega$ with $|U|/|\Omega|\leq \gamma$,   one can find  a Lebesgue measure zero set $\mathcal{N}_{U}$ and a covering $\{B(x_i,r_i)\}_{i=1}^{\infty}$ of $U\setminus\mathcal{N}_{U}$ for which
		\begin{align}\label{intro-geometric-boxing}
				\sum_{i=1}^{\infty}r_i^{s(n-\delta)} \le C (1-\delta)\mathcal P_{\delta, \tau}(U,\Omega).
		\end{align} 
If  $U$ is an open set, then $\mathcal{N}_{U}=\emptyset$.
	\end{thm}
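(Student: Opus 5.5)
The plan is a Whitney-cube decomposition of $\Omega$, a local fractional isoperimetric estimate with the BBM factor on each cube, and the John curves to transport information from the John center to every density point of $U$.

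\textbf{Setup.} Let $\mathcal N_U$ be the set of points of $U$ that are not Lebesgue points of $\chi_U$. Then $|\mathcal N_U|=0$, and $\mathcal N_U=\emptyset$ when $U$ is open. Fix a Whitney decomposition $\mathcal W$ of $\Omega$ into dyadic cubes $Q$ with $\ell(Q)\approx \dist(Q,\partial\Omega)$. We dilate each cube to $Q^*=\lambda Q$, with $\lambda$ depending only on $\tau$. The point of this choice is that $x,y\in Q^*$ implies $|x-y|<\tau\dist(y,\partial\Omega)$, so that
\[
\sum_{Q}\int_{Q^*}\int_{Q^*}\frac{|\chi_U(x)-\chi_U(y)|}{|x-y|^{n+\delta}}\,dx\,dy\le C\,\mathcal P_{\delta,\tau}(U,\Omega),
\]
by bounded overlap. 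The basic local tool is the fractional Poincar\'e inequality on cubes with the BBM factor (Bourgain--Brezis--Mironescu / Maz'ya--Shaposhnikova type):
\[
\vint{Q^*}|\chi_U-(\chi_U)_{Q^*}|\le C(n)(1-\delta)\,\ell(Q)^{\delta-n}\int_{Q^*}\int_{Q^*}\frac{|\chi_U(x)-\chi_U(y)|}{|x-y|^{n+\delta}}\,dx\,dy .
\]
The constant is uniform in $\delta\in(0,1)$ after rescaling to a unit cube. If only $\delta$ near $1$ is delicate, I would prove it by averaging over directions, or cite it if the paper has it earlier.

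\textbf{Chain argument.} Let $Q_0$ be the Whitney cube containing $x_J$. Since $|U|\le\gamma|\Omega|$, one of two things happens.
\begin{itemize}
\item[(a)] If $(\chi_U)_{Q_0^*}\le \frac{1+\gamma}{2}$, we proceed to the chains below.
\item[(b)] If $(\chi_U)_{Q_0^*}>\frac{1+\gamma}{2}$, then a standard John-domain Poincar\'e-type argument, or simply the bound $|U\cap Q_0^*|\le|\Omega|$, gives a cube where the density is bounded below away from $1$. In that case $\mathcal P_{\delta,\tau}$ is bounded below by a constant times $(1-\delta)^{-1}$ via the local inequality. Then the trivial covering of $\Omega$ by one ball of radius $\diam\Omega$ already gives the estimate. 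This is exactly where the constant depends on $\gamma$ and $\diam\Omega$.
\end{itemize}
In case (a), take $x\in U\setminus\mathcal N_U$. Its density is $1$, so there are arbitrarily small Whitney-type cubes (or balls) around $x$ where the average of $\chi_U$ exceeds $\frac{3+\gamma}{4}$. Following the John curve from $x$ to $x_J$ gives a chain of Whitney cubes $Q_0,Q_1,\dots,Q_k$, finished by dyadic balls shrinking to $x$. Along this chain the average of $\chi_U$ jumps by at least $c(\gamma)$. Telescoping gives
\[
c(\gamma)\le\sum_j|(\chi_U)_{Q_j^*}-(\chi_U)_{Q_{j+1}^*}|\le C\sum_j (1-\delta)\,\ell(Q_j)^{\delta-n}E(Q_j),
\]
where $E(Q)$ denotes the local double integral. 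For this we need a geometric weight $\sum_j \ell(Q_j)^{\varepsilon}\lesssim |x-x_J|^{\varepsilon}$-type summability. Hence there is a cube $Q_j$ in the chain with
\[
\ell(Q_j)^{n-\delta+\varepsilon'}\lesssim(1-\delta)E(Q_j),
\]
or, in the alternative formulation, a comparison with $r_x:=|x-z_j|$.

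\textbf{Main obstacle: the $s$-John geometry.} The $s$-John condition gives $\ell(Q_j)\gtrsim |z_j-x|^{s}$ for the chain cube containing $\gamma(t)$. So the ball $B(x,C|z_j-x|)\supset Q_j$ satisfies
\[
r_x^{s(n-\delta)}\lesssim \ell(Q_j)^{n-\delta}\lesssim(1-\delta)E(Q_j).
\]
The delicate point is choosing the selection weights so that the sum over the chain converges uniformly in $\delta$. I would pick the cube maximizing $\ell(Q_j)^{\delta-n}E(Q_j)/\ell(Q_j)^{\eta}$ with a small fixed $\eta>0$. The chain hits each dyadic scale of $\ell$ boundedly often, with boundedness coming from $\diam\Omega$ and $C_J$. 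The restriction $s(n-\delta)\le n$ is needed so that the volume of the balls does not exceed what the cube energy controls when passing from $\ell(Q)^{n-\delta}$ to $r^{s(n-\delta)}$.

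\textbf{Covering.} Finally, apply the $5r$-covering lemma to the balls $B(x,r_x)$. The selected Whitney cubes $Q_j$ attached to the disjoint subfamily are not automatically distinct, since many $x$ can select the same cube. So I would instead group points by their selected cube $Q$: all $x$ selecting $Q$ lie in a ball of radius comparable to $(C_J\ell(Q))^{1/s}$. Then one ball per selected cube suffices, and bounded overlap of the $Q^*$ gives
\[
\sum_i r_i^{s(n-\delta)}\le C(1-\delta)\sum_Q E(Q)\le C(1-\delta)\,\mathcal P_{\delta,\tau}(U,\Omega).
\]
The small cubes near $x$ used at the end of the chain are also Whitney-subordinate, so they are covered by the same grouping.
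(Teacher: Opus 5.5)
\textbf{The selection step fails.} You telescope to $c(\gamma)\le C(1-\delta)\sum_j \ell(Q_j)^{\delta-n}E(Q_j)$ and then pick the cube maximizing $\ell(Q_j)^{\delta-n-\eta}E(Q_j)$. This breaks down for two reasons.

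First, the summability $\sum_j\ell(Q_j)^{\eta}\lesssim 1$ rests on the claim that the chain meets each dyadic scale boundedly often. That claim is false for $s>1$. In the model cusp $\{(x',x_n):0<x_n<1,\ |x'|<x_n^s\}$, which is $s$-John, any curve leaving the cusp from $(0,h)$ passes through about $h^{1-s}$ Whitney cubes of side $\approx h^s$ between heights $h$ and $2h$. Hence $\sum_j\ell(Q_j)^\eta$ is unbounded for every $\eta\le (s-1)/s$, so in particular for small $\eta$.

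Second, even when the weighted sum is bounded, the maximizing cube only satisfies $\ell(Q_j)^{n-\delta+\eta}\lesssim(1-\delta)E(Q_j)$. Your covering step needs $\ell(Q_j)^{n-\delta}\lesssim(1-\delta)E(Q_j)$. Since $x$ may lie at distance $\approx\ell(Q_j)^{1/s}$ from $Q_j$, any ball around $Q_j$ containing $x$ has $r^{s(n-\delta)}\gtrsim\ell(Q_j)^{n-\delta}$. The lost factor $\ell(Q_j)^{-\eta}$ is unbounded for small cubes, and the $s$-John geometry cannot absorb it. So no admissible radius $r_x$ is produced.

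\textbf{The missing idea.} For an indicator function you should not sum at all: a single set on which $U$ has density bounded away from $0$ and $1$ suffices. Along your chain the averages of $\chi_U$ go from $\le\frac{1+\gamma}{2}$ at $Q_0^*$ to $1$ in the limit at the density point $x$. Take the last chain set with average $\le\frac{3+\gamma}{4}$. Either its average lies in $[\frac14,\frac{3+\gamma}{4}]$, or it differs by at least $\frac12$ from the next set in the chain. In both cases a single set $T$, comparable to that chain element and still inside the region $|x-y|<\tau d(y)$, has $\vint_T|\chi_U-(\chi_U)_T|\ge c(\gamma)$. The BBM Poincar\'e inequality, or Lemma~\ref{lem:PS-lem}, then gives $\ell(T)^{n-\delta}\lesssim(1-\delta)E(T)$ with no loss.

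This is exactly what the paper does in Proposition~\ref{prop:sJ-box}, but continuously. For open $U$ disjoint from $B_0$, it uses the balls $B_t=B(\gamma(t),\tau_0|\gamma(t)-x|^s/(18C_J))$ and the intermediate value theorem to find a ball with $|U\cap B_t|=|B_t|/2$, then applies Lemma~\ref{lem:PS-lem} and a $5r$-covering. Measurable $U$ are then handled through the functional inequality, density of smooth functions in $\mathcal W^{\delta,1}(\Omega)$, and the fine-property result of Section~\ref{Chapter-4}. Your density-point route, once repaired this way, would be a more direct alternative to that machinery.

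\textbf{Further points to fix.}
\begin{itemize}
\item Case (b), as stated, gives no lower bound on $(1-\delta)\mathcal P_{\delta,\tau}(U,\Omega)$: a cube with density bounded away from $1$ may be tiny, or may have density near $0$. Instead, run case (a) for $\Omega\setminus U$, noting that $\mathcal P_{\delta,\tau}(\Omega\setminus U,\Omega)=\mathcal P_{\delta,\tau}(U,\Omega)$. Then use $|\Omega\setminus U|\ge(1-\gamma)|\Omega|$, $s(n-\delta)\le n$, and $|\Omega|\gtrsim((\diam\Omega)^s/C_J)^n$.
\item The tail of balls or dyadic subcubes shrinking to $x$ is not a bounded-overlap family. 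It needs a Vitali covering, not the Whitney grouping.
\item $\mathcal N_U$ must also contain the points of $U$ at which $\chi_U$ has Lebesgue value $0$.
\item For small $\tau$, dilating Whitney cubes by a factor $\lambda$ does not give $|x-y|<\tau d(y)$. You need a finer Whitney decomposition with $\ell(Q)\le c\tau\dist(Q,\partial\Omega)$.
\end{itemize}
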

}

If $\Omega$ is a smooth domain (hence a $1$-John domain) and $U\subset\Omega$ is open, then $\mathcal{N}_U=\emptyset$. In this case, since $\mathcal{P}_{\delta,\tau}(U,\Omega)\le P_\delta(U,\Omega)$, the Boxing inequality \eqref{frac-boxing-local} follows immediately from \eqref{intro-geometric-boxing}. We note that Ponce--Spector proved \eqref{frac-boxing-local} by contradiction without quantifying the constant $C$ in \eqref{frac-boxing-local}. We obtain a quantitative version of $C$ in \eqref{intro-geometric-boxing}.
In particular, since in this case the constant $C$ in \eqref{intro-geometric-boxing} is $C(n,\tau,\gamma,C_J)$ (see Section \ref{John and Lip} for the Boxing inequality on $1$-John domains), this reveals that
the constant $C$ in \eqref{frac-boxing-local} depends on $\Omega$ only through the John constant $C_J$.

In terms of the Hausdorff content $\mathcal H_\infty^{s(n-\delta)}$ (see Section \ref{Hausdorff}), the Boxing inequality \eqref{intro-geometric-boxing} is equivalent to
\begin{equation}\label{content-boxing}
	\mathcal H_\infty^{s(n-\delta)}\big(U\setminus\mathcal{N}_{U}\big) \le C (1-\delta)\mathcal P_{\delta, \tau}(U,\Omega). 
\end{equation}
Moreover, when $U$ is not open, it is necessary to have the Lebesgue measure zero set $\mathcal{N}_{U}$ in the Boxing inequalities \eqref{intro-geometric-boxing} and \eqref{content-boxing}; see the following example.

 \begin{exam}\label{exam-geometric}
Let $\Omega=(0,1)^n$ with $n\geq 2$. Then $\Omega$ is clearly a $1$-John domain. For any $\delta\in (0,1)$, there exists a compact Cantor-like set $U$ in  $\Omega$ such that $0<\mathcal H^{n-\delta}(U)<\infty$ (cf. \cite[Section 4.12]{Mattila}).  Since the Hausdorff content \(\mathcal{H}_\infty^{n-\delta}\) and the Hausdorff measure \(\mathcal{H}^{n-\delta}\) have the same negligible sets (cf. \cite[Exercise 8.6]{Heinonen}), we have $\mathcal H^{n-\delta}_{\infty}(U)>0$. However, $\mathcal{H}^{n-\delta}(U)<\infty$ implies $|U|=0$ and hence $P_{\delta,\tau}(U,\Omega)=0$. 
This shows that if one does not remove the Lebesgue measure zero set $\mathcal{N}_U$, the Boxing inequalities \eqref{intro-geometric-boxing} and \eqref{content-boxing} fail to hold in general.
\end{exam}

The Boxing inequality has powerful implications for the study of Sobolev functions, as can be seen from its functional formulation. To this end, one defines the Choquet integral of a function $f:\Omega\rightarrow \mathbb R$ with respect to the Hausdorff content $\mathcal H^{s(n-\delta)}_\infty$ {(for more details, see Section \ref{Hausdorff})} by 
\[
\int_{\Omega} |f| \, d\mathcal{H}_\infty^{ s(n-\delta)} := \int_0^\infty \mathcal{H}_\infty^{s(n-\delta)}(\{ |f| > t \}) \, dt.
\]
Then we state our functional formulation of the Boxing inequality.

	\begin{thm}\label{functional-boxing}
	Let $\delta, \tau \in (0, 1)$. Assume that $\Omega\subset \IR^n$ is an $s$-distance John domain with $1\leq s\leq \frac{n}{n-\delta}$. Then there exists a positive constant $C=C(n, \tau, s,  C_J, \diam \Omega)$  such that for every $f\in  L^1(\Omega)$, 
	\begin{align}\label{intro-func}
		\int_{\Omega}\left\lvert f^*-\vint{\Omega} f\right\rvert \,d\mathcal{H}_{\infty} ^{s(n-\delta)} \leq C(1-\delta)\int_\Omega \int_{\left\lvert x-y\right\rvert <\tau \dist(y,\partial\Omega)}\frac{\left\lvert f(x)-f(y)\right\rvert }{\left\lvert x-y\right\rvert^{n+\delta} }\,dx\,dy.
	\end{align}
\end{thm}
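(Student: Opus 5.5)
Our plan is to derive Theorem~\ref{functional-boxing} from the geometric Boxing inequality, Theorem~\ref{geometric-boxing} (in its content form \eqref{content-boxing}), via the coarea formula for the improved fractional energy and a median normalization. Here $f^*$ denotes the precise representative of $f$ (the limit of ball averages where it exists), so the exceptional sets $\mathcal N_U$ will be absorbed by working with $f^*$.

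\textbf{Step 1: reduce to the median.} Let $m$ be a median of $f$ on $\Omega$, so $|\{f>m\}|\le |\Omega|/2$ and $|\{f<m\}|\le|\Omega|/2$. Since $\mathcal H^{s(n-\delta)}_\infty(\Omega)\le C(\diam\Omega)$ and the Choquet integral is quasi-subadditive, we have
\[
\int_\Omega |f^*-\vint{\Omega} f|\,d\mathcal H^{s(n-\delta)}_\infty\le C\int_\Omega|f^*-m|\,d\mathcal H^{s(n-\delta)}_\infty + C\,|m-\vint{\Omega}f|.
\]
The constant term is controlled by $\vint{\Omega}|f-m|\,dx\le C\int_\Omega|f^*-m|\,d\mathcal H^{s(n-\delta)}_\infty$, using that Lebesgue measure is dominated by $\mathcal H^n_\infty\le C\mathcal H^{s(n-\delta)}_\infty$ on the bounded set $\Omega$ (note $s(n-\delta)\le n$). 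So it suffices to bound $\int_\Omega (f^*-m)_+\,d\mathcal H_\infty^{s(n-\delta)}$, and symmetrically the negative part.

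\textbf{Step 2: layer cake and coarea.} Write
\[
\int_\Omega (f^*-m)_+\,d\mathcal H^{s(n-\delta)}_\infty=\int_0^\infty \mathcal H^{s(n-\delta)}_\infty(\{f^*>m+t\})\,dt.
\]
For each $t>0$ put $U_t=\{f>m+t\}$, so $|U_t|\le|\Omega|/2$ and we may take $\gamma=1/2$. Applying \eqref{content-boxing} gives
\[
\mathcal H^{s(n-\delta)}_\infty(U_t\setminus\mathcal N_{U_t})\le C(1-\delta)\mathcal P_{\delta,\tau}(U_t,\Omega).
\]
The fractional coarea formula $\int_{\mathbb R}|\chi_{\{f>\lambda\}}(x)-\chi_{\{f>\lambda\}}(y)|\,d\lambda=|f(x)-f(y)|$, combined with Tonelli, yields $\int_0^\infty\mathcal P_{\delta,\tau}(U_t,\Omega)\,dt\le[f]_{\mathcal W^{\delta,1}(\Omega)}$. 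Adding the symmetric estimate for $\{f<m-t\}$ finishes the proof.

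\textbf{Main obstacle.} The key difficulty is that $\{f^*>m+t\}$ is not $U_t\setminus\mathcal N_{U_t}$: we must control the content of the set itself, not of a set modified by a null set. I would handle this by choosing the representatives so that the null set is removed consistently, namely by taking $U_t$ to be the set of Lebesgue density points of $\{f>m+t\}$. Inspecting the proof of Theorem~\ref{geometric-boxing}, $\mathcal N_U$ should consist exactly of non-density points, since the covering is built around density points through a stopping-time argument on ball averages. At a point where $f^*(x)>m+t$, the ball averages of $f$ exceed $m+t$, and a Chebyshev-type argument on small balls shows that $x$ is a point of positive upper density of $\{f>m+t'\}$ for every $t'<t$. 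So $\{f^*>m+t\}\subset U_{t'}\setminus\mathcal N_{U_{t'}}$ whenever the boxing proof applies to points of positive upper density, and integrating in $t$ with $t'\uparrow t$ loses nothing. If the proof instead requires genuine density points, I would fall back on a truncation and mollification approximation $f_k\to f$, using the lower semicontinuity of Choquet integrals along quasi-everywhere convergence. This step is where the real work lies.
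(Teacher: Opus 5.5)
Your median normalization, layer-cake and coarea steps are fine, but they are a reduction \emph{to} Theorem~\ref{geometric-boxing}, and in this paper that theorem is \emph{derived from} Theorem~\ref{functional-boxing}: one takes $f=\chi_U$ and sets $\mathcal N_U=\{\chi_U\neq\chi_U^*\}$. As written, your argument is therefore circular. At best it reproves the implication (1)$\Rightarrow$(2) of Proposition~\ref{prop-equivalence}.

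Your guess that the geometric inequality comes from a stopping-time covering built at density points is not how it is obtained. The only boxing estimate on $\Omega$ proved independently is Proposition~\ref{prop:sJ-box}, which needs $U$ open with $U\cap B_0=\emptyset$. The stopping time along the John curve exists only because $B_t\subset U$ for small $t$ (this uses openness) and the terminal ball $B_1\subset B_0$ misses $U$. Your level sets $U_t=\{f>m+t\}$ satisfy $|U_t|\le|\Omega|/2$, but they may contain $B_0$ together with a neighbourhood of the whole John curve. In that case the density ratio never drops to $1/2$ and no covering is produced. For $s>1$ the stopping time also cannot be started at a mere density point: the balls along the curve have radius comparable to $|\gamma(t)-x|^s\ll|\gamma(t)-x|$, so density one of $U$ at $x$ says nothing about them.

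The missing idea is how to control $f$ near the John centre. The paper does this for continuous $f$ in Theorem~\ref{thm:sJ-WPI}, using a cut-off $\phi$ equal to $1$ on $B_0$ and supported in $2B_0$ to split $f-f_{3B_0}=\phi(f-f_{3B_0})+(1-\phi)(f-f_{3B_0})$.
\begin{itemize}
\item The second piece vanishes on $B_0$, so its superlevel sets are open and avoid $B_0$. Proposition~\ref{prop:sJ-box} plus coarea then applies, and the error coming from $\phi$ is absorbed by the Bourgain--Brezis--Mironescu Poincar\'e inequality on $3B_0$.
\item The first piece is controlled by the Choquet-type fractional Poincar\'e inequality with factor $1-\delta$ on the ball $3B_0$ from \cite{MPW24}. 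A chaining change of variables then converts the full energy on $3B_0$ into the improved energy.
\end{itemize}
Only after this does the paper pass from $f_{3B_0}$ to $f_\Omega$ (Lemmas~\ref{lem:1J-SPI} and \ref{lem:1J-Fbox}). It then extends to $L^1$ via Proposition~\ref{prop:fracSob-dense}, Remark~\ref{rem-dense}, Theorem~\ref{thm:fine property}, and the increasing-sets property of $\widehat{\mathcal H}^{s(n-\delta)}_\infty$ combined with Fatou's lemma. Your fallback resembles this last step, but it still needs the inequality for the smooth approximants, that is, boxing for open level sets that may meet $B_0$, which is exactly the unproved part.

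A smaller issue: with $f^*$ defined as the limit of averages wherever it exists, your Chebyshev step fails. A thin set where $f$ is large can raise the averages without giving $\{f>m+t'\}$ positive density. You should define $f^*$ on the Lebesgue set $\mathcal L_f$, where $f^*(x)>m+t$ already makes $x$ a density-one point of $\{f>m+t\}$, so no $t'<t$ is needed. You then discard $\Omega\setminus\mathcal L_f$ using Theorem~\ref{thm:fine property}.
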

Here $f^*$ in \eqref{intro-func} is the precise representative of $f$ (see Section \ref{Chapter-4} for details) and $\vint{\Omega} f = \frac{1}{|\Omega|}\int_\Omega f\,dx$. If $f$ is continuous, then $f^* = f$ and \eqref{intro-func} holds with $f$ in place of $f^*$. For general $f \in L^1(\Omega)$, the inequality must be formulated using
the precise representative $f^*$. Indeed, let $\Omega$ and $U$ be the sets in Example \ref{exam-geometric} and take $f = \chi_U$. Then it is clear that $\vint{\Omega} f = 0$ and 
\[
\int_{\Omega}\left| f - \vint{\Omega} f \right| \, d\mathcal{H}_\infty^{n-\delta} = \mathcal{H}_\infty^{n-\delta}(U) > 0.
\]
In contrast, the right-hand side of \eqref{intro-func} equals $C(1-\delta)\mathcal P_{\delta,\tau}(U,\Omega)$, which is $0$. This shows that if one replaces $f^*$ with $f$, the Boxing inequality \eqref{intro-func} fails to hold in general.
On the other hand, 
for $f = \chi_U$ we have $f^* \equiv 0$ and hence \eqref{intro-func} holds. 

\begin{rem}
The Boxing inequality in Theorem \ref{geometric-boxing} is referred to as the geometric Boxing inequality and the one in Theorem \ref{functional-boxing} as the functional Boxing inequality. These two Boxing inequalities can be shown to be  equivalent to each other; see Proposition \ref{prop-equivalence}.
\end{rem}

\subsection{Fractional Poincar\'e-type inequalities} With the aid of the established Boxing inequality, we are able to derive several fractional Poincaré-type inequalities. The first one is the fractional Poincar\'e--Wirtinger trace inequality, which implies the fractional Hardy-type inequality and the fractional Sobolev--Poincar\'e inequality.
\begin{thm}\label{cor:1-john-Trace-intro}
	Let $\delta, \tau\in (0, 1),$ $1\leq s\leq \frac{n}{n-\delta}$, $1\leq q\leq \frac{n}{s(n-\delta)},$ $\alpha=n-qs(n-\delta)$, let $\Omega\subset \IR^n$ be an $s$-distance John domain and let $\mu$ be a Radon measure on $\IR^n$. Assume that there exists a constant $C_\mu\geq 1$ such that 
	\begin{align}\label{eq:growth-estimate-intro}
		\mu(B(x,r))\leq C_\mu r^{n-\alpha}
	\end{align}
	for any $B(x,r)\subset \IR^n$. Then there exists  a positive constant $C=C(n, \tau, s,C_\mu, C_J, \diam \Omega)$
	such that for every $f\in L^1(\Omega)$,
	\begin{align}
		\left\| f^{*} - \vint\Omega f \right\|_{L^q(\Omega, d\mu)}\leq C(1-\delta)\int_\Omega \int_{|x - y| < \tau \dist(y,\partial\Omega)}\frac{\left\lvert f(x)-f(y)\right\rvert }{\left\lvert x-y\right\rvert^{n+\delta} }\,dx\,dy.
	\end{align}
\end{thm}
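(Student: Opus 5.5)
The plan is to deduce the trace inequality from the functional Boxing inequality, Theorem \ref{functional-boxing}, by comparing the $L^q(\mu)$ norm with a Choquet integral against the Hausdorff content $\mathcal H^{s(n-\delta)}_\infty$. Set $g:=|f^*-\vint\Omega f|$ and $\beta:=s(n-\delta)$. The growth condition \eqref{eq:growth-estimate-intro} gives $n-\alpha=q\beta$, so $\mu(B(x,r))\le C_\mu r^{q\beta}$. Hence, for any set $E$ and any covering $\{B(x_i,r_i)\}$ of $E$,
\[
\mu(E)\le C_\mu\sum_i r_i^{q\beta}\le C_\mu\Big(\sum_i r_i^{\beta}\Big)^{q},
\]
using $q\ge1$ and $\ell^1\subset\ell^q$. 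Taking the infimum over coverings yields $\mu(E)\le C_\mu\big(\mathcal H^{\beta}_\infty(E)\big)^q$. I would apply this to the superlevel sets $E_t=\{g>t\}$. One point to check is that $f^*$ is defined, and Borel, at every point of $\Omega$ up to a set where it is defined by convention; since $\mu$ may charge Lebesgue-null sets, using $f^*$ rather than $f$ is essential here, exactly as in Theorem \ref{functional-boxing}.

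Next I convert this level-set estimate into a norm estimate, via the standard Choquet-integral analogue of the inequality $\|g\|_{L^q}\lesssim\|g\|_{L^{1,q}}$. One route is
\[
\|g\|_{L^q(\mu)}^q=q\int_0^\infty t^{q-1}\mu(E_t)\,dt\le C_\mu\, q\int_0^\infty t^{q-1}\mathcal H^\beta_\infty(E_t)^q\,dt .
\]
Since $t\mapsto \mathcal H^\beta_\infty(E_t)$ is nonincreasing, we have $t\,\mathcal H^\beta_\infty(E_t)\le\int_0^t\mathcal H^\beta_\infty(E_\sigma)\,d\sigma\le \int_\Omega g\,d\mathcal H^\beta_\infty=:I$. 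Therefore
\[
\int_0^\infty t^{q-1}\mathcal H^\beta_\infty(E_t)^q\,dt\le I^{q-1}\int_0^\infty \mathcal H^\beta_\infty(E_t)\,dt=I^q .
\]
This gives $\|g\|_{L^q(\mu)}\le (qC_\mu)^{1/q}I\le C\,I$.

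Finally I apply Theorem \ref{functional-boxing} to $I$. It requires $1\le s\le n/(n-\delta)$, which is assumed here, and it gives $I\le C(1-\delta)$ times the improved energy, with $C=C(n,\tau,s,C_J,\diam\Omega)$. Combining the three steps gives the claim with constant depending additionally on $C_\mu$; the constant $q^{1/q}\le e^{1/e}$ is harmless.

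The main obstacle is the first step: the set-function comparison must hold for arbitrary (non-open) level sets of $f^*$, and one has to use the definition of $\mathcal H^\beta_\infty$ by arbitrary ball coverings (the same definition used in Theorem \ref{functional-boxing}) together with the fact that $\mu$ is an outer measure. If the paper's content uses dyadic cubes instead, I would insert a dimensional comparison constant at that point. The deep analytic input is entirely contained in Theorem \ref{functional-boxing}; the present theorem is a layer-cake corollary of it.
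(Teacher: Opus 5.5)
Your proof is correct and follows essentially the same route as the paper. A covering argument with the growth bound gives $\mu(E)\lesssim \mathcal{H}^{s(n-\delta)}_\infty(E)^q$. A layer-cake/monotonicity step then reduces the $L^q(\mu)$ norm to the Choquet integral; the paper packages this as Lemma \ref{lem:q-integral} and applies it to $\mu$ before the comparison rather than after. Theorem \ref{functional-boxing} finishes the argument.

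The measurability point you flag is settled as in the paper. Assume $f\in\mathcal{W}^{\delta,1}(\Omega)$, since otherwise the right-hand side is infinite. Then Theorem \ref{thm:fine property} gives $\mathcal{H}^{s(n-\delta)}_\infty(\Omega\setminus\mathcal{L}_f)=0$, so your own comparison inequality yields $\mu(\Omega\setminus\mathcal{L}_f)=0$. Hence $f^*$, which is Borel on $\mathcal{L}_f$, is $\mu$-measurable.
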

The Poincaré--Wirtinger trace inequality was first introduced by Meyers--Ziemer \cite[Theorem 4.10]{MZ77}. Subsequently, Ponce--Spector \cite[Theorem 1.6]{PS20} studied such inequalities in the fractional setting on smooth domains. 
It is worth noting that Meyers--Ziemer studied the case of the $L^{n/(n-1)}$-norm, whereas Ponce--Spector studied the case of the $L^1$-norm. Recently, Myyryläinen--Pérez--Weigt \cite[Corollary 4.2]{MPW24} replaced the smooth domain in Ponce--Spector's result with a cube $Q\subset\mathbb{R}^n$ and investigated the case of the $L^{q}$-norm $(1\le q\le \frac{n}{n-\delta})$, thereby obtaining a fractional Poincaré--Wirtinger trace inequality for a wider range of exponents,  with the BBM factor $1-\delta$. It is straightforward to see that Theorem \ref{cor:1-john-Trace-intro} generalizes the results of Ponce--Spector and of Myyryläinen--Pérez--Weigt to John domains.

Our  first Corollary is a special case of Theorem \ref{cor:1-john-Trace-intro} and we refer to it as a {\it fractional Hardy-type inequality}. To the best of our knowledge, this inequality first appeared in Maz'ya--Shaposhnikova \cite{MS02}. Its original proof is valid for the cases of \(\mathbb{R}^n\) and cubes. 
Here we generalize the result to $s$-John domains, 
of which cubes are  specific cases.
\begin{cor}\label{cor:1-john-hardy-intro}
Let $\delta, \tau \in (0, 1),$ $1\leq s\leq \frac{n}{n-\delta}$, $1\leq q\leq \frac{n}{s(n-\delta)}$ and $\alpha=n-sq(n-\delta)$. Suppose that $\Omega\subset \IR^n$ is an $s$-distance John domain.
	There exists  a constant $C=C(n, \tau, s, C_J,\diam \Omega)$ such that the inequality
	\begin{align}
		\left(	\int_{\Omega}|f(x)|^q\frac{\,dx}{|x|^{\alpha}}\right)^{1/q}\leq C(1-\delta)\int_\Omega \int_{|x - y| < \tau \dist(y,\partial\Omega)}\frac{\left\lvert f(x)-f(y)\right\rvert }{\left\lvert x-y\right\rvert^{n+\delta} }\,dx\,dy
	\end{align}
	holds for every $f\in L^{1}(\Omega)$ with $\vint\Omega f =0.$
\end{cor}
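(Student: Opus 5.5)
Corollary~\ref{cor:1-john-hardy-intro} will follow from Theorem~\ref{cor:1-john-Trace-intro} by choosing the measure $d\mu(x)=\chi_\Omega(x)|x|^{-\alpha}\,dx$. Two things remain: checking the growth condition \eqref{eq:growth-estimate-intro} for this $\mu$ with a constant $C_\mu$ depending only on $n,s,\diam\Omega$, and replacing $f^*$ by $f$ on the left-hand side.

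\emph{Growth condition.} Note that $0\le \alpha = n - sq(n-\delta) < n$, since $q\le \frac{n}{s(n-\delta)}$ and $sq(n-\delta)\ge n-\delta>0$. Fix a ball $B(x,r)$. If $r\ge |x|/2$, then $B(x,r)\subset B(0,3r)$ and
\[
\mu(B(x,r))\le \int_{B(0,3r)}|y|^{-\alpha}\,dy=\frac{n\omega_n}{n-\alpha}(3r)^{n-\alpha}.
\]
If $r<|x|/2$, then $|y|\ge |x|/2> r$ on $B(x,r)$, so $\mu(B(x,r))\le \omega_n r^n r^{-\alpha}$.

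The factor $\frac{1}{n-\alpha}=\frac{1}{sq(n-\delta)}\le \frac{1}{n-\delta}\le \frac{1}{n-1}$ is bounded by a constant depending only on $n$, so it is harmless uniformly in $\delta$. Hence $C_\mu=C(n)$, and the boundedness of $\Omega$ is not even needed here. The normalization $C_\mu\ge1$ is arranged by enlarging the constant.

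\emph{Removing the precise representative.} Since $\mu$ is absolutely continuous with respect to Lebesgue measure, and $f^*=f$ almost everywhere (Lebesgue differentiation, at Lebesgue points), we get $\|f^*-\vint\Omega f\|_{L^q(\Omega,d\mu)}=\|f\|_{L^q(\Omega,d\mu)}$ when $\vint\Omega f=0$. This is exactly the left-hand side of the corollary.

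Theorem~\ref{cor:1-john-Trace-intro} then gives the claim with $C=C(n,\tau,s,C_J,\diam\Omega)$. The main point needing care is the uniformity of $C_\mu$ in $\delta$, specifically that $n-\alpha$ stays bounded away from zero; the computation above shows it does.
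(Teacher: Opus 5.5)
Your proposal is correct and follows the paper's proof: apply Theorem~\ref{cor:1-john-Trace-intro} with $d\mu=|x|^{-\alpha}dx$ and verify the growth condition with $C_\mu=C(n)$, using $n-\alpha\ge n-\delta>n-1$ to keep the constant uniform in $\delta$. The differences are minor. The paper gets the growth bound in one line from $\int_{B(x,r)}|y|^{-\alpha}\,dy\le\int_{B(0,r)}|y|^{-\alpha}\,dy$ (the integrand is radially decreasing), where you use a two-case split, and it leaves implicit the replacement of $f^*$ by $f$, which you justify via $\mu\ll$ Lebesgue measure.
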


Note that in Theorem \ref{cor:1-john-Trace-intro}, choosing
 $q = \frac{n}{s(n-\delta)}$ implies $\alpha = 0$. In this case, taking the Radon measure $\mu$ as the Lebesgue measure yields the classical fractional Sobolev--Poincaré inequality, which contains the BBM factor $1-\delta$  and extends the result of Bourgain--Brezis--Mironescu \cite[Theorem 1]{BBM02} to John domains. We state this as follows.
 
 \begin{cor}\label{intro-s-john-FSPI}
 	Let $\delta, \tau\in (0, 1)$, $1\leq s\leq \frac{n}{n-\delta}$ and let $\Omega\subset \IR^n$ be an $s$-distance John domain. Then there exists  a positive constant $C=C(n, s, \tau,C_J, \diam \Omega)$ such that for every $f\in L^{1}(\Omega)$,
 	\begin{align}\label{intro-s-john-FSPI-eq}
 		\left\| f - \vint\Omega f \right\|_{L^{\frac{n}{s(n-\delta)}}(\Omega)}\leq C(1-\delta)\int_\Omega \int_{\left\lvert x-y\right\rvert <\tau \dist(y,\partial\Omega)}\frac{\left\lvert f(x)-f(y)\right\rvert }{\left\lvert x-y\right\rvert^{n+\delta} }\,dx\,dy.
 	\end{align}
 \end{cor}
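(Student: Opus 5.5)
Corollary \ref{intro-s-john-FSPI} follows by specializing Theorem \ref{cor:1-john-Trace-intro}. Take $q=\frac{n}{s(n-\delta)}$. This is admissible because $1\le s\le \frac{n}{n-\delta}$ gives $s(n-\delta)\le n$, so $q\ge 1$, and $q$ is exactly the upper endpoint of the allowed range $1\le q\le \frac{n}{s(n-\delta)}$. With this choice $\alpha=n-qs(n-\delta)=0$. Let $\mu$ be Lebesgue measure $\mathcal L^n$ on $\IR^n$. Then
\[
\mu(B(x,r))=\omega_n r^n=\omega_n r^{n-\alpha},
\]
so the growth condition \eqref{eq:growth-estimate-intro} holds with $C_\mu=\max\{1,\omega_n\}$, which depends only on $n$. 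Theorem \ref{cor:1-john-Trace-intro} then gives
\[
\Big\| f^*-\vint\Omega f\Big\|_{L^{\frac{n}{s(n-\delta)}}(\Omega)}\le C(1-\delta)\int_\Omega\int_{|x-y|<\tau\dist(y,\partial\Omega)}\frac{|f(x)-f(y)|}{|x-y|^{n+\delta}}\,dx\,dy,
\]
where $C=C(n,\tau,s,C_J,\diam\Omega)$, since the dependence on $C_\mu$ has been absorbed into the dependence on $n$.

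It remains to replace $f^*$ by $f$ on the left. Since $f\in L^1(\Omega)\subset L^1_{\mathrm{loc}}(\Omega)$, the Lebesgue differentiation theorem shows that $f^*=f$ at every Lebesgue point of $f$, that is, $\mathcal L^n$-almost everywhere in $\Omega$. The $L^q$ norm with respect to Lebesgue measure ignores null sets, so
\[
\|f^*-\vint\Omega f\|_{L^q(\Omega)}=\|f-\vint\Omega f\|_{L^q(\Omega)},
\]
and this yields \eqref{intro-s-john-FSPI-eq}.

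There is no real obstacle. The only points to check are that the endpoint exponent $q=\frac{n}{s(n-\delta)}$ is indeed covered by Theorem \ref{cor:1-john-Trace-intro} (it is, since $s\le \frac{n}{n-\delta}$) and that the constants do not depend on $\delta$. The constant from Theorem \ref{cor:1-john-Trace-intro} is independent of $\delta$, and $C_\mu$ depends only on $n$, so the BBM factor $1-\delta$ is preserved.
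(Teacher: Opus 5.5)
Your proposal is correct and is exactly the paper's argument: the corollary is obtained from Theorem \ref{cor:1-john-Trace-intro} by taking $q=\frac{n}{s(n-\delta)}$ (so that $\alpha=0$) and letting $\mu$ be Lebesgue measure. You also fill in two details the paper leaves implicit: choosing $C_\mu=\max\{1,\omega_n\}$ so that the requirement $C_\mu\ge 1$ holds, and replacing $f^*$ by $f$ because they agree at every Lebesgue point, hence almost everywhere.
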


 \begin{rem}
The classical fractional Sobolev--Poincaré inequality without the BBM factor $1-\delta$ was proved by Guo \cite[Theorem 1.2]{Guo17}. With the aid of the BBM factor $1-\delta$, the fractional Sobolev--Poincaré inequality recovers the Sobolev--Poincaré inequality (with $p=1$) of Haj\l asz-Koskela \cite[Corollary 5]{HK98} on $s$-John domains. This is due to the Bourgain-Brezis-Mironescu formula on arbitrary domains (cf. \cite[Theorem 1]{M24}), which states that for $f\in W^{1,1}(\Omega)$,
\begin{align}\label{intoBBM-ar}
	\lim_{\delta\rightarrow 1^-} (1 - \delta) \int_{\Omega} \int_{|x - y| < \tau \operatorname{dist}(y, \partial \Omega)} \frac{|f(y) - f(x)|}{|x - y|^{n + \delta}} \, dx\,dy = C(n) \int_{\Omega} |Df(x)| \,dx.
\end{align}
Letting $\delta\to 1^-$ in \eqref{intro-s-john-FSPI-eq}, we obtain the result of Haj\l{}asz--Koskela \cite[Corollary 5]{HK98}. 

Moreover, we not only recover the results of Hajłasz--Koskela \cite[Corollary 5]{HK98} and Guo \cite[Theorem 1.2]{Guo17} (which were originally established on $s$-length John domains), but also extend them to $s$-distance John domains.
\end{rem}
	
Applying \eqref{intoBBM-ar} to the fractional Poincaré--Wirtinger trace inequality in Theorem \ref{cor:1-john-Trace-intro}, with some nontrivial effort, we obtain the following Poincaré--Wirtinger trace inequality on $s$-distance John domains, which extends the result of Meyers--Ziemer \cite[Theorem 4.10]{MZ77}. For  $f\in L^{1}(\Omega),$ let $\| Df \|(\Omega)$ denote the total variation of $f$ in $\Omega$;  see Section \ref{Chapter-6}.
\begin{cor}\label{intro:s-john-gFPI}
Let $1\leq s\leq \frac{n}{n-1}$, $1\leq q\leq \frac{n}{s(n-1)}$, $\alpha=n-qs(n-1)$, let $\Omega\subset \IR^n$ be an $s$-distance John domain and let $\mu$ be a Radon measure on $\IR^n$ satisfying \eqref{eq:growth-estimate-intro}. Then there exists  a positive constant $C=C(n, s, C_\mu, C_J, \diam \Omega)$ such that for every $f\in L^{1}(\Omega)$,
	\begin{align}\label{introeq:smooth-1}
		\left\lVert f^{*}-\vint{\Omega}f\right\rVert _{L^{q}(\Omega,d\mu)} \leq C \left\lVert Df\right\rVert (\Omega). 
	\end{align}
\end{cor}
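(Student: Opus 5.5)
We derive Corollary \ref{intro:s-john-gFPI} from Theorem \ref{cor:1-john-Trace-intro} by letting $\delta\to 1^-$ and using the BBM formula \eqref{intoBBM-ar}. The admissible parameters have to be matched with $\delta$. Fix $q$ with $1\le q\le \frac{n}{s(n-1)}$ and $\alpha=n-qs(n-1)$. For $\delta<1$ we have $s\le \frac{n}{n-1}\le\frac{n}{n-\delta}$, and also $q\le \frac{n}{s(n-1)}\le \frac{n}{s(n-\delta)}$, so the exponents are admissible in Theorem \ref{cor:1-john-Trace-intro}. The only issue is that the theorem ties the exponent to $\alpha_\delta=n-qs(n-\delta)$, whereas $\mu$ satisfies growth of order $n-\alpha=qs(n-1)$. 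For $r\le \diam\Omega$ we have
\[
\mu(B(x,r))\le C_\mu r^{qs(n-1)}\le C_\mu (\diam\Omega)^{qs(1-\delta)}\, r^{qs(n-\delta)} .
\]
This bound is uniform for $\delta\in(1/2,1)$. Since only balls meeting $\Omega$ with radius at most comparable to $\diam\Omega$ matter (we may restrict $\mu$ to $\Omega$), the hypothesis \eqref{eq:growth-estimate-intro} holds with exponent $n-\alpha_\delta$ and a constant independent of $\delta$. If necessary, we check in the proof of Theorem \ref{cor:1-john-Trace-intro} that only such balls are used.

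Consider first $f\in W^{1,1}(\Omega)$. Theorem \ref{cor:1-john-Trace-intro} gives, with $C$ uniform in $\delta\in(1/2,1)$ (the constant depends only on $n,\tau,s,C_\mu,C_J,\diam\Omega$),
\[
\Bigl\|f^*-\vint\Omega f\Bigr\|_{L^q(\Omega,d\mu)}\le C(1-\delta)\int_\Omega\int_{|x-y|<\tau\dist(y,\partial\Omega)}\frac{|f(x)-f(y)|}{|x-y|^{n+\delta}}\,dx\,dy .
\]
The left side does not depend on $\delta$. Letting $\delta\to1^-$ and using \eqref{intoBBM-ar} yields \eqref{introeq:smooth-1} with $\|Df\|(\Omega)=\int_\Omega|Df|$.

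For general $f\in L^1(\Omega)$ we may assume $\|Df\|(\Omega)<\infty$, so $f\in BV(\Omega)$. This extension is the main obstacle, because $f^*$ is defined pointwise and $\mu$ can charge sets of Lebesgue measure zero. Take smooth $f_k\in C^\infty(\Omega)\cap W^{1,1}(\Omega)$ with $f_k\to f$ in $L^1$ and $\|Df_k\|(\Omega)\to\|Df\|(\Omega)$ (Anzellotti--Giaquinta approximation); then $\vint\Omega f_k\to\vint\Omega f$. The task is then to show
\[
\Bigl\|f^*-\vint\Omega f\Bigr\|_{L^q(\mu)}\le\liminf_{k\to\infty} \Bigl\|f_k-\vint\Omega f_k\Bigr\|_{L^q(\mu)} .
\]
First, since $\mu(B(x,r))\lesssim r^{n-\alpha}$ with $n-\alpha\ge s(n-1)\ge n-1$, $\mu$ does not charge sets of $\mathcal H^{n-1}$-measure zero. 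Second, the BV approximation can be arranged so that $f_k\to f^*$ $\mathcal H^{n-1}$-a.e. outside the jump set. On the jump set, $f^*$ is the average of the two one-sided traces, and pointwise convergence of mollifier-type approximants to this average also holds (mollifications converge to $f^*$ at every point). Fatou's lemma then gives the inequality.

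If this pointwise control fails, a fallback is to apply the $W^{1,1}$ case on subdomains, or to use the coarea formula. The coarea approach reduces to characteristic functions $f=\chi_E$ via the layer-cake argument behind Theorem \ref{functional-boxing}. For those, the Boxing inequality on level sets in the limit $\delta\to1$ controls $\mathcal H^{s(n-1)}_\infty$ of the sets $\{f^*>t\}$, and the growth of $\mu$ converts Hausdorff content into $\mu$-measure.
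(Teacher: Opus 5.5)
\textbf{There is a genuine gap: your parameter matching is reversed, and this breaks the argument at its first step.}

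Since $\delta<1$ we have $n-\delta>n-1$. Hence $\frac{n}{n-\delta}<\frac{n}{n-1}$ and $\frac{n}{s(n-\delta)}<\frac{n}{s(n-1)}$, the opposite of what you wrote. So for $s=\frac{n}{n-1}$, or for the endpoint $q=\frac{n}{s(n-1)}$, Theorem~\ref{cor:1-john-Trace-intro} cannot be applied with your $q$ for any $\delta<1$.

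More seriously, with $q$ fixed the theorem requires $\mu(B(x,r))\lesssim r^{qs(n-\delta)}$, and $qs(n-\delta)>qs(n-1)$. For $r\le\diam\Omega$ the correct comparison is $r^{qs(n-\delta)}\le(\diam\Omega)^{qs(1-\delta)}r^{qs(n-1)}$, which points the wrong way for you. The failure occurs as $r\to0$, so restricting to balls of radius $\lesssim\diam\Omega$ does not help.

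In fact the fixed-$q$ fractional inequality you want to start from is false for every $\delta<1$. Take $q=s=1$, an open flat $(n-1)$-disc $S\Subset\Omega$, and $\mu(A)=\mathcal H^{n-1}(A\cap S)$; this $\mu$ satisfies \eqref{eq:growth-estimate-intro} with $n-\alpha=n-1$. Let $f=\chi_{E_\epsilon}$ with $E_\epsilon=S\times(-\epsilon,\epsilon)$.
\begin{itemize}
\item Every point of $S$ is interior to $E_\epsilon$, so $f^*=1$ on $S$, and the left-hand side equals $(1-|E_\epsilon|/|\Omega|)\,\mathcal H^{n-1}(S)\to\mathcal H^{n-1}(S)>0$.
\item On the other side, $\mathcal P_{\delta,\tau}(E_\epsilon,\Omega)\le P_\delta(E_\epsilon)\lesssim_\delta \text{Per}(E_\epsilon)^{\delta}|E_\epsilon|^{1-\delta}\lesssim_S\epsilon^{1-\delta}\to0$.
\end{itemize}
This is the familiar fact that $W^{\delta,1}$ functions have no traces on hypersurfaces. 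So the scheme ``fix $q$, let $\delta\to1^-$'' has no valid starting inequality.

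\textbf{The missing idea} is to let the exponent move with $\delta$ so that the growth order of $\mu$ stays unchanged.
\begin{itemize}
\item \emph{Case $1\le s<\frac{n}{n-1}$ and $q>1$.} Put $q_\delta=\frac{q(n-1)}{n-\delta}$. Then $n-q_\delta s(n-\delta)=\alpha$, so the given $\mu$ is admissible. Moreover $1\le q_\delta\le\frac{n}{s(n-\delta)}$ and $s\le\frac{n}{n-\delta}$ once $\delta$ is close to $1$. Apply Theorem~\ref{cor:1-john-Trace-intro} in $L^{q_\delta}(\mu)$, let $\delta\to1^-$ using \eqref{intoBBM-ar}, and pass from $L^{q_\delta}$ to $L^q$ by Fatou.
\item \emph{Case $q=1$.} Now $q_\delta<1$, which is inadmissible. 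The paper replaces $\mu$ by its dyadic averages $\mu_k$ at scale $2^{-k}$. These have growth of order $ps(n-1)$ for $p>1$, with constant $\lesssim 2^{k(\alpha-\alpha_p)}$, where $\alpha_p=n-ps(n-1)$. Apply the $q>1$ case with exponent $p$, let $p\to1^+$ (which kills the factor $2^{k(\alpha-\alpha_p)}$), and then let $k\to\infty$ using $\mu(U)\le\liminf_k\mu_k(U)$ for open $U$.
\item \emph{Case $s=\frac{n}{n-1}$.} This forces $q=1$ and $\alpha=0$, so $\mu$ is dominated by Lebesgue measure. It cannot be reached from the fractional theorem at all, and is handled directly by the Haj\l asz--Koskela $L^1$-Poincar\'e inequality.
\end{itemize}

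Your final BV approximation step is essentially the paper's density argument and is fine.
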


For special domains ($1$-John domains and Lipschitz domains), all the inequalities hold, with constants independent of $s$ and $\diam\Omega$, because $s=1$ in this setting and the inequalities are scaling invariant. We refer to Section~\ref{John and Lip} for the discussion.

\subsection{John characterizations} Up until now, we have considered Boxing inequalities on John domains and derived various types of Poincaré inequalities from them. It turns out that
a converse direction also holds; we show that John domains are essentially the only domains supporting such inequalities.  To this end, we first introduce
a mild geometric condition called the separation property.

A domain $\Omega \subset \mathbb{R}^n$  is said to have the {\it separation property} if there exist  a distinguished point $x_0\in \Omega$ and a constant $C_0\geq 1$ such that the following property holds: For every $x \in \Omega$, there exists a curve $\gamma : [0,1] \to \Omega$ with $\gamma(0) = x$, $\gamma(1) = x_0$, such that for each $t$, either
\[
\gamma([0, t]) \subset B_t := B(\gamma(t), C_0 d(\gamma(t), \partial\Omega))
\]
or each $y \in \gamma([0, t]) \setminus B_t$ and $x_0$ belongs to different components of $\Omega \setminus \partial B_t$. A simply connected domain on the plane automatically has the separation property. In $\mathbb R^n$ with $n\geq 3$, any domain that is quasiconformally equivalent to a uniform domain has the separation property. We refer to \cite{BP95} for more details.  

Assuming the separation property, Buckley--Koskela \cite{BP95} showed that a bounded domain supporting the Sobolev--Poincaré inequality is a John domain. Subsequently, Guo \cite[Theorem 1.4]{Guo17} extended this result to bounded domains supporting fractional Sobolev--Poincaré inequalities.

In the following, we show that the same conclusion remains valid for bounded domains supporting the Boxing inequality.
\begin{thm}\label{Box-John}
Let $\Omega\subset\IR^n$ be a bounded domain that satisfies the separation property. Let $\delta,\tau\in (0,1)$ and $n-\delta\leq \beta< n$. Suppose that $\Omega$ supports the following  Boxing inequality:  for any $\gamma\in (0,1)$, there exists a constant $C>0$ such that for every open set $U\subset \Omega$ with $|U|/|\Omega|\leq \gamma$, 
\begin{equation}\label{box-in-john}
\mathcal{H}_\infty^{\beta} (U)\leq C(1-\delta) \mathcal P_{\delta, \tau}(U,\Omega).
\end{equation}
Then $\Omega$ is an $s$-diameter John domain with $s=\delta\beta/\big((n-\delta)(n-\beta)\big)$.
\end{thm}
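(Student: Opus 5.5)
We follow the strategy of Buckley--Koskela \cite{BP95} and Guo \cite[Theorem 1.4]{Guo17}. By \cite{BP95}, under the separation property it suffices to verify a \emph{ball-separation} condition. Concretely, we aim to show the following: for each $x\in\Omega$, let $\gamma$ be the curve from the separation property joining $x$ to $x_0$. Then for every $t$ we have
\[
\diam\gamma([0,t])\le C\, d(\gamma(t),\partial\Omega)^{1/s}.
\]
This yields the $s$-diameter John condition, with John center $x_0$ after the usual adjustment near $x_0$.

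Fix $t$ and write $r=d(\gamma(t),\partial\Omega)$ and $B_t=B(\gamma(t),C_0r)$. If $\gamma([0,t])\subset B_t$, the bound is trivial, since $r\lesssim r^{1/s}$ for $r\le\diam\Omega$ and $s\ge 1$. Otherwise pick $y\in\gamma([0,t])\setminus B_t$ with $|y-\gamma(t)|\gtrsim\diam\gamma([0,t])$. Let $U$ be the union of the components of $\Omega\setminus\partial B_t$ that do not contain $x_0$; this set contains $y$. We may assume $r$ is small, since otherwise the bound is trivial by boundedness. One then shows $|U|/|\Omega|\le\gamma_0<1$, for instance by noting that a fixed ball $B(x_0,d(x_0,\partial\Omega)/2)$ lies outside $U$ once $C_0r$ is small. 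The set $U$ is open. Moreover $\Omega\cap\partial U\subset \partial B_t\cap\Omega$, which lies inside $B(\gamma(t),C_0r)$.

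\emph{Lower bound.} Write $d:=\diam\gamma([0,t])$. Since $U$ contains $y$ and also points of $\Omega$ at distance comparable to $d$ from $\partial B_t$, we seek a lower bound for $\mathcal H^\beta_\infty(U)$. To obtain one, we will use the $\beta$-Hausdorff content of a ball $B(y,d(y,\partial\Omega))\subset U$, which gives only $d(y,\partial\Omega)^\beta$. This is where the main difficulty lies: we need a genuine lower bound in terms of $d$. Following Guo, we will instead consider the piece of $\gamma$ lying in $U$ together with a Whitney-type chain of balls along it. Alternatively, we will apply the inequality \eqref{box-in-john} iteratively to the subcomponents $U_k=U\setminus \overline{B(\gamma(t),2^kC_0r)}$. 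The goal of either route is a bound of the form $\mathcal H^\beta_\infty(U)\gtrsim \min\{|U|^{\beta/n},\ldots\}$, obtained through the isoperimetric-type volume growth forced by \eqref{box-in-john}. Combining the Boxing inequality with the trivial bound $\mathcal H^\beta_\infty(E)\gtrsim|E|^{\beta/n}$ gives
\[
|U|^{\beta/n}\lesssim (1-\delta)\mathcal P_{\delta,\tau}(U,\Omega).
\]
Iterating this over dyadic annuli, as in the standard proof that an isoperimetric inequality implies volume growth, should give $|U|\gtrsim d^{\,n}$ up to exponents. The exponent bookkeeping is where $s=\delta\beta/((n-\delta)(n-\beta))$ should emerge.

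\emph{Upper bound.} Because $\chi_U$ jumps only across $\partial B_t\cap\Omega$, the integrand in $\mathcal P_{\delta,\tau}(U,\Omega)$ vanishes unless both $x$ and $y$ lie within $\tau\,d(y,\partial\Omega)$ of that sphere piece. Hence
\[
\mathcal P_{\delta,\tau}(U,\Omega)\lesssim \frac{P_\delta(B_t)}{1-\delta}\cdot(1-\delta)\lesssim \frac{r^{n-\delta}}{1-\delta},
\]
using the fractional perimeter of a ball and $\int_{|h|<\rho}|h|^{1-n-\delta}\,dh\sim\rho^{1-\delta}/(1-\delta)$. The factor $1-\delta$ then cancels. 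Comparing the two bounds yields $d^{\,\theta_1}\lesssim r^{\theta_2}$, hence $d\lesssim r^{1/s}$ with the stated $s$. The main obstacle is the lower bound: extracting a quantitative lower estimate of $\mathcal H^\beta_\infty(U)$, or of $|U|$, in terms of $\diam\gamma([0,t])$ from the Boxing inequality alone via the dyadic iteration. This is the step I would work out first, mimicking Guo's argument with $|U|^{(n-\delta)/n}$ replaced by $\mathcal H^\beta_\infty(U)$.
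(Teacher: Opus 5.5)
Your setup is fine: the choice of $U$, the bound $|U|/|\Omega|\le\gamma_0$, and the trivial case $\gamma([0,t])\subset B_t$. But the proposal stops short of a proof. The estimate you yourself call the main obstacle is only announced ("should give", "should emerge"): a lower bound for $|U|$ (or $\mathcal H^\beta_\infty(U)$) in terms of $d=\diam\gamma([0,t])$. That estimate is the entire substance of the Buckley--Koskela/Guo argument.

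The target is also wrong. $|U|\gtrsim d^{\,n}$ cannot come out of the hypothesis when $\beta>n-\delta$; near the tip of a suitable outward power cusp the volume is $\ll d^{\,n}$. What can be proved at the set level is a measure-density estimate centred at $y$. Set $V(\rho)=|U\cap B(y,\rho)|$ and assume $3\rho\le\dist(y,\partial B_t)$. Apply \eqref{box-in-john} together with $|E|^{\beta/n}\lesssim\mathcal H^\beta_\infty(E)$ to the open sets $U\cap B(y,\rho')$, and average over $\rho'\in(\rho,2\rho)$. Take any pair $(a,b)$ with $|a-b|<\tau d(b)$ joining $U$ to $\Omega\setminus U$. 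Its segment $[a,b]\subset B(b,\tau d(b))\subset\Omega$ must meet $\partial B_t$, because $U$ is a union of components of $\Omega\setminus\partial B_t$. This gives $V(\rho)^{\beta/n}\lesssim\delta^{-1}\rho^{-\delta}V(2\rho)$, and iterating yields $V(\rho)\gtrsim\rho^{n\delta/(n-\beta)}$. It is this exponent, combined with $|U|^{\beta/n}\lesssim r^{n-\delta}$, that produces $s=\delta\beta/((n-\delta)(n-\beta))$.

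The same crossing observation is what your upper bound needs. Your comparison $\mathcal P_{\delta,\tau}(U,\Omega)\lesssim P_\delta(B_t)$ is not justified. Pairs lying outside $\overline{B_t}$ but in different components of $\Omega\setminus\partial B_t$ count on the left and not on the right. Instead, crossing forces $d(b)<(1+C_0)r/(1-\tau)$ and $|a-b|\ge\dist(b,\partial B_t)$. Integrating then gives $(1-\delta)\mathcal P_{\delta,\tau}(U,\Omega)\lesssim r^{n-\delta}/\delta$.

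The paper needs none of this, because it never re-proves Guo's theorem. It argues as in (1)$\Rightarrow$(2) of Proposition~\ref{prop-equivalence}:
\begin{itemize}
\item Take continuous $f$ vanishing on $B_0=B(x_0,d(x_0)/18)$. The level sets $\{|f|>t\}$ are open with relative measure at most $1-|B_0|/|\Omega|$, so \eqref{box-in-john} applies to each of them.
\item Integrating in $t$ with $\int_0^\infty|\chi_{\{|f|>t\}}(a)-\chi_{\{|f|>t\}}(b)|\,dt\le|f(a)-f(b)|$ gives $\int_\Omega|f|\,d\mathcal H^\beta_\infty\lesssim(1-\delta)[f]_{\mathcal W^{\delta,1}(\Omega)}$.
\item The cutoff decomposition around $3B_0$ from Theorem~\ref{thm:sJ-WPI} removes the vanishing condition.
\item Then $|E|^{\beta/n}\lesssim\mathcal H^\beta_\infty(E)$ and Lemma~\ref{lem:q-integral} with $q=n/\beta>1$ turn this into the fractional $(n/\beta,1)$-Sobolev--Poincar\'e inequality \eqref{q-FSP}.
\item Theorem~\ref{Guo-lem} then gives $s=\delta/((n-\delta)(q-1))=\delta\beta/((n-\delta)(n-\beta))$.
\end{itemize}
You already had the key ingredient $|E|^{\beta/n}\lesssim\mathcal H^\beta_\infty(E)$, but you applied it to a single set. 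Applying it to all level sets, i.e.\ passing to the functional inequality, lets you use Guo's result as a black box instead of redoing its hardest step.
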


The following result considers the special case $\beta=n-\delta$. It establishes the equivalence among the $1$-John property, the Boxing inequality, and the relative isoperimetric inequality, indicating that John domains are essentially the only
domains supporting these inequalities.

\begin{cor}\label{Box-relative}
Let $\Omega\subset\IR^n$ be a bounded domain that satisfies the separation property, and let $\delta, \tau\in (0,1)$. Then the following three conditions are equivalent.
\begin{itemize}
\item[(1)] $\Omega$ is a $1$-John domain.	
	
\item[(2)] $\Omega$ supports the Boxing inequality \eqref{intro-geometric-boxing} with $s(n-\delta)$ replaced by $n-\delta$.

\item[(3)] $\Omega$ supports the following relative isoperimetric inequality:
there exists a constant $C>0$ independent of $\delta$ such that for every  Lebesgue measurable set  $U\subset \Omega$,
\[
\min\big\{|U|, |\Omega\setminus U|\big\}^{(n-\delta)/{n}}\leq C(1-\delta) P_{\delta, \tau}(U, \Omega).
\]
\end{itemize}
\end{cor}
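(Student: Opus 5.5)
The plan is to prove the cycle of implications $(1)\Rightarrow(3)$, $(1)\Rightarrow(2)$, $(2)\Rightarrow(1)$ and $(3)\Rightarrow(1)$. Each one is obtained from Theorem~\ref{geometric-boxing} or Theorem~\ref{Box-John}, or from their proofs. Throughout I read $P_{\delta,\tau}$ in (3) as the improved perimeter $\mathcal P_{\delta,\tau}$.

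\emph{$(1)\Rightarrow(2)$.} This is Theorem~\ref{geometric-boxing} with $s=1$; the range condition $1\le \frac{n}{n-\delta}$ holds trivially. By the discussion in Section~\ref{John and Lip}, the constant then depends only on $n,\tau,\gamma,C_J$, and in particular not on $\delta$.

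\emph{$(2)\Rightarrow(1)$.} Here (2) gives \eqref{box-in-john} with $\beta=n-\delta$, because for open $U$ we have $\mathcal N_U=\emptyset$ and \eqref{intro-geometric-boxing} is equivalent to \eqref{content-boxing}. Theorem~\ref{Box-John} then shows that $\Omega$ is $s$-diameter John with
\[
s=\frac{\delta(n-\delta)}{(n-\delta)\delta}=1 .
\]
Since $1$-diameter John is the same as $1$-John by \cite[Theorem 2.16]{NV91}, this gives (1).

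\emph{$(1)\Rightarrow(3)$.} The perimeter $\mathcal P_{\delta,\tau}(U,\Omega)$ is unchanged when $U$ is replaced by $\Omega\setminus U$, because $|\chi_U(x)-\chi_U(y)|=|\chi_{\Omega\setminus U}(x)-\chi_{\Omega\setminus U}(y)|$. So we may assume $|U|\le|\Omega|/2$ and apply Theorem~\ref{geometric-boxing} with $s=1$ and $\gamma=1/2$. This yields a ball cover $\{B(x_i,r_i)\}$ of $U\setminus\mathcal N_U$. Since $|\mathcal N_U|=0$ and $n/(n-\delta)\ge1$,
\[
|U|\le \omega_n\sum_i r_i^n\le \omega_n\Big(\sum_i r_i^{n-\delta}\Big)^{n/(n-\delta)} .
\]
Raising to the power $(n-\delta)/n$ gives
\[
\min\{|U|,|\Omega\setminus U|\}^{(n-\delta)/n}\le C(n)\sum_i r_i^{n-\delta}\le C(1-\delta)\mathcal P_{\delta,\tau}(U,\Omega),
\]
with $C=C(n,\tau,C_J)$ independent of $\delta$.

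\emph{$(3)\Rightarrow(1)$.} This is the main obstacle. I would rerun the proof of Theorem~\ref{Box-John} with $\beta=n-\delta$. That proof uses the separation property to build, around a point $\gamma(t)$ on a curve, open test sets $U$: components of $\Omega\setminus\partial B_t$ not containing $x_0$, or suitable truncations of them. These sets have small measure relative to $\Omega$. The proof then compares a lower bound for $\mathcal H^\beta_\infty(U)$ with an upper bound for $\mathcal P_{\delta,\tau}(U,\Omega)$ in terms of $d(\gamma(t),\partial\Omega)^{n-\delta}$.

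The point to check is that the lower bound for $\mathcal H^{n-\delta}_\infty(U)$ in that argument comes only from volume, through the elementary inequality $\mathcal H^{n-\delta}_\infty(U)\ge c(n)|U|^{(n-\delta)/n}$. If so, then for these test sets (where $|U|\le|\Omega\setminus U|$) hypothesis (3) provides exactly the input the proof needs, and the estimate $\operatorname{diam}\gamma([0,t])\lesssim d(\gamma(t),\partial\Omega)$ follows verbatim. If the proof of Theorem~\ref{Box-John} instead uses a genuinely content-based lower bound (e.g.\ for thin sets), I would replace it by the volume bound $|U\cap B(y,r)|\gtrsim r^n$ for balls $B(y,r)\subset U$ near the curve. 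The isoperimetric inequality still yields $r^{n-\delta}\lesssim(1-\delta)\mathcal P_{\delta,\tau}(U,\Omega)$, and the perimeter of the truncated sets is estimated as in that proof. This reproduces the $1$-diameter John condition and hence (1).
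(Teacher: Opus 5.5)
Your $(1)\Leftrightarrow(2)$ is the paper's argument: Theorem~\ref{geometric-boxing} with $s=1$ in one direction, and Theorem~\ref{Box-John} with $\beta=n-\delta$ (so $s=1$) in the other. Your $(1)\Rightarrow(3)$ is correct but takes a different route. The paper plugs $f=\chi_U$ into Corollary~\ref{intro-s-john-FSPI}. You instead use the symmetry $\mathcal P_{\delta,\tau}(U,\Omega)=\mathcal P_{\delta,\tau}(\Omega\setminus U,\Omega)$, the geometric Boxing inequality with $\gamma=1/2$, and $\sum_i r_i^n\le(\sum_i r_i^{n-\delta})^{n/(n-\delta)}$. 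Both routes give a $\delta$-independent constant.

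The genuine gap is $(3)\Rightarrow(1)$, which you do not actually prove. Your plan rests on a guess about the proof of Theorem~\ref{Box-John} that does not match it. That proof builds no test sets from the separation property and never compares anything with $d(\gamma(t),\partial\Omega)^{n-\delta}$. It turns the set inequality into a \emph{functional} fractional Sobolev--Poincar\'e inequality in $L^{n/\beta}(\Omega)$, then applies Guo's characterization (Theorem~\ref{Guo-lem}) as a black box; the separation property is used only inside that theorem. Your instinct that only volume information matters is right. But ``rerunning'' the proof with a volume lower bound, or substituting $|U\cap B(y,r)|\gtrsim r^n$ for a content bound, does not correspond to any concrete step. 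Redoing Buckley--Koskela/Guo directly for sets, including estimating $\mathcal P_{\delta,\tau}$ of truncated components of $\Omega\setminus\partial B_t$, would be a substantial argument that you have not supplied.

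The missing idea is to pass from (3) to a Sobolev--Poincar\'e inequality for functions via the fractional coarea formula.
\begin{itemize}
\item For $f\in C(\Omega)\cap L^1(\Omega)$, take a median $m$ of $f$. Then $U_t=\{f-m>t\}$ and $V_t=\{f-m<-t\}$, $t>0$, have measure at most $|\Omega|/2$.
\item Hence (3) gives $|U_t|^{(n-\delta)/n}\le C(1-\delta)\mathcal P_{\delta,\tau}(U_t,\Omega)$, and likewise for $V_t$.
\item Combine Lemma~\ref{lem:q-integral} (with $q=n/(n-\delta)$), the identity $\int_{\IR}|\chi_{\{f>t\}}(x)-\chi_{\{f>t\}}(y)|\,dt=|f(x)-f(y)|$, and Tonelli. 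This yields $\|f-m\|_{L^{n/(n-\delta)}(\Omega)}\le 2C(1-\delta)[f]_{\mathcal W^{\delta,1}(\Omega)}$.
\item Consequently $\|f-f_\Omega\|_{L^{n/(n-\delta)}(\Omega)}\le 4C(1-\delta)[f]_{\mathcal W^{\delta,1}(\Omega)}$.
\end{itemize}
The paper reaches the same inequality differently. It applies (3) to open sets avoiding $B_0$, which satisfy $|U|\le\gamma_0|\Omega|$, and reruns Proposition~\ref{lem:sJ-WPI-B0} and Theorem~\ref{thm:sJ-WPI} with Lebesgue measure in place of $\mathcal H^{n-\alpha}_\infty$. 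Either way, $\Omega$ is a fractional $(n/(n-\delta),1)$-Sobolev--Poincar\'e domain. Theorem~\ref{Guo-lem} then gives $s=\delta/\bigl((n-\delta)(q-1)\bigr)=1$, since $q-1=\delta/(n-\delta)$.
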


\textbf{Structure.} The structure of the paper is as follows.
In Section~\ref{sec:prelis}, we introduce the notation used in this paper and recall some basic facts on Hausdorff content, Hausdorff measure, Choquet integral, fractional Sobolev spaces and fractional perimeter.
In Section~\ref{sec-3}, we prove the functional Boxing inequality for continuous functions on $s$-distance John domains.
In Section~\ref{Chapter-4}, we study the density property of improved fractional Sobolev spaces and the Hausdorff dimension estimate of the exceptional set of $f$. These results are crucial for applying the results from Section \ref{sec-3} to $L^1$-functions.
In Section~\ref{sec-5}, we show the functional Boxing inequality for $L^1$-functions on $s$-distance John domains (Theorem~\ref{functional-boxing}) and derive the geometric Boxing inequality (Theorem~\ref{geometric-boxing}) from Theorem~\ref{functional-boxing}.
In Section~\ref{Chapter-6}, we establish the fractional Poincaré--Wirtinger trace inequality (Theorem~\ref{cor:1-john-Trace-intro}), the Poincaré--Wirtinger inequality (Corollary~\ref{intro:s-john-gFPI}) and the fractional Hardy-type inequality (Corollary~\ref{cor:1-john-hardy-intro}).
We also prove the equivalence between the functional and geometric Boxing inequalities (Proposition~\ref{prop-equivalence}).
Moreover, under the  separation property, we prove Theorem~\ref{Box-John} and Corollary~\ref{Box-relative}.
Finally, in Section~\ref{John and Lip}, we summarize all the results for $1$-John domains and Lipschitz domains.

\medskip


\section{\large Preliminaries}\label{sec:prelis}
Throughout this paper, let $\mathbb{N} = \{1,2,\dots\}$ and let $\Omega\subset\IR^n$ (with $n\geq 2$) be a domain (namely, an open and connected set) which is nonempty. We denote the standard Euclidean norm of a point $x\in\mathbb{R}^n$ by $\lvert x \rvert$, and denote the diameter of $\Omega$ by $\diam \Omega:=\sup\{|x-y|: x, y\in \Omega\}$.
The Lebesgue measure of a measurable subset  $U$ of $\mathbb{R}^n$ is denoted by $|U|$. 
The distance from a point
$x$ to the boundary $\partial \Omega$ of $\Omega$ is $\dist(x,\partial \Omega):=\inf_{y\in\partial\Omega}|x-y|.$ In cases where no confusion arises, we often abbreviate $d(x)=d(x, \partial\Omega)$. The open ball with center $x\in \mathbb R^n$ and radius $r>0$ is denoted by $B(x, r)$. Let $\omega_n$ denote the volume of the unit ball in $\mathbb{R}^n$. Then the surface area of the unit sphere in $\mathbb{R}^n$ is $n\omega_n$.

Constants are positive and denoted by $C$; we write $C=C(a,b,\ldots)$ to signify that the constant
depends only on the parameters $a,b,\ldots$.

Assume that $\mu$ is a Radon measure on $\Omega$ and $1\leq q<\infty$;
we only work with nonnegative measures.
The space $L^q(\Omega, \mu)$ consists of all $\mu$-measurable functions $f$ on $\Omega$ whose $L^q$-norm  is finite. The $L^q$-norm of $f$ is given by
\[
\left\| f  \right\|_{L^q(\Omega, d\mu)} :=\left(\int_{\Omega}|f|^q\,d\mu\right)^{1/q}.
\]
When $\mu$ is the Lebesgue measure, we simply write $L^q(\Omega)$ instead of $L^q(\Omega, \mu)$. For any $f\in L^1(\Omega)$, 
assuming that $0<|\Omega|<\infty$,
the integral average of $f$ over $\Omega$
is denoted by
\[
f_\Omega :=\vint{\Omega}f\,dx := \frac{1}{|\Omega|} \int_{\Omega}f\,dx .
\]

\begin{lem}\label{lem:q-integral}
	Let $\mu$ be an outer measure on a domain $\Omega\subset \mathbb R^n$ and let $1\leq q<\infty$. Then for every function $f$ defined on $\Omega$, we have
	\[
	\left(\int_0^\infty\lambda^{q-1}\mu(\Omega_\lambda)\, d\lambda\right)^{1/q} \leq  \int_{0}^{\infty}\mu(\Omega_\lambda)^{{1}/{q}}\,d\lambda,
	\]
	where $\Omega_\lambda=\left\{x\in\Omega:\left\lvert f\right\rvert>\lambda  \right\}$ with $\lambda\geq 0$.
\end{lem}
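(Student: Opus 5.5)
The function $g(\lambda):=\mu(\Omega_\lambda)$ is nonincreasing in $\lambda$, because $\Omega_\lambda\supset\Omega_{\lambda'}$ for $\lambda\le\lambda'$ and an outer measure is monotone. This monotonicity is the only structure I will use; no measurability of $f$ is needed, since both sides are integrals of a monotone (hence Borel measurable) function of $\lambda$. If the right-hand side is infinite there is nothing to prove, so I assume $I:=\int_0^\infty g(\lambda)^{1/q}\,d\lambda<\infty$.

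The key pointwise bound comes from monotonicity. For every $\lambda>0$,
\[
\lambda\, g(\lambda)^{1/q}\le \int_0^\lambda g(t)^{1/q}\,dt\le I,
\]
since $g(t)\ge g(\lambda)$ for $t\le\lambda$. Hence $g(\lambda)^{(q-1)/q}\le (I/\lambda)^{q-1}$, and therefore
\[
\lambda^{q-1}g(\lambda)=\lambda^{q-1}g(\lambda)^{(q-1)/q}\,g(\lambda)^{1/q}\le I^{q-1}g(\lambda)^{1/q}.
\]
Integrating over $\lambda\in(0,\infty)$ gives $\int_0^\infty\lambda^{q-1}g(\lambda)\,d\lambda\le I^{q-1}\cdot I=I^q$. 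Taking $q$-th roots yields the claim. The case $q=1$ is trivial, since the two sides then coincide.

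The argument is entirely elementary, and I expect no real obstacle. The only points needing care are the case $I=\infty$, and the possibility that $g(\lambda)=\infty$ for some $\lambda$. The latter cannot happen for $\lambda>0$ when $I<\infty$, by the displayed bound. At $\lambda=0$ it affects only a null set of the integral on the left.

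An alternative route would view the statement as the embedding of the Lorentz space $L^{q,1}$ into $L^q$, via a Minkowski-type inequality. The direct monotonicity argument above is shorter, so I would use that.
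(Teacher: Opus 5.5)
Your proof is correct and follows essentially the same route as the paper. Both arguments use monotonicity of $\lambda\mapsto\mu(\Omega_\lambda)$ to get $\lambda\,\mu(\Omega_\lambda)^{1/q}\le\int_0^\lambda\mu(\Omega_t)^{1/q}\,dt$, split $\lambda^{q-1}\mu(\Omega_\lambda)=\lambda^{q-1}\mu(\Omega_\lambda)^{(q-1)/q}\,\mu(\Omega_\lambda)^{1/q}$, and bound the first factor by $I^{q-1}$; the paper simply keeps $\int_0^\lambda$ for one more line before enlarging it to $\int_0^\infty$, and your remarks on measurability and on the cases $I=\infty$ and $\mu(\Omega_\lambda)=\infty$ are correct and slightly more careful than the paper's version.
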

\begin{proof}
	Since $\Omega_\lambda\subset\Omega_t$ for $0<t\leq \lambda,$ it holds that 
	$$\lambda^{q-1}\mu(\Omega_\lambda)^{\frac{q-1}{q}}=\left(\int_{0}^{\lambda}\mu(\Omega_\lambda)^{1/q}\,dt\right)^{q-1}\leq \left(\int_{0}^{\lambda}\mu(\Omega_t)^{1/q}\,dt\right)^{q-1}.$$
	This implies
	\begin{align*}
		\left(\int_0^\infty\lambda^{q-1}\mu(\Omega_\lambda)\, d\lambda\right)^{1/q}	&=\left(\int_{0}^{\infty}\lambda^{q-1}\mu(\Omega_\lambda)^{\frac{q-1}{q}}\mu(\Omega_\lambda)^{1/q}\,d\lambda\right)^{1/q}\\
		&\leq \left(\int_{0}^{\infty}\left(\int_{0}^{\lambda}\mu(\Omega_t)^{1/q}\,dt\right)^{q-1}\mu(\Omega_\lambda)^{1/q}\,d\lambda\right)^{1/q}\\
		&\leq \left(\int_{0}^{\infty}\mu(\Omega_t)^{1/q}\,dt\right)^{\frac{q-1}{q}}\left(\int_{0}^{\infty}\mu(\Omega_\lambda)^{1/q}\,d\lambda\right)^{1/q}\\
		&=\int_{0}^{\infty}\mu(\Omega_\lambda)^{{1}/{q}}\,d\lambda,
	\end{align*}
	which finishes the proof.
\end{proof}

\subsection{Hausdorff content, Hausdorff measure and Choquet integral} \label{Hausdorff}
We give a brief introduction about  the definitions and basic properties of Hausdorff content, Hausdorff measure and Choquet integrals. For more details, we refer the interested reader to \cite{Adams98,EGbook,YY08,HH23}.

Let $0<\alpha\leq n, 0<s\leq \infty$.  For every set $ U \subset \mathbb{R}^n$, we define
\[
\mathcal{H}_s^{ \alpha}(U) := \inf\left\{ \sum_{i=0}^\infty \omega_{ \alpha} r_i^{\alpha} : U \subset \bigcup_{i=0}^\infty B(x_i,r_i),\, r_i<s \right\},
\]
where $ \omega_{ \alpha} := \pi^{ \alpha/2} / \Gamma\left( \frac{ \alpha}{2} + 1 \right)$. When $s=\infty$, we call $\mathcal{H}_\infty^{ \alpha}(U)$ the {\it $\alpha$-dimensional  Hausdorff content} of $U$. Moreover, we define {\it  $\alpha$-dimensional Hausdorff measure} of $U$ by
\[
\mathcal{H}^\alpha(U):=\lim_{s\to 0}\mathcal{H}_s^{ \alpha}(U)=\sup_{s>0}\mathcal{H}_s^{ \alpha}(U).
\]
Then, the {\it Hausdorff dimension} of a set $U\subset\IR^n$ is
\[
\text{dim}_{H}(U):=\inf \left\{\alpha:\mathcal{H}^\alpha(U)=0\right\} .
\]
A notable fact concerning the Hausdorff content \(\mathcal{H}_\infty^{\alpha}\)  and the Hausdorff measure \(\mathcal{H}^{\alpha}\) is that they have the same negligible sets; \cite[Exercise 8.6]{Heinonen}. For any ball $B(x, r)$, we have $\mathcal{H}_\infty^{ \alpha}(B(x, r))=\omega_\alpha r^\alpha. $ Therefore, if $U\subset \mathbb R^n$ is a Lebesgue measurable set, we have $|U|=\mathcal H^{n}_\infty(U)=H^{n}(U)$; see \cite[Theorem 2.5]{EGbook} and the proof of \cite[Proposition 2.5]{HH23}

Let $\mu$ be a Radon measure on $\mathbb R^n.$ Suppose there exists a constant $C_\mu$ such that for any $B(x, r)$, 
\begin{equation}\label{eq:mu-H}
\mu(B(x, r))\leq C_\mu r^\alpha.
\end{equation}
Then from the definition of the Hausdorff content,
there is a constant
{$C=C(n,C_\mu)$ such that for any $\mu$-measurable set $U\subset \mathbb R^n$ and for all $0\le \alpha\le n$,}
\begin{equation}\label{mu-content}
\mu(U)\leq C \mathcal H^\alpha_\infty(U).
\end{equation}
In fact, for a Radon measure $\mu$, \eqref{eq:mu-H} and \eqref{mu-content} are equivalent.

The {\it dyadic Hausdorff content} $\widehat {\mathcal H}_{\infty}^{\alpha}$ is defined for any set $U\subset \mathbb R^n$ by
\[
\widehat {\mathcal{H}}_\infty^{ \alpha}(U) := \inf\left\{ \sum_{i=1}^\infty \ell_i^{\alpha}  : U \subset \text{int}\,\bigcup_{i=1}^\infty Q_i \right\},
\]
where the infimum is taken over all sequences of closed dyadic cubes $Q_i$ and $\ell_i$ is the side
length of the cube $Q_i$. It follows from \cite[Proposition 2.3]{YY08} that there exists a positive
{constant $C=C(n)$ such that for any set $U\subset \mathbb R^n$
and for all $0\le \alpha\le n$,}
\begin{equation}\label{dyadic-comparable}
	C^{-1}\mathcal{H}_\infty^{ \alpha}(U)\leq \widehat{\mathcal{H}}_\infty^{ \alpha}(U)\leq C \mathcal{H}_\infty^{ \alpha}(U).
\end{equation}

The Hausdorff content $\mathcal{H}_\infty^{ \alpha}$ is 
{not known} 
to be strongly subadditive. 
However, the dyadic Hausdorff content $\widehat{\mathcal{H}}_\infty^{ \alpha}$ is strongly
subadditive in the sense that 
\begin{equation}\label{strong subadditive}
	\widehat{\mathcal{H}}_\infty^{ \alpha}(U_1\cup U_2)+\widehat{\mathcal{H}}_\infty^{ \alpha}(U_1\cap U_2)\leq \widehat{\mathcal{H}}_\infty^{ \alpha}(U_1)+\widehat{\mathcal{H}}_\infty^{ \alpha}(U_2)
\end{equation}
for every $U_1, U_2\subset \mathbb R^n$.
Using the strong subadditivity, one can show that
the dyadic Hausdorff content $\widehat{\mathcal{H}}_\infty^{ \alpha}$ is a capacity in the sense of Choquet; see \cite[Theorem 2.1]{YY08}. Thus, $\widehat{\mathcal{H}}_\infty^{ \alpha}$ satisfies
\begin{equation}\label{increasing-lemma}
	\lim_{k\rightarrow \infty} \widehat{\mathcal{H}}_\infty^{ \alpha}(U_k) =\widehat{\mathcal{H}}_\infty^{ \alpha}\left(\bigcup_{k=1}^{\infty} U_k\right)
\end{equation}
for every non-decreasing sequence $\{U_k\}_{k=1}^{\infty}$ of subsets of $\mathbb R^n$. 

For any function $ f: \Omega \to \mathbb{R}$,
the Choquet integral of $f$
with respect to the Hausdorff content $\mathcal{H}_\infty^{ \alpha}$ is defined by
\[
\int_{\Omega} |f| \, d\mathcal{H}_\infty^{ \alpha} := \int_0^\infty \mathcal{H}_\infty^{\alpha}(\{ |f| > t \}) \, dt.
\]

We list the following basic properties of the Choquet integral, which can be found in  \cite{Adams98,HH23} and will be used without further mention later. For any two nonnegative functions $f$ and $g$ on $\Omega$, we have the following.

\begin{description}
\item[(Monotonicity)] $\displaystyle\int_{\Omega} f(x) \, d\mathcal{H}_{\infty}^{\alpha} \leq \int_{\Omega} g(x) \, d\mathcal{H}_{\infty}^{\alpha}\ $ whenever $f(x)\leq g(x)$ for $\mathcal H^{\alpha}_\infty$-a.e. $x\in \Omega$.
	\item[(Sublinearity)] $\displaystyle\int_{\Omega} f(x) + g(x) \, d\mathcal{H}_{\infty}^{\alpha} \leq 2\left( \int_{\Omega} f(x) \, d\mathcal{H}_{\infty}^{\alpha} + \int_{\Omega} g(x) \, d\mathcal{H}_{\infty}^{\alpha} \right).$
	\item[(H\"older's inequality)] Let $p, q\in (1, \infty)$ with $1=1/p+1/q$. Then $$\displaystyle\int_{\Omega} f(x)g(x) \, d\mathcal{H}_{\infty}^{\alpha} \leq 2\left( \int_{\Omega} f(x)^p \, d\mathcal{H}_{\infty}^{\alpha} \right)^{1/p} \left( \int_{\Omega} g(x)^{q} \, d\mathcal{H}_{\infty}^{\alpha} \right)^{q}.$$
\end{description}
Similarly, we can define the integral of $f$ with respect to the dyadic Hausdorff content $\widehat{\mathcal{H}}_{\infty}^\alpha$ and the above basic properties will hold as well.

\subsection{Fractional Sobolev spaces and fractional perimeter}\label{FP}
Let $\delta\in (0, 1)$, $p\in [1, \infty)$ and let $\Omega\subset \mathbb R^n$ be a domain.
The {\it fractional Sobolev space} $W^{\delta, p}(\Omega)$ is defined  as
\[
 {W}^{\delta,p}(\Omega):=\left\{f\in L^{p}(\Omega):~[f]_{ {W}^{\delta,p}(\Omega)}:=\int_{\Omega}\int_{\Omega}\frac{|f(x)-f(y)|^p}{|x-y|^{n+\delta p}}\,dx\,dy<\infty\right\},
\]
with the norm
\[
\left\lVert f\right\rVert^p _{{W}^{\delta,p}(\Omega)}:= \left\| f  \right\|^p_{L^p(\Omega)}+ [f]_{{W}^{\delta,p}(\Omega)}.
\]

When studying the fractional Sobolev--Poincar\'e inequalities on irregular domains (including John domains) and extending the Bourgain--Brezis--Mironescu formula \cite{BBM01} to arbitrary domains, the following improved fractional Sobolev space plays an important role; see \cite{HV13,DyIhVa16,DD22,M24}. Let $\delta, \tau\in (0, 1)$ and $p\in [1, \infty)$, the {\it improved fractional Sobolev space} $\mathcal {W}^{\delta,p}(\Omega)$ is defined as
\[
\mathcal {W}^{\delta,p}(\Omega):=\left\{f\in L^{p}(\Omega):~[f]_{\mathcal {W}^{\delta,p}(\Omega)}:=\int_{\Omega}\int_{|x-y|<\tau  d(y)}\frac{|f(x)-f(y)|^p}{|x-y|^{n+\delta p}}\,dx\,dy<\infty\right\},
\]
with the norm
\[
\left\lVert f\right\rVert^p _{\mathcal {W}^{\delta,p}(\Omega)}:= \left\| f  \right\|^p_{L^p(\Omega)}+ [f]_{\mathcal {W}^{\delta,p}(\Omega)}.
\]
Here, in the improved fractional Sobolev energy $[f]_{\mathcal {W}^{\delta,p}(\Omega)}$, the integral over $|x-y|<\tau d(y)$ means  integration  over the set $\{x\in \Omega: |x-y|<\tau d(y)\}$. Throughout this paper, the notation $|x-y|<\tau d(y)$ always denotes  the set $\{x\in \Omega: |x-y|<\tau d(y)\}$, unless otherwise specified.

It is evident from the definitions that $[f]_{\mathcal W^{\delta, p}(\Omega)}\leq [f]_{W^{\delta, p}(\Omega)}$. Hence, we have the natural embedding $W^{\delta, p}(\Omega)\subset \mathcal W^{\delta, p}(\Omega)$. It is known,
for all $1\le p<\infty$, that the improved fractional Sobolev space $\mathcal W^{\delta, p}(\Omega)$ coincides with $W^{\delta, p}(\Omega)$ (with equivalence of norms) when $\Omega$ is a Lipschitz domain (see \cite[Proposition 5]{Dyda06}), or more generally a uniform domain at least when $1<p<\infty$
(see \cite[Corollary 4.5]{Saksman17}). However, if the domain $\Omega$ is merely a $1$-John domain, then $\mathcal W^{\delta, p}(\Omega)$ and $W^{\delta, p}(\Omega)$
need not coincide for $1<p<\infty$,
as demonstrated by an example constructed in \cite[Proposition 3.4]{DyIhVa16}. To the best of our knowledge, a complete geometric characterization of domains on which these two spaces coincide remains an open problem.

In this paper, we focus primarily on the case
$p=1$. In this case, the (improved) fractional Sobolev spaces  are closely related to fractional perimeters. For any Lebesgue measurable set $U\subset \Omega$, define
$$P_\delta(U, \Omega):=2\int_{U}\int_{\Omega\setminus U}\frac{1}{|x-y|^{n+\delta}}\, dx\, dy.$$
It is evident that 
$$P_\delta(U, \Omega)=\int_{\Omega}\int_{\Omega}\frac{|\chi_{U}(x)-\chi_U(y)|}{|x-y|^{n+\delta}}\, dx\, dy=[\chi_U]_{W^{\delta,1}(\Omega)}.$$
When $\Omega=\mathbb R^n$, one simply writes $P_\delta(U)=P_\delta(U, \mathbb R^n)$. This quantity appears in \cite{PS20} and has been called {\it fractional perimeter} \cite{CaRoSa10} or {\it non-local $\delta$-perimeter} \cite{BLP14}.
The quantity $P_\delta(U, \Omega)$ is usually called  the {\it  relative fractional perimeter}. For a comprehensive introduction, we refer to the survey \cite{Serra24} and references therein.

As indicated above,  the improved fractional Sobolev energy is more suitable when $\Omega$ is a John domain or other irregular domain.  Therefore, we introduce the following notion of an  {\it improved  relative fractional perimeter}. Let $\delta, \tau\in(0, 1)$.  For any Lebesgue measurable set $U\subset \Omega$, define 
$$\mathcal P_{\delta, \tau}(U, \Omega):=[\chi_U]_{ \mathcal {W}^{\delta,1}(\Omega)}=\int_\Omega \int_{\left\lvert x-y\right\rvert <\tau d(y)}\frac{\left\lvert \chi_U(x)-\chi_U(y)\right\rvert }{\left\lvert x-y\right\rvert^{n+\delta} }\,dx\,dy.$$
It is clear that $\mathcal P_{\delta, \tau}(U, \Omega)\leq P_\delta(U, \Omega)$ and $\mathcal P_{\delta, \tau}(U, \mathbb R^n)=P_\delta(U, \mathbb R^n)=P_\delta(U)$. When $\Omega$ is a Lipschitz domain, the relations of the underlying fractional Sobolev spaces imply that $\mathcal P_{\delta, \tau}(U, \Omega)$ is comparable with $P_\delta(U, \Omega)$.

We close this section by presenting the following lemma proved in \cite [Lemma 4.1]{PS20}.
\begin{lem}\label{lem:PS-lem}
	Given $\gamma\in (0,1)$, there exists a constant $C=C(\gamma, n)$ such that 
	\[
r^{n-\delta}\leq C(1-\delta)
	\int_{A}\int_{B(z, r)\setminus A}\frac{dx\,dy}{|x-y|^{n+\delta}}
	\]
	for every $r>0$, every $\delta\in (0,1)$ and every Lebesgue measurable set $A\subset B(z, r)$ such that $|A|/|B(z, r)|=\gamma.$
\end{lem}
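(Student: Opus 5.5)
By homogeneity we may assume $z=0$ and $r=1$. Under $x\mapsto rx$, $y\mapsto ry$ the double integral $\int_{A}\int_{B(z,r)\setminus A}|x-y|^{-n-\delta}\,dx\,dy$ scales exactly like $r^{n-\delta}$. So it suffices to show that for every measurable $A\subset B:=B(0,1)$ with $|A|=\gamma|B|$,
\[
I:=\int_A\int_{B\setminus A}\frac{dx\,dy}{|x-y|^{n+\delta}}\ \ge\ \frac{c(n,\gamma)}{1-\delta}.
\]
The factor $1/(1-\delta)$ should come from summing dyadic scales. Split the interaction region $\{|x-y|<2\}$ into shells $2^{-k-1}\le |x-y|<2^{-k}$, $k\ge -1$. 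On the $k$-th shell the kernel is at least $2^{k(n+\delta)}$. The key estimate is a uniform ``scale-$\rho$ isoperimetric'' lower bound:
\[
J(\rho):=\int_B\int_{B\cap\{|x-y|<\rho\}}|\chi_A(x)-\chi_A(y)|\,dx\,dy\ \ge\ c(n,\gamma)\,\rho^{n+1}\qquad (0<\rho\le 1).
\]
Note that $J(\rho)=2\int_A\int_{(B\setminus A)\cap\{|x-y|<\rho\}}dx\,dy$.

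Granting this bound, we sum over shells. A shell contribution is $J(2^{-k})-J(2^{-k-1})$, which may itself be small. The cleanest way to avoid this is the layer-cake form
\[
I=\tfrac12\int_0^\infty (n+\delta)\rho^{-n-\delta-1}J(\rho)\,d\rho,
\]
which follows from Fubini since $|x-y|^{-n-\delta}=\int_{|x-y|}^\infty (n+\delta)\rho^{-n-\delta-1}\,d\rho$. Inserting the key bound gives
\[
I\ \ge\ c\int_0^1\rho^{-\delta}\,d\rho\ =\ \frac{c}{1-\delta}.
\]
This proves the claim with constants independent of $\delta$.

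To prove the scale-$\rho$ bound, I would smooth $\chi_A$ at scale $\rho$ inside the ball. Set $u(x):=\vint{B(x,\rho)\cap B}\chi_A$, and use that $|B(x,\rho)\cap B|\ge c(n)\rho^n$ for $x\in B$ and $\rho\le1$. Then three estimates hold.

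\begin{itemize}
\item[(i)] $\int_B|\chi_A-u|\le C\rho^{-n}J(\rho)$.
\item[(ii)] $u$ is Lipschitz-type at scale $\rho$. Either estimate $|\nabla u|$ directly, or avoid derivatives by comparing $u(x)$ and $u(x')$ for $|x-x'|<\rho$. This gives
\[
\int_B\int_{B\cap\{|x-x'|<\rho\}}|u(x)-u(x')|\le C J(2\rho).
\]
\item[(iii)] A chaining argument in the convex set $B$ connects any two points by about $1/\rho$ steps of length $\rho$. Combined with (ii) this yields
\[
\int_B|u-u_B|\le C\rho^{-n-1}J(C\rho).
\]
\end{itemize}

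Then
\[
2\gamma(1-\gamma)|B|=\int_B|\chi_A-\gamma|\le \int_B|\chi_A-u|+\int_B|u-u_B|+|B|\,|u_B-\gamma|.
\]
The last term is bounded by $\int_B|u-\chi_A|$. Together with $\rho\le1$ this gives $J(C\rho)\ge c\rho^{n+1}$. Rescaling $\rho$ and using that $J$ is monotone in $\rho$ gives the key bound.

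The main obstacle is step (iii): the Poincar\'e/chaining estimate at discrete scale $\rho$ with the correct power $\rho^{-n-1}$ and constants uniform in $\rho$ and $A$. The boundary of $B$ is harmless because balls are convex (uniform) domains, so I expect a standard telescoping along straight segments, averaged over nearby parallel segments, to do it.
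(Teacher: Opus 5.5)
Your proof is correct, but it takes a different route from the paper. The paper does not prove this lemma. It imports the inequality from Ponce--Spector \cite[Lemma 4.1]{PS20}, where it is stated for Borel sets. It then passes to Lebesgue measurable $A$ by replacing $A$ with a Borel set that differs from it by a null set, which changes neither $|A|$ nor the double integral.

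Your argument is self-contained. The layer-cake identity $I=\frac12\int_0^\infty(n+\delta)\rho^{-n-\delta-1}J(\rho)\,d\rho$ isolates the BBM factor as $\int_0^1\rho^{-\delta}\,d\rho=(1-\delta)^{-1}$. Everything is then reduced to the uniform scale-$\rho$ isoperimetric bound $J(\rho)\ge c(n,\gamma)\rho^{n+1}$. In effect this is a direct proof of the $p=1$ Bourgain--Brezis--Mironescu Poincar\'e inequality on a ball, specialized to $\chi_A$. Compared with the citation, it handles Lebesgue measurable sets directly and gives an explicit constant.

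You can also shorten it considerably. The smoothing steps (i)--(ii) are unnecessary, and the step you flagged as the main obstacle is immediate. For $x,y\in B$, put $x_k=x+\frac{k}{N}(y-x)$ with $N=\lceil 2/\rho\rceil$. The linear map $(x,y)\mapsto(x_k,x_{k-1})$ has Jacobian of absolute value $N^{-n}$. By convexity it maps $B\times B$ into $\{(\tilde x,\tilde y)\in B\times B:|\tilde x-\tilde y|<\rho\}$, since $|x_k-x_{k-1}|<2/N\le\rho$. This is exactly the change of variables the paper uses in the proof of Theorem~\ref{thm:sJ-WPI}. Applying the telescoping inequality directly to $\chi_A$ and using $N\le 3/\rho$ gives
\[
2\gamma(1-\gamma)|B|^2=\int_B\int_B|\chi_A(x)-\chi_A(y)|\,dx\,dy\le N^{n+1}J(\rho)\le 3^{n+1}\rho^{-n-1}J(\rho).
\]
Combined with your layer-cake step, this gives $I\ge n\gamma(1-\gamma)\omega_n^2\,3^{-n-1}(1-\delta)^{-1}$ when $r=1$. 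So $C$ can be taken to be $3^{n+1}/(n\gamma(1-\gamma)\omega_n^2)$.
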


The original statement in \cite[Lemma 4.1]{PS20} is given for every Borel set $A\subset B(z, r)$, whereas here we extend it to every Lebesgue measurable set. This extension is immediate because for any Lebesgue measurable set there exists a Borel set such that their symmetric difference has Lebesgue measure zero.


\section{\large {Functional Boxing inequality for continuous functions}}\label{sec-3}

In this section, we derive the functional Boxing inequality for continuous functions on $s$-distance John domains. We start with a weak version of the geometric Boxing inequality.
\begin{prop}\label{prop:sJ-box}
	Let $\delta, \tau \in (0, 1)$ and $1\leq s\leq \frac{n}{n-\delta}$. Assume that $\Omega \subset \mathbb{R}^n$ is {an $s$-distance John domain} with John center $x_J$ and John constant $C_J \geq 1$. Let $B_0 = B(x_J,  d(x_J)/18)$ and let $U \subset \Omega$ be an open set such that $U \cap B_0 = \emptyset$. Then there exist a countable collection of pairwise disjoint balls $\{B(x_i, R_i)\}_{i \in \mathbb{N}}$ such that
	\begin{equation}\label{covering-s-John}
		U \subset  \bigcup_{i \in \mathbb{N}} B(x_i, 5 C_1R_i)
	\end{equation}
	and
	\begin{equation}\label{eq:sJ-box}
		\sum_{i \in \mathbb{N}} R_i^{s(n - \delta)} \leq C_2 (1 - \delta) \mathcal P_{\delta, \tau}(U, \Omega),
	\end{equation}
	where $C_2$ depends only on $n$ and
	$C_1\ge 1$
	depends only on  $\tau$, $s$, $C_J$ and $\diam \Omega$.
\end{prop}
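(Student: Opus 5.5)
We follow the standard "John curve plus density-drop" scheme used for Sobolev--Poincaré inequalities on John domains, with Lemma~\ref{lem:PS-lem} providing the BBM factor in each ball.

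\emph{Step 1: a stopping ball for each point of $U$.} Fix $x\in U$ and a John curve $\gamma$ from $x$ to $x_J$. For each $t$ let $B_t=B(\gamma(t),\rho(t))$, where $\rho(t)=c\,\tau\, d(\gamma(t))$ with $c$ small, say $c=1/4$. For $x'\in B_t$ we then have $d(x')\ge \tfrac34 d(\gamma(t))$. Hence for $x,y\in B_t$,
\[
|x-y|<2\rho(t)\le \tau d(y),
\]
after adjusting $c$, so pairs in $B_t\times B_t$ are counted in $\mathcal P_{\delta,\tau}$. Since $U$ is open, $|U\cap B_t|/|B_t|\to 1$ as $t\to0$ (small balls around $x$ lie in $U$). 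At $t=1$ the ball $B_1$ is comparable to $B_0$; since $U\cap B_0=\emptyset$ and $\rho(1)$ may be taken $\ge$ the radius $d(x_J)/18$ (the $18$ makes the geometry fit), the density there is at most $1-c_n<1/2$. By continuity of $t\mapsto |U\cap B_t|/|B_t|$ there is $t_x$ with density exactly $1/2$. Set $B^x=B(\gamma(t_x),r_x)$. Lemma~\ref{lem:PS-lem} with $\gamma=1/2$ and $A=U\cap B^x$ gives
\[
r_x^{n-\delta}\le C(n)(1-\delta)\int_{U\cap B^x}\int_{B^x\setminus U}\frac{dx\,dy}{|x-y|^{n+\delta}}\le C(n)(1-\delta)\,\mathcal P_{\delta,\tau}(U\cap B^x\text{-localized}),
\]
where the last quantity denotes the part of the double integral with $y\in B^x$.

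\emph{Step 2: $x$ lies in a controlled enlargement of $B^x$.} The John condition gives
\[
|x-\gamma(t_x)|^s< C_J d(\gamma(t_x))\le C r_x,
\]
so $|x-\gamma(t_x)|\le (Cr_x)^{1/s}$. Since $r_x\le\diam\Omega$, we get $(Cr_x)^{1/s}\le C_1' r_x^{1/s}$. Thus with $R_x:=r_x^{1/s}$ (up to a constant depending on $\tau,s,C_J,\diam\Omega$) we have $x\in B(\gamma(t_x),C_1R_x)$ and $B^x\subset B(\gamma(t_x),C_1R_x)$. The cost is that $R_x^{s(n-\delta)}\approx r_x^{n-\delta}$, which is precisely why the exponent $s(n-\delta)$ appears.

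\emph{Step 3: Vitali covering.} The radii $R_x$ are bounded by a constant depending on $\diam\Omega$. Apply the $5r$-covering lemma to the balls $B(\gamma(t_x),C_1R_x)$ to extract a disjoint subfamily $\{B(x_i,C_1R_i)\}$ whose fivefold enlargements cover $U$, giving \eqref{covering-s-John}. The small balls $B^{x_i}\subset B(x_i,C_1R_i)$ are then pairwise disjoint, so summing Step 1 over $i$ yields
\[
\sum_i R_i^{s(n-\delta)}\lesssim \sum_i r_i^{n-\delta}\le C(n)(1-\delta)\mathcal P_{\delta,\tau}(U,\Omega).
\]
To keep $C_2=C(n)$ with all other dependence in $C_1$, we would define $R_i$ as $(r_i)^{1/s}$ exactly, absorbing constants into $C_1$. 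The disjoint balls in the statement are then taken to be $B(x_i,R_i)$, possibly after shrinking; this needs a bit of bookkeeping, since $R_i\ge r_i$ when $r_i\le 1$. If necessary, rescale or use $R_i=\max$ of the two radii.

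\emph{Main obstacle.} The delicate point is the bookkeeping in Steps 2--3. We must keep the disjointness of the balls entering the energy estimate while covering $x$ by a ball whose radius is only $r_x^{1/s}$, and place all domain-dependent constants into $C_1$ rather than $C_2$. This requires choosing the normalization $R=\lambda r^{1/s}$ with $\lambda=\lambda(\tau,s,C_J,\diam\Omega)\ge 1$ so that $R\ge r$ always holds (valid because $r\le\diam\Omega$). The Vitali selection is then made on the big balls, and disjointness of the big balls implies disjointness of the small ones. A second check is the endpoint density at $t=1$, which relies on the specific constant $18$ relative to $\tau$.
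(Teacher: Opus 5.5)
There is a genuine gap at the start of your Step 1, which breaks the stopping-time argument. With $\rho(t)=c\,\tau\,d(\gamma(t))$ you have $\rho(0)=c\tau d(x)>0$. So as $t\to0$ the balls $B_t$ tend to the fixed ball $B(x,c\tau d(x))$, not to the point $x$. Openness of $U$ only puts sufficiently small balls about $x$ inside $U$; it says nothing about $B(x,c\tau d(x))$.

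Concretely, take $U=B(x,\epsilon)$ with $\epsilon$ much smaller than $\tau d(x)$. If $B_t$ meets $U$, then $|\gamma(t)-x|<\rho(t)+\epsilon$. Using $d(\gamma(t))\ge d(x)-|\gamma(t)-x|$, this gives $\rho(t)>c\tau(d(x)-\epsilon)/(1+c\tau)$. Hence the density of $U$ in $B_t$ is below $1/2$ for every $t\in[0,1]$, no time $t_x$ with density exactly $1/2$ exists, and $x$ is never covered.

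The endpoint $t=1$ is also handled the wrong way round. If $\rho(1)\ge d(x_J)/18$, then $B_0\subset B_1$ and the density of $U$ in $B_1$ can be as large as $1-(d(x_J)/(18\rho(1)))^n$. With $c=1/4$ this exceeds $1/2$ unless $\tau$ is small. What you need is $\rho(1)\le d(x_J)/18$, so that $B_1\subset B_0$ and the density there is $0$.

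The missing idea is to tie the radius to the distance from $x$ rather than to $d(\gamma(t))$. The paper takes $\rho(t)=\tau_0|\gamma(t)-x|^s/(18C_J)$ with $\tau_0=\min\{\tau/\diam\Omega,\tau\}$. This choice does three things at once:
\begin{itemize}
\item it vanishes at $t=0$, so $B_t\subset U$ for small $t$;
\item by the John condition, $\rho(t)<\tau_0 d(\gamma(t))/18$, which preserves the Whitney-type property you need for pairs to be counted in $\mathcal P_{\delta,\tau}$ and gives $B_1\subset B_0$;
\item at the stopping point $y$ it yields the exact relation $|x-y|=(18\tau_0^{-1}C_JR_y)^{1/s}$, which is your Step 2.
\end{itemize}

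Alternatively, keep your radius with $c\le 1/18$ and add a preliminary stage. First shrink balls $B(x,r)$ over $0<r\le c\tau d(x)$. If the density at $r=c\tau d(x)$ is at most $1/2$, continuity in $r$ gives a ball centred at $x$ itself with density exactly $1/2$. Only otherwise move along the curve, where the intermediate value argument now applies.

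Once a stopping ball exists, your Steps 2--3 are fine. You set $R=r^{1/s}$, absorb $\max\{1,\diam\Omega\}^{1-1/s}$ into $C_1$, apply Vitali to the large balls, and inherit disjointness of the small ones. The paper instead forces $R_y<1$ through $\tau_0$, which achieves the same thing.
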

\begin{proof}
Since $\Omega$ is an $s$-distance John domain, it is bounded, and so $0<\operatorname{diam}\Omega<\infty$.
	Let $x\in U$ and let $\gamma: [0, 1]\rightarrow \Omega$ be a
	John curve  in $\Omega$ that joins $x$ with $x_J$ such that $\gamma(0)=x$, $\gamma(1)=x_J$ and
	$$d(\gamma(t),\partial \Omega)> C_J^{-1}|\gamma(t)-x|^{s}$$ 
	for all $0\leq t\leq 1.$
	
	Set
	\begin{equation}\label{tau_0}
		\tau_0:=\min\left\{\frac{\tau}{\diam \Omega}, \tau\right\}<1 \quad\text{and}\quad B_t:=B\left(\gamma(t), \frac{\tau_0 |\gamma(t)-x|^{s}}{18C_J}\right).
	\end{equation}
	Since $U$ is open,
	for sufficiently small $t>0$ we have $B_t\subset U$.
	When $t=1$, then $B_t=B(x_J, \tau_0 C_J^{-1}|x-x_0|^s/18)\subset B(x_J, \tau_0 d(x_J)/18) \subset B_0$ and hence it is disjoint from $U$. Thus if we trace along curve $\gamma,$ there is a ``time" $t$ such that $|U\cap B_t|=|B_t|/2$. This means that there is a point $y$ on the John curve $\gamma$ and a radius $R_y$ with
	\begin{align*}
		&(1)~|U\cap B(y, R_y)|=|B(y,R_y)|/2;\\
		&(2)~R_y\leq \tau_0 d(y)/18;\\
		&(3)~x\in {B\big(y, (36\tau_0^{-1}C_JR_y)^{1/s}\big).}
	\end{align*}
	{Condition
	$(2)$ is clear from \eqref{tau_0} and the definition of an $s$-distance John domain.}
 	For Condition (3), note that
 	$|x-y|=(18\tau_0^{-1} C_J R_y)^{1/s}<  (36\tau_0^{-1} C_J R_y)^{1/s}$.
	
	Let $C_1:=(36\tau_0^{-1}C_J)^{1/s}\geq 1$. Then the family $\big\{B(y, C_1 R_y^{1/s})\big\}_y$ forms a covering
	of $U$.
	According to the
$5$-covering
	theorem (cf. \cite[Theorem 1.24]{EGbook}), we can select a subfamily $\big\{B(y_i, C_1 R_{y_i}^{1/s})\big\}_i$ of pairwise disjoint balls with 
	\begin{equation}\label{cover}
		U\subset \bigcup_{i=1}^{\infty} B\big(y_i, 5 C_1 R_{y_i}^{1/s}\big).
	\end{equation}
	By taking $x_i=y_i$ and $R_i= R_{y_i}^{1/s}$, it is clear that \eqref{covering-s-John} holds with {$C_1=(36\tau_0^{-1}C_J)^{1/s}$ }depending only on $\tau,$ $s,$ $C_J$ and $\diam \Omega$.
	
	To simplify the notation, we write $B(y_i, R_{y_i})$ as
	$B_{y_i}$.
	By condition $(1)$ above, we see that $|U\cap B_{y_i}|=|B_{y_i}|/2$ for every $1\leq i<\infty$. Then it follows from Lemma~\ref{lem:PS-lem} that there is a constant $C'$ depending only on $n$, so that for each $1\leq i<\infty$,
	\begin{align}
		R_{y_i}^{n-\delta}&\leq C' (1-\delta) \int_{U\cap B_{y_i}}\int_{B_{y_i}\setminus U}\frac{dy\,dz}{|z-y|^{n+\delta}}\notag\\
		& =\frac{C' (1-\delta)}{2} \int_{B_{y_i}} \int_{B_{y_i}}  \frac{|\chi_U(z)-\chi_U(y)|}{|z-y|^{n+\delta}} \, dy\, dz.
		\label{estimate-R}
	\end{align}
	
	By using the condition $(2)$ and the choice of $\tau_0$ in \eqref{tau_0}, we obtain that for each $1\leq i<\infty$,
	\begin{equation}\label{bound-R_y}
		R_{y_i}\leq \frac{\tau_0}{18} d(y_i)=\frac{\tau}{18}\min\left\{\frac{d(y_i)}{\diam \Omega}, d(y_i)\right\}\leq \frac{\tau}{18} \min\{1, d(y_i)\}.
	\end{equation}
	For any $z\in B_{y_i}$, since $0<\tau<1$ and $d(z)\geq d(y_i)-R_{y_i}$, we obtain  from \eqref{bound-R_y} that 
	\[18R_{y_i}\leq \tau d(y_i)\leq \tau(d(z)+ R_{y_i})\leq\tau d(z)+ R_{y_i}. \]
	Then $|z-y|<2R_{y_i} <\tau d(z)$ for any $y\in B_{R_{y_i}}$. Therefore, $B_{y_i}\subset B(z, \tau d(z))$ for any $z\in B_{y_i}$. Plugging this into the estimate \eqref{estimate-R},
	we get
	\begin{equation}\label{estimate-A-2}
		R_{y_i}^{n-\delta}\leq\frac{C' (1-\delta)}{2} \int_{B_{y_i}}\int_{|y-z|< \tau d(z)} \frac{|\chi_U(z)-\chi_U(y)|}{|z-y|^{n+\delta}} \, dy\,dz.
	\end{equation}
	Consequently,
	\begin{align}
		\sum_{i=1}^{\infty} R_i^{s(n-\delta)} &=	\sum_{i=1}^{\infty} \left( R_{y_i}^{1/s}\right)^{s(n-\delta)}
		\leq    \sum_{i=1}^{\infty} \frac{C'(1-\delta) }{2} \int_{B_{y_i}}\int_{|y-z|< \tau d(z)} \frac{|\chi_U(z)-\chi_U(y)|}{|z-y|^{n+\delta}} \, dy\,dz.\label{estimate-A-1}
	\end{align}
	
	It follows from \eqref{bound-R_y} that $R_{y_i}<1$ and $B_{y_i}\subset \Omega$ for each $1\leq i<\infty$.  Since  $\big\{B(y_i,C_1 R_{y_i}^{1/s})\big\}_i$ are pairwise disjoint and $C_1\geq 1$, we know that $\{B_{y_i}\}_i$ are also pairwise disjoint. Hence, we derive from \eqref{estimate-A-1} that
	\begin{align*}
		\sum_{i=1}^{\infty}R_i^{s(n-\delta)}&\leq  \frac{C' }{2}(1-\delta) 
		\int_{\bigcup_{i=1}^{\infty} B_{y_i}}\int_{|y-z|< \tau d(z)} \frac{|\chi_U(z)-\chi_U(y)|}{|z-y|^{n+\delta}} \, dy\,dz\leq \frac{C' }{2} (1-\delta)\mathcal P_{\delta, \tau}(U,\Omega),
	\end{align*}
	which gives \eqref{eq:sJ-box} with $C_2={C' }/{2}$ depending only on $n$. Therefore,	 the proof of Proposition~\ref{prop:sJ-box} is complete.
\end{proof}

	\begin{rem}\label{rem:C_1}
		It can be observed from the proof of the Proposition \ref{prop:sJ-box} that when \(\Omega\) is a 1-John domain, \(C_1\) in equation \eqref{covering-s-John}  is in fact independent of \(\text{diam}\,\Omega\), depending only on \(\tau\) and \(C_J\). The reasoning is as follows: When \(\Omega\) is an s-John domain, our choice of \(\tau_0\) ensures \(R_{y_i} < 1\), thereby satisfying \(B_{y_i} \subset B\big(y_i, C_1 R_{y_i}^{1/s}\big)\)  and guaranteeing that the balls \(\{B_{y_i}\}_i\) are pairwise disjoint; when \(\Omega\) is a 1-John domain, \(\{B_{y_i}\}_i\) are automatically pairwise disjoint, so we can simply take \(\tau_0 = \tau\).
		It then follows from the definition of $C_1$ that it is independent of \(\text{diam}\,\Omega\).
	\end{rem}

\begin{prop}\label{lem:sJ-WPI-B0}
	Let $\delta, \tau \in (0, 1),$ $1\leq s\leq \frac{n}{n-\delta}$, $1\leq q\leq \frac{n}{s(n-\delta)}$, and $\alpha=n-qs(n-\delta)$. Suppose that $\Omega \subset \mathbb{R}^n$ is an $s$-distance John domain with John center $x_J$ and John constant $C_J \geq 1$. Let $B_0 = B(x_J, d(x_J)/18)$. Then there is  a constant $C_3=C_3(n, \tau, s, C_J, \diam \Omega)$
	such that the inequality
	\begin{align}\label{eq:sJ-WPI-B0}
		\left(\int_\Omega |f|^q\, d\mathcal{H}_\infty^{n-\alpha}\right)^{1/q}\leq C_3(1-\delta)\int_\Omega \int_{\left\lvert x-y\right\rvert <\tau d(y)}\frac{\left\lvert f(x)-f(y)\right\rvert }{\left\lvert x-y\right\rvert^{n+\delta} }\,dx\,dy,
	\end{align}
	holds for  every $f\in C(\Omega)$ with $f|_{B_0}=0$.
\end{prop}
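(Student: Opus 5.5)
Applying the geometric Proposition~\ref{prop:sJ-box} to the superlevel sets $U_t:=\{x\in\Omega:|f(x)|>t\}$, $t>0$, and integrating over $t$ via a coarea formula reduces \eqref{eq:sJ-WPI-B0} to a statement about these sets. Since $f\in C(\Omega)$, each $U_t$ is open, and since $f|_{B_0}=0$, we have $U_t\cap B_0=\emptyset$. Proposition~\ref{prop:sJ-box} therefore gives a covering $U_t\subset\bigcup_i B(x_i^t,5C_1R_i^t)$ with $\sum_i (R_i^t)^{s(n-\delta)}\le C_2(1-\delta)\mathcal P_{\delta,\tau}(U_t,\Omega)$. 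The target exponent is $n-\alpha=qs(n-\delta)$, so
\[
\mathcal H^{n-\alpha}_\infty(U_t)\le \omega_{n-\alpha}\sum_i (5C_1R_i^t)^{qs(n-\delta)}\le C\Big(\sum_i (R_i^t)^{s(n-\delta)}\Big)^{q},
\]
using $q\ge1$ and $\sum a_i^q\le(\sum a_i)^q$. Taking $q$-th roots gives $\mathcal H^{n-\alpha}_\infty(U_t)^{1/q}\le C(1-\delta)\mathcal P_{\delta,\tau}(U_t,\Omega)$, with $C$ depending on $n,\tau,s,C_J,\diam\Omega$ through $C_1$ and $C_2$.

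Next, the Choquet integral of $|f|^q$ is $\int_0^\infty \mathcal H^{n-\alpha}_\infty(\{|f|^q>\lambda\})\,d\lambda=q\int_0^\infty t^{q-1}\mathcal H^{n-\alpha}_\infty(U_t)\,dt$. Lemma~\ref{lem:q-integral}, applied with the outer measure $\mu=\mathcal H^{n-\alpha}_\infty$, then bounds the left side of \eqref{eq:sJ-WPI-B0} by $q^{1/q}\int_0^\infty \mathcal H^{n-\alpha}_\infty(U_t)^{1/q}\,dt$. Inserting the previous bound yields
\[
\Big(\int_\Omega|f|^q\,d\mathcal H^{n-\alpha}_\infty\Big)^{1/q}\le C(1-\delta)\int_0^\infty \mathcal P_{\delta,\tau}(U_t,\Omega)\,dt .
\]

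It remains to use the coarea identity for the fractional energy. Pointwise,
\[
\int_0^\infty|\chi_{U_t}(x)-\chi_{U_t}(y)|\,dt=\big||f(x)|-|f(y)|\big|\le|f(x)-f(y)|.
\]
By Tonelli, $\int_0^\infty\mathcal P_{\delta,\tau}(U_t,\Omega)\,dt\le\int_\Omega\int_{|x-y|<\tau d(y)}\frac{|f(x)-f(y)|}{|x-y|^{n+\delta}}\,dx\,dy$, which proves the claim with $C_3=Cq^{1/q}$; this constant is bounded since $1\le q\le n$.

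The main obstacle is the covering step. One must check that the constant in passing from $\sum (R_i)^{s(n-\delta)}$ to the $(n-\alpha)$-content stays uniform in $\delta$ and $q$. This holds because $5C_1\ge1$ is raised to a power at most $n$, and $\omega_{n-\alpha}$ is bounded over $[0,n]$. Measurability of $t\mapsto\mathcal P_{\delta,\tau}(U_t,\Omega)$ is routine: the function is measurable by Tonelli applied to the jointly measurable integrand.
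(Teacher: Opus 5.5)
Your proposal is correct and follows essentially the same route as the paper. Both arguments use Proposition~\ref{prop:sJ-box} to bound $\mathcal H^{n-\alpha}_\infty(U)^{1/q}$ by $C(1-\delta)\mathcal P_{\delta,\tau}(U,\Omega)$ for open $U$ with $U\cap B_0=\emptyset$, then apply Lemma~\ref{lem:q-integral} to the superlevel sets $\{|f|>t\}$, and finish with the pointwise layer-cake identity and Fubini. The only difference is cosmetic: you use $\sum a_i^q\le(\sum a_i)^q$ before taking roots, whereas the paper uses $(\sum b_i)^{1/q}\le\sum b_i^{1/q}$ after.
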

\begin{proof}
	By the assumptions on $s$ and $\delta$,  it is clear that 
	\begin{equation}\label{eq:estimate-q}
		1\leq q\leq \frac{n}{s(n-\delta)} <2\quad \text{and} \quad 1\leq sq\leq \frac{n}{n-\delta}<2.
	\end{equation}	
	Since $\alpha=n-qs(n-\delta)$, it further follows from the ranges of $\delta$, $s$ and $q$ that
	\begin{equation}\label{eq:estimate-n-alpha}
		n-1<n-\alpha=sq(n-\delta)\leq n \quad \text{and} \quad n-1<(n-\alpha)/q=s(n-\delta)\leq n.
	\end{equation}
	
	We first show that there is a constant $C^\prime=C^{\prime}(n, \tau, s, C_J, \diam\Omega)$
	so that  the following inequality
	\begin{equation}\label{estimate-A}
		{\mathcal{H}_\infty^{n-\alpha}(U)}^{1/q}\leq C^\prime (1-\delta) \int_\Omega \int_{\left\lvert y-z\right\rvert < \tau d(z)}\frac{\left\lvert \chi_{U}(z)-\chi_{U}(y)\right\rvert }{\left\lvert z-y\right\rvert^{n+\delta} }\,dy\,dz
	\end{equation}
	holds for any open set $U\subset \Omega$ with $U\cap B_0=\emptyset$. 
	
	For any open set $U\subset \Omega$ with $U\cap B_0=\emptyset$, it follows from {Proposition~\ref{prop:sJ-box}} 
	that there is a  countable collection of disjoint balls $\{B(x_i,R_i)\}_{i\in\IN}$ such that
	\[
	U \subset \bigcup_{i \in \mathbb{N}} B(x_i, 5 C_1R_i)
	\]
	and
	\[
	\sum_{i \in \mathbb{N}} R_i^{s(n - \delta)} \leq C_2 (1 - \delta) \mathcal P_{\delta, \tau}(U, \Omega).
	\]
	Then it follows from \eqref{eq:estimate-n-alpha} and the assumption $q\geq 1$  that
	\begin{align*}
		{\mathcal{H}_\infty^{n-\alpha}(U)}^{1/q}&\leq \left(\sum_{i=1}^{\infty}\omega_{n-\alpha} (5C_1R_i)^{n-\alpha}\right)^{1/q}\leq \sum_{i=1}^{\infty} \big(\omega_{n-\alpha}(5C_1 R_i)^{n-\alpha}\big)^{1/q}	\\
		&	\leq \omega_{n-\alpha}^{1/q} (5C_1)^n \sum_{i=1}^{\infty} R_i^{s(n-\delta)}	\leq 5^n \omega_{n-\alpha}^{1/q} C_1^n C_2(1-\delta) \mathcal P_{\delta, \tau}(U, \Omega)   \\
		&= 5^n \omega_{n-\alpha}^{1/q} C_1^n C_2(1-\delta)\int_{\Omega} \int_{\left\lvert y-z\right\rvert <\tau d(z)}\frac{\left\lvert \chi_{U}(z)-\chi_{U}(y)\right\rvert }{\left\lvert z-y\right\rvert^{n+\delta} }\,dy\,dz.
	\end{align*}
	By \eqref{eq:estimate-q} and \eqref{eq:estimate-n-alpha}, it is clear that  $ \omega_{n-\alpha}^{1/q}\leq C_\omega$ for some $C_\omega$ depending only on $n$. Therefore, the estimate \eqref{estimate-A} follows by taking $C^\prime=5^n C_\omega C_1^n C_2$. Since $C_1$ and $C_2$ are the constants in Proposition \ref{prop:sJ-box}, $C^\prime$ depends only on $n,$ $s,$ $\tau,$  $C_J$  and $\diam \Omega$.

	Let $\Omega_\lambda=\left\{x\in\Omega:\left\lvert f\right\rvert>\lambda  \right\}$ with $\lambda\geq 0$. It follows from Lemma~\ref{lem:q-integral} and \eqref{eq:estimate-q} that 
	\begin{align*}
		\left(\int_\Omega |f|^q\, d\mathcal{H}_\infty^{n-\alpha}\right)^{1/q}&=\left(\int_0^\infty \mathcal{H}_{\infty}^{n-\alpha}\big(\{x\in \Omega: |f(x)|^q>\lambda\}\big) \,d\lambda\right)^{1/q} \\
		&=q^{1/q}\left(\int_0^\infty  t^{q-1} \mathcal{H}_{\infty}^{n-\alpha}(\Omega_t) \,d\lambda\right)^{1/q}
		\leq q^{1/q}\int_{0}^{\infty}{\mathcal{H}_{\infty}^{n-\alpha}}(\Omega_t)^{1/q}\,dt\\
		&\leq 2\int_{0}^{\infty}\mathcal{H}_{\infty}^{n-\alpha}(\Omega_t)^{1/q}\,dt .
	\end{align*}
	Since $f\in C(\Omega)$ with $f|_{B_0}=0$, $\Omega_t$ is an open subset of $\Omega$ with $\Omega_t\cap B_0=\emptyset$ for any $t\geq 0$. Applying the estimate \eqref{estimate-A} to $\Omega_t$ in the above inequality,  we obtain that
	\begin{align}
		\left(\int_\Omega |f|^q\, d\mathcal{H}_\infty^{n-\alpha}\right)^{1/q}&
		\leq 2C^\prime(1-\delta) \int_{0}^{\infty}\int_{\Omega} \int_{\left\lvert y-z\right\rvert < \tau d(z)}\frac{\left\lvert \chi_{\Omega_t}(z)-\chi_{\Omega_t}(y)\right\rvert }{\left\lvert z-y\right\rvert^{n+\delta} }\,dy\,dz\,dt.\label{coarea}
	\end{align}
	
	For every $z\in \Omega$ and $y\in B(z, \tau d(z))$, we have
	\[\int_{0}^{\infty} \left\lvert \chi_{\Omega_t}(z)-\chi_{\Omega_t}(y)\right\rvert dt= \big\lvert|f(z)|-|f(y)| \big\rvert\leq |f(z)-f(y)|.\]
	Hence, for $y\not=z$, 
	\begin{equation} \label{fubini-pre}
		\int_{0}^{\infty}\frac{\left\lvert \chi_{\Omega_t}(z)-\chi_{\Omega_t}(y)\right\rvert }{\left\lvert z-y\right\rvert^{n+\delta} } \,dt  \leq \frac{|f(z)-f(y)|}{|z-y|^{n+\delta}}.
	\end{equation} 
	Therefore,	by applying the Fubini theorem to the estimate \eqref{coarea}, we obtain that
	\begin{equation*}
		\left(\int_\Omega |f|^q\, d\mathcal{H}_\infty^{n-\alpha}\right)^{1/q}\leq 2C^\prime(1-\delta) \int_{\Omega} \int_{\left\lvert y-z\right\rvert < \tau d(z)}\frac{\left\lvert f(z)-f(y)\right\rvert }{\left\lvert z-y\right\rvert^{n+\delta} }\,dy\,dz,
	\end{equation*}
	which gives \eqref{eq:sJ-WPI-B0} with $C_3=2C^\prime$.
\end{proof}
The main result of this section is the following theorem, which gives the functional Boxing inequality for continuous functions when $q=1$.
\begin{thm}\label{thm:sJ-WPI}
	Let $\delta, \tau \in (0, 1),$ $1\leq s\leq \frac{n}{n-\delta}$, $1\leq q\leq \frac{n}{s(n-\delta)}$, and $\alpha=n-qs(n-\delta)$. Suppose that $\Omega \subset \mathbb{R}^n$ is an $s$-distance John domain with John center $x_J$ and John constant $C_J \geq 1$.
	{Let $B_0=B(x_J, d(x_J)/18)$.}
	Then there exists  a constant $C_4=C_4(n, \tau, s, C_J, \diam\Omega)$
	such that for every $f\in C(\Omega)$,
	\begin{align}\label{eq:sJ-WPI}
\left(\int_\Omega \left | f-\vint {3B_0}f \right| ^q\, d\mathcal{H}_\infty^{n-\alpha}\right)^{1/q}
		\leq C_4(1-\delta)\int_\Omega \int_{\left\lvert x-y\right\rvert <\tau d(y)}\frac{\left\lvert f(x)-f(y)\right\rvert }{\left\lvert x-y\right\rvert^{n+\delta} }\,dx\,dy.
	\end{align}
\end{thm}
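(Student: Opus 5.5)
We reduce Theorem~\ref{thm:sJ-WPI} to Proposition~\ref{lem:sJ-WPI-B0} with a cutoff argument. Write $c:=\vint{3B_0} f$ and $g:=f-c$; then $g\in C(\Omega)$ and $[g]_{\mathcal W^{\delta,1}(\Omega)}=[f]_{\mathcal W^{\delta,1}(\Omega)}$. Let $\varphi\in C_c^\infty(2B_0)$ satisfy $0\le\varphi\le1$, $\varphi=1$ on $B_0$ and $|\nabla\varphi|\le C/r_0$, where $r_0=d(x_J)/18$. Split
\[
g=(1-\varphi)g+\varphi g .
\]
The function $(1-\varphi)g$ is continuous and vanishes on $B_0$, so Proposition~\ref{lem:sJ-WPI-B0} applies to it. By the quasi-sublinearity of the Choquet integral (applied to $|g|^q\le 2^{q-1}(|(1-\varphi)g|^q+|\varphi g|^q)$, with $q<2$), it remains to control two quantities:
\[
[(1-\varphi)g]_{\mathcal W^{\delta,1}(\Omega)}
\qquad\text{and}\qquad
\Big(\int_\Omega|\varphi g|^q\,d\mathcal H^{n-\alpha}_\infty\Big)^{1/q}.
\]
Both should be bounded by $C(1-\delta)^{-1}$ times $(1-\delta)$ times the energy restricted to $3B_0$.

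For the first quantity, use
\[
|(1-\varphi)g(x)-(1-\varphi)g(y)|\le |g(x)-g(y)|+|\varphi(x)-\varphi(y)|\,|g(y)| .
\]
The first term reproduces the energy. For the second, $\varphi(x)-\varphi(y)\neq 0$ forces $x$ or $y$ to lie in $2B_0$. Since $|x-y|<\tau d(y)$, both points then lie in a fixed ball $cB_0\Subset\Omega$, which we can shrink to $3B_0$ by adjusting the radii. Using $|\varphi(x)-\varphi(y)|\le C\min\{|x-y|/r_0,1\}$ and integrating in $x$ gives
\[
\int \frac{\min\{|x-y|/r_0,1\}}{|x-y|^{n+\delta}}\,dx\approx C r_0^{-\delta}\Big(\frac{1}{1-\delta}+\frac{1}{\delta}\Big).
\]
So this term is at most
\[
C r_0^{-\delta}\Big(\frac{1}{1-\delta}+\frac1\delta\Big)\int_{3B_0}|f-c|\,dy .
\]
The $1/\delta$ factor is harmless once multiplied by $(1-\delta)$, except as $\delta\to0$. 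To avoid it, I would only use the truncated part $|x-y|<\tau d(y)\le \diam\Omega$, which gives at most $\delta^{-1}r_0^{-\delta}$; the issue near $\delta\to 0$ remains, and it has to be absorbed into the constant as discussed below.

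The key remaining ingredient is a local fractional Poincaré inequality on the ball $3B_0$ with the BBM factor:
\[
\int_{3B_0}|f-f_{3B_0}|\,dy\le C(n)(1-\delta)\,r_0^{\delta}\int_{3B_0}\int_{3B_0}\frac{|f(x)-f(y)|}{|x-y|^{n+\delta}}\,dx\,dy .
\]
This is the Bourgain--Brezis--Mironescu/Ponce estimate on balls. It can also be derived from Lemma~\ref{lem:PS-lem} by a coarea argument over the level sets $\{f>t\}$ relative to the median. Combined with the previous step, it gives
\[
(1-\delta)\cdot\frac{1}{1-\delta}\cdot(1-\delta)\,[f]=(1-\delta)[f],
\]
which is what we want. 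The factor $1/\delta$ is bounded because we may localize to $|x-y|<r_0$, where $\min\{|x-y|/r_0,1\}=|x-y|/r_0$ and only the $1/(1-\delta)$ term appears. The far region $|x-y|\ge r_0$ contributes $C r_0^{-\delta}/\delta$, and this is the delicate point. I would handle it by noting that $3B_0$ has a fixed size, so the far-range cutoff contribution can be bounded using the same local Poincaré inequality. Because $n-1<s(n-\delta)$ is needed anyway, the regime of interest is $\delta$ bounded away from $0$ only if $s>1$. To treat all $\delta$, I would instead choose the cutoff width adapted to $\delta$, or note that $|\varphi(x)-\varphi(y)|\le 1$ with both points in $3B_0$ lets us use $|g(y)|\le |g(y)-g(x)|+|g(x)|$. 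The localization step and the sharp dependence of its constant on $\delta$ are the main obstacle.

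For the second quantity, $\varphi g$ is supported in $2B_0$, and the Choquet integral is bounded by covering $\{|\varphi g|>t\}\subset 2B_0$. I would bound it by applying Proposition~\ref{lem:sJ-WPI-B0}-type estimates on a ball, which is a $1$-John domain. An alternative is to bound $\mathcal H^{n-\alpha}_\infty$ on subsets of $2B_0$ by the same boxing argument run inside the ball $3B_0$, centered at a point of $3B_0\setminus 2B_0$. That argument yields the content bound in terms of the energy of $g$ on $3B_0$ together with the average term, which has already been controlled by the local Poincaré inequality. Collecting the constants, which depend on $r_0$ and therefore on $C_J$ and $\diam\Omega$, gives $C_4$.
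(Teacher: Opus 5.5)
Your architecture matches the paper's: a cutoff $\varphi\equiv 1$ on $B_0$ supported in $2B_0$, Proposition~\ref{lem:sJ-WPI-B0} applied to $(1-\varphi)g$, and the Lipschitz bound plus the BBM Poincar\'e inequality on $3B_0$ for the commutator term. However, the estimate of $[(1-\varphi)g]$ does not close as written, for three reasons.

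(a) The localization ``both points lie in $cB_0\Subset\Omega$, shrink to $3B_0$ by adjusting the radii'' fails for $\tau$ near $1$. The statement fixes the ball $3B_0$ over which you average, and Proposition~\ref{lem:sJ-WPI-B0} forces $\varphi\equiv1$ on $B_0$. Meanwhile $x\in 2B_0$ and $|x-y|<\tau d(y)$ only give $|x-y|<\frac{\tau}{1-\tau}d(x)$. So the commutator term contains $|g(y)|$ for $y$ well outside $3B_0$, and no Poincar\'e inequality on $3B_0$ controls it. The paper first replaces $\tau$ by $\tau_0'=\min\{\tau, d(x_J)/(20\diam\Omega)\}$, which is allowed because the right-hand side is monotone in $\tau$, and applies Proposition~\ref{lem:sJ-WPI-B0} with $\tau_0'$. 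Then $y\notin 3B_0$ forces $x\notin 2B_0$.

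(b) After this reduction, the ``$1/\delta$ obstacle'' you call the main difficulty does not exist. For $y\in 3B_0$ the $x$-integral runs over $|x-y|<\tau_0' d(y)\le 2d(x_J)$. The plain bound $|\varphi(x)-\varphi(y)|\le 36|x-y|/d(x_J)$ then gives $\int_{|x-y|<2d(x_J)}|x-y|^{1-n-\delta}\,dx\le \frac{2n\omega_n}{1-\delta}d(x_J)^{1-\delta}$, uniformly in $\delta\in(0,1)$. Your $\delta^{-1}r_0^{-\delta}$ is only a crude bound: in fact $\int_{r_0}^{\diam\Omega}t^{-1-\delta}\,dt\le r_0^{-\delta}\log(\diam\Omega/r_0)$.

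(c) This is the genuinely missing step. The BBM inequality bounds $\int_{3B_0}|f-f_{3B_0}|$ by the \emph{full} energy $\int_{3B_0}\int_{3B_0}\frac{|f(x)-f(y)|}{|x-y|^{n+\delta}}\,dx\,dy$. Your bookkeeping $(1-\delta)\cdot\frac{1}{1-\delta}\cdot(1-\delta)[f]$ silently identifies this with the truncated energy. For $x,y\in 3B_0$ you only know $|x-y|<d(x_J)/3$ and $d(y)>5d(x_J)/6$. So the full energy is dominated by the truncated one when $\tau\ge 2/5$, but not for small $\tau$, and in particular not for $\tau_0'$. The paper supplies this comparison by a chain. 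Take $m\ge 1/\tau_0'$ and $Q^k=\frac{k}{m}x+\frac{m-k}{m}y$, and telescope $|f(x)-f(y)|\le\sum_{k=1}^m|f(Q^k)-f(Q^{k-1})|$. Then change variables $(x,y)\mapsto(Q^k,Q^{k-1})$, which has Jacobian $m^{-n}$ and satisfies $|Q^k-Q^{k-1}|=|x-y|/m<\tau_0' d(Q^{k-1})$. This gives $\int_{3B_0}\int_{3B_0}\le m^{1-\delta}\int_\Omega\int_{|x-y|<\tau_0' d(y)}\frac{|f(x)-f(y)|}{|x-y|^{n+\delta}}\,dx\,dy$, and the factor $m^{1-\delta}\le m<2/\tau_0'$ is independent of $\delta$.

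For $\varphi g$ you take a different route from the paper. The paper uses H\"older's inequality for the Choquet integral to pass from exponent $q$ to $sq$, at the cost of a factor $\mathcal H^{n-\alpha}_\infty(\Omega)^{(s-1)/(sq)}$. This makes $n-\alpha=sq(n-\delta)$ match the Choquet--Poincar\'e inequality on balls of \cite[Corollary 4.3]{MPW24}, and the paper then applies the same chain. Your idea is to rerun Proposition~\ref{lem:sJ-WPI-B0} in the ball $3B_0$ with the John center at a point of $3B_0\setminus\overline{2B_0}$, where $\varphi g$ vanishes on the corresponding small ball. This is viable and avoids the external result. The ball is $s$-distance John with constants depending only on $s$ and $r_0$, and $d_{3B_0}\le d$ makes its truncated energy dominated by the one on $\Omega$. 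But this route again produces the commutator term and $\int_{3B_0}|f-f_{3B_0}|$, so it needs the same chain comparison from (c) before the proof closes.
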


\begin{proof}
	Set
	\begin{equation}\label{eq:tau-kappa}
		\kappa=\frac{20\,\diam \Omega}{d(x_J)} \quad  
		{\text{and}} 
		\quad \tau_0^{\prime}=\min\left\{\frac{1}{\kappa}, \tau\right\}. 
	\end{equation}
	Then it is clear that $3B_0\Subset \Omega\Subset \kappa B_0.$ Note that the energy on the right hand of \eqref{eq:sJ-WPI} is nondecreasing with respect to $\tau$. Thus it suffices to show that
	$$\left(\int_\Omega \left\lvert f-\vint{3B_0}f \right\rvert^q\, d\mathcal{H}_\infty^{n-\alpha}\right)^{1/q} \leq C_4(1-\delta)\int_\Omega \int_{\left\lvert x-y\right\rvert <\tau_0^{\prime} d(y)}\frac{\left\lvert f(x)-f(y)\right\rvert }{\left\lvert x-y\right\rvert^{n+\delta} }\,dx\,dy,$$
	where $C_4=C(n, \tau_0^{\prime}, s, C_J, \diam\Omega)$.

	Take a smooth function $\phi$ on $\Omega$ such that
	$$0\leq\phi\leq 1, \quad \phi|_{B_0}=1, \quad \Lip \phi \leq 36/d(x_J),$$
	and  the support of $\phi$ is contained in $2B_0$.  Then we decompose $f-f_{3B_0}$ as
	\begin{equation}\label{eq:decomposition}
		f-f_{3B_0}=\phi(f-f_{3B_0})+(1-\phi)(f-f_{3B_0})=:f_1+f_2.
	\end{equation}
	
	Since $q\geq 1$ and $|f_1+f_2|^q\leq 2^{q-1} (|f_1|^q+|f_2|^q)$, it follows   that
	\begin{align}\label{eq:norm-decomposition}
		\left(\int_\Omega \left\lvert f-\vint {3B_0}f  \right\rvert ^q\, d\mathcal{H}_\infty^{n-\alpha}\right)^{1/q}&\leq\left( 2^{q-1}\int_\Omega |f_1|^q+|f_2|^q\, d\mathcal{H}_\infty^{n-\alpha}\right)^{1/q}\notag\\
		&\leq 2\left(\int_\Omega |f_1|^q\, d\mathcal{H}_\infty^{n-\alpha}+\int_\Omega|f_2|^q\, d\mathcal{H}_\infty^{n-\alpha}\right)^{1/q}\notag\\
		&\leq 2 \left(\int_\Omega |f_1|^q\, d\mathcal{H}_\infty^{n-\alpha}\right)^{1/q}+2\left(\int_\Omega |f_2|^q\, d\mathcal{H}_\infty^{n-\alpha}\right)^{1/q}.
	\end{align}

	First we estimate the  term $\left(\int_\Omega |f_1|^q\, d\mathcal{H}_\infty^{n-\alpha}\right)^{1/q}$. By H\"older's inequality for the Choquet integral, we have
	\begin{equation}\label{eq:estimate-f-1-first}
		\left(\int_\Omega |f_1|^q\, d\mathcal{H}_\infty^{n-\alpha}\right)^{1/q}\leq 2^{1/q}\big(\mathcal {H}_\infty^{n-\alpha}(\Omega)\big)^{\frac{s-1}{sq}}\left(\int_{\Omega} |f_1|^{sq}\, d\mathcal{H}_\infty^{n-\alpha}\right)^{1/sq}.
	\end{equation}	
	Since
	$n,$ $\alpha$ and $q$ satisfy \eqref{eq:estimate-q} and \eqref{eq:estimate-n-alpha},
	there exists a constant $C_\omega^{\prime}$ depending only on $n$ and $\diam\Omega$ such that
	\begin{equation}\label{eq:estimate-content-holder}
		\left(\mathcal {H}_\infty^{n-\alpha}(\Omega)\right)^{\frac{1}{sq}}\leq \left(\mathcal {H}_\infty^{n-\alpha}\left(B(x_J, \diam\Omega)\right)\right)^{\frac{1}{sq}} =\omega_{n-\alpha}^{\frac{1}{sq}} \left(\diam\Omega\right)^{\frac{(n-\alpha)}{sq}} \leq C_\omega^{\prime}.
	\end{equation}
	From the decomposition \eqref{eq:decomposition}, we know that $|f_1|\leq |f-f_{3B_0}|$ and the support of $f_1$ is contained in $2B_0$ and hence in $3B_0$. Therefore, it follows from \eqref{eq:estimate-f-1-first} and \eqref{eq:estimate-content-holder} that
	\begin{align}\label{estimate-f1}
		\left(\int_\Omega |f_1|^q\, d\mathcal{H}_\infty^{n-\alpha}\right)^{1/q}&\leq 2^{1/q}(C_\omega^{\prime})^{s-1}\left(\int_{3B_0} |f_1|^{sq}\, d\mathcal{H}_\infty^{n-\alpha}\right)^{1/sq}\notag\\
		&\leq 2^{1/q}(C_\omega^{\prime})^{s-1}\left(\int_{3B_0} |f-f_{3B_0}|^{sq}\, d\mathcal{H}_\infty^{n-\alpha}\right)^{1/sq}.
	\end{align}
	By \cite[Corollary 4.3]{MPW24}, there exists a constant $C'$ depending only on $n$ such that
	\begin{align}\label{eq:estimate-f1}
		\left(\int_{3B_0} \left\lvert f-\vint {3B_0}f\right\rvert^{sq}\, d\mathcal{H}_\infty^{n-\alpha}\right)^{1/sq}\leq C'(1-\delta)\int_{3B_0} \int_{3B_0}\frac{\left\lvert f(x)-f(y)\right\rvert }{\left\lvert x-y\right\rvert^{n+\delta} }\,dx\,dy.
	\end{align}
	Plugging \eqref{eq:estimate-f1} into \eqref{estimate-f1}, we obtain from the fact $2^{1/q}\leq 2$ that
	\begin{equation}\label{eq:final-estimate-f1}
		\left(\int_\Omega |f_1|^q\, d\mathcal{H}_\infty^{n-\alpha}\right)^{1/q}\leq 2C'(C_\omega^{\prime})^{s-1}(1-\delta)\int_{3B_0} \int_{3B_0}\frac{\left\lvert f(x)-f(y)\right\rvert }{\left\lvert x-y\right\rvert^{n+\delta} }\,dx\,dy.
	\end{equation}
	
	Let $m$ be the smallest integer satisfying $m\geq 1/\tau_0^{\prime}$.
	For any $x,y \in 3B_0$ and $0\leq k\leq m$, let $Q^k_{x,y}=\frac{k}{m}x+\frac{m-k}{m}y$. Then  it is clear  that 
	\begin{equation}\label{Q-k}
		Q^k_{x,y}\in 3B_0\quad \text{and}\quad |Q^k_{x,y}-Q^{k-1}_{x,y}|=\frac{|y-x|}{m}\leq \frac{d(x_J)}{3m}.
	\end{equation}	
	Since $d(Q^k_{x,y})\geq d(x_J)-d(x_J)/6=5d(x_J)/6$, we have
	\begin{equation}\label{Q-k-1}
		|Q^k_{x,y}-Q^{k-1}_{x,y}|<\frac{d(Q^k_{x,y})}{m}\leq \tau_0^{\prime} d(Q^k_{x,y}).
	\end{equation}
	By the triangle inequality, 
	$$|f(x)-f(y)|\leq \sum_{k=1}^{m} \left\lvert f(Q^{k}_{x,y})-f(Q^{k-1}_{x,y})\right\rvert.$$	
	Therefore, we obtain
	$$\int_{3B_0}\int_{3B_0}\frac{\left\lvert f(x)-f(y)\right\rvert }{\left\lvert x-y\right\rvert^{n+\delta} }\,dx\,dy\leq \sum_{k=1}^{m}\int_{3B_0}\int_{3B_0}\frac{\left\lvert f(Q^{k}_{x,y})-f(Q^{k-1}_{x,y})\right\rvert }{\left\lvert x-y\right\rvert^{n+\delta} }\,dx\,dy.$$
	We change variables: $\tilde{x}=Q^{k}_{x,y}$ and $\tilde{y}=Q^{k-1}_{x,y}$.
	Then the Jacobian matrix is
	$$J = \begin{bmatrix}
		\frac{k}{m} I & \frac{m-k}{m} I \\
		\frac{k-1}{m} I & \frac{m-k+1}{m} I
	\end{bmatrix},$$
	where $I$ denotes the $n\times n$ identity matrix.
	A direct computation gives that the absolutely value of the determinant of $J$ is $1/m^n$. Then it follows from \eqref{Q-k} and \eqref{Q-k-1} that 
	\begin{align}\label{estimat-f1-1}
		\int_{3B_0}\int_{3B_0}\frac{\left\lvert f(x)-f(y)\right\rvert }{\left\lvert x-y\right\rvert^{n+\delta} }\,dx\,dy&\leq  m^{1-\delta} \int_{3B_0} \int_{\left\lvert \tilde{x}-\tilde{y}\right\rvert <\tau d(\tilde{y})}\frac{\left\lvert f(\tilde{x})-f(\tilde{y})\right\rvert }{\left\lvert \tilde{x}-\tilde{y}\right\rvert^{n+\delta} }\,d\tilde{x}\,d\tilde{y}\notag\\
		&\leq \frac{2}{\tau_0^{\prime}} \int_{\Omega} \int_{\left\lvert \tilde{x}-\tilde{y}\right\rvert <\tau d(\tilde{y})}\frac{\left\lvert f(\tilde{x})-f(\tilde{y})\right\rvert }{\left\lvert \tilde{x}-\tilde{y}\right\rvert^{n+\delta} }\,d\tilde{x}\,d\tilde{y},
	\end{align}
	because of the fact $1\leq m<2/\tau_0^{\prime}$ and $\delta\in (0, 1)$.
	Plugging the estimate \eqref{estimat-f1-1} into \eqref{eq:final-estimate-f1}, we obtain 
	\begin{equation}\label{eq:estimate-f-1}
		\left(\int_\Omega |f_1|^q\, d\mathcal{H}_\infty^{n-\alpha}\right)^{1/q}\leq \frac{4C'(C_{\omega}^{\prime})^{s-1}}{\tau_0^{\prime}} (1-\delta) \int_\Omega \int_{\left\lvert x-y\right\rvert <\tau d(y)}\frac{\left\lvert f(x)-f(y)\right\rvert }{\left\lvert x-y\right\rvert^{n+\delta} }\,dx\,dy.
	\end{equation}	
	
	Next we estimate the term  $\left(\int_\Omega |f_2|^q\, d\mathcal{H}_\infty^{n-\alpha}\right)^{1/q}$. From the decomposition \eqref{eq:decomposition}, we have the fact that  $f_2|_{B_0}=0$ and $f_2| _{\Omega \setminus 2B_0}=f-f_{3B_0}.$
	By applying Proposition \ref{lem:sJ-WPI-B0} to $f_2$, we have 
	\begin{align}\label{estimate-f2}
		\left(\int_\Omega |f_2|^q\, d\mathcal{H}_\infty^{n-\alpha}\right)^{1/q}&\leq C_3 (1-\delta) \int_{\Omega}\int_{|x-y|< \tau_0^{\prime} d(y)} \frac{\left\lvert f_2(x)-f_2(y)\right\rvert }{\left\lvert x-y\right\rvert^{n+\delta}}\, dx\,dy\notag\\
		&= C_3 (1-\delta) \int_{\Omega\setminus 3B_0}\int_{|x-y|< \tau_0^{\prime} d(y)} \frac{\left\lvert f_2(x)-f_2(y)\right\rvert }{\left\lvert x-y\right\rvert^{n+\delta}}\, dx\,dy\notag\\
		&\quad\quad+ C_3 (1-\delta) \int_{ 3B_0}\int_{|x-y|< \tau_0^{\prime} d(y)} \frac{\left\lvert f_2(x)-f_2(y)\right\rvert }{\left\lvert x-y\right\rvert^{n+\delta}}\, dx\,dy=:I_1+I_2,
	\end{align}
	where $C_3=C_3(n,\tau_0^{\prime},s,C_J,\diam\Omega)$ is the constant in Proposition \ref{lem:sJ-WPI-B0}. 
	
	To estimate $I_1$, for any $y\in \Omega\setminus 3B_0$, the choice of $\tau_0^{\prime}$ in \eqref{eq:tau-kappa} implies 
	$$\tau_0^{\prime} d(y)\leq \tau_0^{\prime}\,\diam\Omega\leq 
	{\frac{\diam\Omega}{\kappa}}
	=\frac{d(x_J)}{20}<\frac{d(x_J)}{18}.$$
	Hence every $x\in \Omega$ with $|x-y|<\tau_0^{\prime} d(y)$ belongs to $\Omega\setminus 2B_0$. Therefore, by using the fact that $f_2| _{\Omega \setminus 2B_0}=f-f_{3B_0}$, we obtain that
	\begin{align}\label{eq:estimate-I_1}
		I_1&= C_3  (1-\delta) \int_{\Omega\setminus 3B_0}\int_{|x-y|< \tau_0^{\prime} d(y)} \frac{\left\lvert f(x)-f(y)\right\rvert }{\left\lvert x-y\right\rvert^{n+\delta}}\, dx\,dy\notag\\
		&\leq C_3  (1-\delta) \int_{\Omega}\int_{|x-y|< \tau_0^{\prime} d(y)} \frac{\left\lvert f(x)-f(y)\right\rvert }{\left\lvert x-y\right\rvert^{n+\delta}}\, dx\,dy.
	\end{align}
	
	Finally, we estimate $I_2$. By the triangle inequality, we have that
	\begin{align*}
		|f_2(y)-f_2(x)|&\leq |(1-\phi(y))(f(y)-f_{3B_0})-(1-\phi(x))(f(y)-f_{3B_0})|\\
		&\qquad\qquad+|(1-\phi(x))(f(y)-f_{3B_0})-(1-\phi(x))(f(x)-f_{3B_0})|\\
		&=|f(y)-f_{3B_0}||\phi(x)-\phi(y)|+|1-\phi(x)||f(x)-f(y)|.
	\end{align*}
	Since $0\leq \phi(x)\leq 1$ and $\Lip \phi\leq 36/d(x_J)$, we obtain that
	$$	|f_2(y)-f_2(x)| \leq \frac{36|f(y)-f_{3B_0}|}{d(x_J)}|x-y|+|f(x)-f(y)|.$$
	Consequently, by the Fubini theorem,
	\begin{align}\label{estimate-f2-2}
		I_2&\leq \frac{36 C_3(1-\delta)}{d(x_J)}\int_{3B_0}|f(y)-f_{3B_0}|\,dy\int_{|x-y|<\tau_0^{\prime} d(y)} \frac{1}{|x-y|^{n+\delta-1}}\, dx\notag\\
		&\quad\quad\qquad+ C_3(1-\delta) \int_{ 3B_0}
		{\int_{|x-y|< \tau_0' d(y)}} 
		\frac{\left\lvert f(x)-f(y)\right\rvert }{\left\lvert x-y\right\rvert^{n+\delta}}\,  dx\,dy=: I_2^1+I_2^2.
	\end{align}
	
	It is clear that
	\begin{equation}\label{eq:I_2-2}
		I_2^2\leq C_3(1-\delta) \int_{\Omega}
			{\int_{|x-y|< \tau_0' d(y)}} 
		\frac{\left\lvert f(x)-f(y)\right\rvert }{\left\lvert x-y\right\rvert^{n+\delta}}\,  dx\,dy.
	\end{equation}
	
	For the estimate of $I_2^1$, since $d(y)\leq d(x_J)+|y-x_J|\leq 2 d(x_J)$, we have 
	\begin{equation}\label{eq:x-y}
		\int_{|x-y|<\tau_0^{\prime} d(y)} \frac{1}{|x-y|^{n+\delta-1}}\, dx\leq \int_{0}^{2d(x_J)} \frac{n\omega_n}{t^{\delta}}\, dt \leq \frac{2}{1-\delta}n\omega_n d(x_J)^{1-\delta}.
	\end{equation}
	Moreover, by \cite[page 80]{BBM02}, there exists a constant $C''$ depending only on $n$ such that
	\begin{align}\label{estimate-f2-1}
		\int_{3B_0}|f(y)-f_{3B_0}|\,dy\leq C''d(x_J)^{\delta}(1-\delta)\int_{3B_0} \int_{3B_0}\frac{\left\lvert f(x)-f(y)\right\rvert }{\left\lvert x-y\right\rvert^{n+\delta} }\,dx\,dy.
	\end{align}
	By	setting
	\begin{equation}\label{eq:C'_4}
	C'_4=72 n\omega_n C_3C'',
	\end{equation}
	it follows from \eqref{estimat-f1-1}, \eqref{eq:x-y} and \eqref{estimate-f2-1} that
	$$I_2^1\leq C'_4(1-\delta) \int_{3B_0} \int_{3B_0}\frac{\left\lvert f(x)-f(y)\right\rvert }{\left\lvert x-y\right\rvert^{n+\delta} }\,dx\,dy\leq \frac{2C'_4}{\tau_0^{\prime}} (1-\delta) \int_\Omega \int_{\left\lvert x-y\right\rvert <\tau_0^{\prime} d(y)}\frac{\left\lvert f(x)-f(y)\right\rvert }{\left\lvert x-y\right\rvert^{n+\delta} }\,dx\,dy.$$
	Plugging this and \eqref{eq:I_2-2} into \eqref{estimate-f2-2}, we get
	\begin{equation}\label{eq:final-I-2}
		I_2\leq \left(C_3+\frac{2C'_4}{\tau_0^{\prime}}\right)(1-\delta)\int_\Omega \int_{\left\lvert x-y\right\rvert <\tau_0^{\prime} d(y)}\frac{\left\lvert f(x)-f(y)\right\rvert }{\left\lvert x-y\right\rvert^{n+\delta} }\,dx\,dy.
	\end{equation}
	Combining \eqref{eq:norm-decomposition}, \eqref{eq:estimate-f-1}, \eqref{estimate-f2}, \eqref{eq:estimate-I_1} and \eqref{eq:final-I-2}, we have
	\begin{equation}\label{f-estimate-f}
		\left(\int_\Omega \left\lvert f-\vint {3B_0}f\right\rvert^q\, d\mathcal{H}_\infty^{n-\alpha}\right)^{1/q} \leq C_4(1-\delta)\int_\Omega \int_{\left\lvert x-y\right\rvert <\tau_0^{\prime} d(y)}\frac{\left\lvert f(x)-f(y)\right\rvert }{\left\lvert x-y\right\rvert^{n+\delta} }\,dx\,dy,
	\end{equation}
	where $C_4=4C_3+(8C'(C_\omega^{\prime})^{s-1}+4C'_4)/\tau_0^{\prime}$ depends only on  $n$, $\tau_0^{\prime}$, $s$, $C_J,$ $d(x_J)$ and $\diam\Omega$. Therefore, the proof is complete.
\end{proof}

	\begin{rem}\label{rem:C4}
	It is noteworthy that in Theorem \ref{thm:sJ-WPI}, when \(s = 1\), the constant \(C_4\)  in \eqref{eq:sJ-WPI} is independent of \(\diam\Omega\),
	which can be seen as follows.
	We know that \(C_4 = 4C_3 + (8C'(C_\omega')^{s-1} + 4C_4')/\tau_0'\). When \(s = 1\),
	\(\diam\Omega\) and
	\(d(x_J)\)  are comparable; thus, by \eqref{eq:tau-kappa}, \(\tau_0'\) is independent of \(\diam\Omega\). From the proof of Proposition \ref{lem:sJ-WPI-B0}, we have
	\(C_3 = 2 \times 5^n C_\omega C_1^n C_2\).
	According to Remark \ref{rem:C_1}, \(C_1\) is independent of \(\diam\Omega\) when \(s = 1\),
while $C_\omega$ and $C_2$ only depend on $n$.
	Then by \eqref{eq:C'_4} we know that $C_4^{\prime}$ is independent of \(\diam\Omega\).
		Consequently, \(C_4\) is independent of \(\diam\Omega\) when \(s = 1\).
		
		{We can also} understand why \(C_4\) is independent of \(\diam\Omega\) when \(s = 1\) directly from \eqref{eq:sJ-WPI} itself. In fact, it suffices to consider the case \(q = 1\), with the justification as follows: 
		denoting $\Omega_\lambda=\left\{x\in\Omega:\left\lvert f(x)\right\rvert>\lambda  \right\}$,
		combining Lemma \ref{lem:q-integral}, \eqref{eq:estimate-q} and 
		\cite[Lemma 10.11]{B-P16} we obtain the relation
	\begin{align}\label{Lq controlled by L1}
	\left(\int_\Omega |f|^q \, d\mathcal{H}_\infty^{n-\alpha}\right)^{1/q} 
	&=\left(q\int_0^\infty\lambda^{q-1}\mathcal{H}_\infty^{n-\alpha}(\Omega_\lambda)\, d\lambda\right)^{1/q}\notag\\
	&\leq 2\int_{0}^{\infty} \left(\mathcal{H}_\infty^{n-\alpha}(\Omega_t)\right)^{1/q} \,dt \leq 2\int_{0}^{\infty} \mathcal{H}_\infty^{n-\delta}(\Omega_t) \,dt
	=2\int_\Omega |f| \, d\mathcal{H}_\infty^{n-\delta}.
	\end{align}
	Thus, we only need to focus on the case \(q = 1\). When \(q = 1\)  and \(s = 1\), inequality \eqref{eq:sJ-WPI} is scaling-invariant and \(C_4\) is therefore independent of \(\diam\Omega\) for \(s = 1\). 
\end{rem}

\section{\large Fine properties of fractional Sobolev functions}\label{Chapter-4}
In this section, we show that for any bounded domain $\Omega$, smooth functions are dense in $\mathcal {W}^{\delta,1}(\Omega)$ and the exceptional set for
the Lebesgue points of
a function $f\in \mathcal {W}^{\delta,1}(\Omega)$
has $(n-\delta)$-dimensional Hausdorff content zero. 

 A point $x \in \Omega$ is called a Lebesgue point of a function $f \in L^1(\Omega)$ if there exists $a \in \mathbb{R} $ such that
\[
\lim_{r \to 0} \, \vint{B_r(x)} |f(y) - a|\, dy = 0.
\]
Denote by $\mathcal{L}_f$ the set of Lebesgue points of $f$ and we call the set $\Omega\setminus \mathcal{L}_f$ the {\it exceptional set} of $f$. Assigning the value $a$ at the point $x \in \mathcal{L}_f$ yields a function $f^*: \mathcal{L}_f \to \mathbb{R}$ $ (\text{i.e.}, f^*(x) := a)$, called the {\it precise representative} of $f$. Consequently, we have
\[
\lim_{r \to 0} \, \vint{B_r(x)} |f(y) - f^*(x)|\, dy = 0,
\]
and hence
\[
\lim_{r \to 0} \, \vint{B_r(x)} f(y)\, dy = f^*(x).
\]

For any $\delta\in (0,1)$, the $(\delta,1)$-capacity of a compact set $K \subset \mathbb{R}^n$ is defined as:
$$\mathrm{Cap}_{\delta,1}(K) := \inf\left\{ [\varphi]_{W^{\delta,1}(\mathbb{R}^n)} : \varphi \in C_c^\infty(\mathbb{R}^n),\ \varphi\geq 0, \text{ and } \varphi > 1 \text{ on } K \right\}.$$
For a general set $A \subset \mathbb{R}^n$, its capacity  is defined via regularity, for more details see \cite{PS20}.  Capacity is an outer measure commonly used in the study of partial differential equations, often employed to characterize the size
of $\Omega\setminus\mathcal{L}_f$. By \cite[Theorem 2.1]{PS20}, we know that $\mathrm{Cap}_{\delta,1}$ and $\mathcal{H}^{n-\delta}_{\infty}$ are comparable.
Therefore, it is reasonable for us to use $\mathcal{H}^{n-\delta}_{\infty}$ to characterize the size
of \(\Omega\setminus\mathcal{L}_f\) for $f\in \mathcal W^{\delta, 1}(\Omega)$.



It is known that smooth functions are dense in the fractional Sobolev space $W^{\delta, 1}(\Omega)$; see \cite[Theorem 6.65]{Leo23}. We  extend this density result to the improved fractional Sobolev space $\mathcal W^{\delta, 1}(\Omega)$.

\begin{prop}\label{prop:fracSob-dense}
	Let $\Omega \subset \IR^n$ be a bounded domain and $\delta, \tau \in (0,1).$ For every $f\in \mathcal {W}^{\delta,1}(\Omega),$ there exists a sequence $\left\{f_j\right\}$ in $\mathcal {W}^{\delta,1}(\Omega) \cap C^{\infty}(\Omega)$ such that 
	\[
	\left\lVert f-f_j\right\rVert _{\mathcal {W}^{\delta,1}(\Omega)}\rightarrow 0\quad \text{as}\quad j\rightarrow \infty.
	\]
	In other words, $\mathcal {W}^{\delta,1}(\Omega) \cap C^{\infty}(\Omega)$ is dense in $\mathcal {W}^{\delta,1}(\Omega).$
\end{prop}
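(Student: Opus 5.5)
We follow the Meyers--Serrin partition-of-unity scheme, adapted to the truncated kernel. Fix $f\in\mathcal W^{\delta,1}(\Omega)$ and $\varepsilon>0$. Take the exhaustion $\Omega_k=\{x\in\Omega: d(x)>1/k\}$, $k\ge1$ (with $\Omega_0=\Omega_{-1}=\emptyset$). Let $V_k=\Omega_{k+1}\setminus\overline{\Omega_{k-1}}$, and let $\{\psi_k\}$ be a smooth partition of unity on $\Omega$ subordinate to $\{V_k\}$. Each point lies in at most three of the $V_k$, and $\Lip\psi_k\le Ck$ since $\psi_k$ transitions on scale $\sim 1/k^2$. (Only finiteness of $\Lip\psi_k$ will matter.) Choose mollification radii $\varepsilon_k>0$ so small that $\operatorname{spt}\big((\psi_kf)*\rho_{\varepsilon_k}\big)\subset V_k$. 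Set
\[
f_\varepsilon=\sum_k (\psi_kf)*\rho_{\varepsilon_k}\in C^\infty(\Omega),\qquad f-f_\varepsilon=\sum_k g_k,\quad g_k:=\psi_kf-(\psi_kf)*\rho_{\varepsilon_k}.
\]
The $L^1$ part is standard: require $\|g_k\|_{L^1}<\varepsilon 2^{-k}$.

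For the energy we use subadditivity, $[f-f_\varepsilon]_{\mathcal W^{\delta,1}(\Omega)}\le\sum_k[g_k]_{\mathcal W^{\delta,1}(\Omega)}$, and it suffices to make each $[g_k]<\varepsilon2^{-k}$. The key observation is locality of the truncated kernel. If $y\in\Omega$ and $|x-y|<\tau d(y)$, then $d(x)\in\big((1-\tau)d(y),(1+\tau)d(y)\big)$. Since $g_k$ is supported in $V_k$, where $d\sim 1/k$, the pairs $(x,y)$ contributing to $[g_k]$ lie in a compact set $K_k\Subset\Omega$ with $d\ge c_\tau/k$ there. So it suffices to show $[h-h*\rho_\eta]_{W^{\delta,1}(\mathbb R^n)}\to0$ as $\eta\to0$ for $h=\psi_kf$ extended by zero, restricted to pairs with $|x-y|<\tau d(y)$.

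The main obstacle is verifying that $h=\psi_k f$ has finite full energy on $\mathbb R^n$ once we restrict to these pairs. We split
\[
|h(x)-h(y)|\le\psi_k(y)|f(x)-f(y)|+|f(x)||\psi_k(x)-\psi_k(y)|.
\]
The first term is bounded by the truncated energy of $f$. The second term is at most $Ck\,|f(x)|\,|x-y|^{1-n-\delta}$, and integrating in $y$ over $|x-y|\lesssim d(x)$ gives $Ck(1-\delta)^{-1}d(x)^{1-\delta}|f(x)|$, which is integrable. Pairs with $y\in\operatorname{spt}\psi_k$ but far from $x$ are excluded by the truncation. Pairs where $x$ is near $\operatorname{spt}\psi_k$ but $y$ is not are handled by the symmetric reverse containment, since $d(x)$ and $d(y)$ are comparable. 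Pairs with neither point in the support contribute zero.

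Thus $h$ belongs to $W^{\delta,1}$ on a neighbourhood in the relevant truncated sense. Continuity of translations in $L^1$ applied to $(x,y)\mapsto (h(x)-h(y))/|x-y|^{n+\delta}$ on this region, combined with Minkowski's integral inequality, gives $[h*\rho_\eta-h]\to0$ as $\eta\to0$. This lets us choose $\varepsilon_k$, and summing over $k$ gives $\|f-f_\varepsilon\|_{\mathcal W^{\delta,1}(\Omega)}\le2\varepsilon$. Finiteness of $[f_\varepsilon]$ then follows from the triangle inequality, so $f_\varepsilon\in\mathcal W^{\delta,1}(\Omega)\cap C^\infty(\Omega)$.
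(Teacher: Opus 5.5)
Your scheme is the paper's: the same exhaustion $\{d>1/k\}$, the same Meyers--Serrin partition of unity, and the same splitting $|h(x)-h(y)|\le \psi_k(y)|f(x)-f(y)|+|f(x)||\psi_k(x)-\psi_k(y)|$ to show that $\psi_k f$ has finite truncated energy (the paper's Claim 1). The gap is in the step you treat as routine, namely the convergence $[\psi_kf-(\psi_kf)*\rho_\eta]_{\mathcal W^{\delta,1}(\Omega)}\to0$. Write $F(x,y)=(h(x)-h(y))/|x-y|^{n+\delta}$ and $R=\{|x-y|<\tau d(y)\}$. Minkowski's inequality gives
\[
\iint_{R}|F_\eta-F|\,dx\,dy\le\int\rho_\eta(z)\iint_R|F(x-z,y-z)-F(x,y)|\,dx\,dy\,dz .
\]
However, $R$ is not invariant under the diagonal shift $(x,y)\mapsto(x-z,y-z)$. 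The shifted pairs range over $\{|x-y|<\tau d(y+z)\}$, which includes a shell $\tau d(y)\le|x-y|<\tau d(y)+\tau|z|$ lying outside $R$. Continuity of translations in $L^1$ therefore requires $F$ to be integrable on a region that contains all small diagonal shifts of $R$, or on all of $\IR^{2n}$. You only verified integrability on $R$ itself, and you explicitly discarded the far pairs ``by the truncation''. So ``continuity of translations applied on this region'' is not justified as written. This is exactly the point the paper addresses with its control function $g(h)$, built on the enlarged condition $|h|<\tau d(x)+\tau/(3k+3)$, together with the separate estimate of $T_2$.

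The repair is short, and it makes your route work; it is arguably cleaner than the paper's dominated-convergence argument. Show that $h=\psi_kf\in W^{\delta,1}(\IR^n)$. Put $r_0=\tau/(2(k+1))$.

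Take any pair with $|x-y|<r_0$ and $h(x)\ne h(y)$. One of its points lies in $\spt\psi_k\subset\{d>1/(k+1)\}$, and then a direct check shows the pair lies in $R$ (with both points in $\Omega$). So these pairs are controlled by your truncated estimate.

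The pairs with $|x-y|\ge r_0$ contribute at most $2\|h\|_{L^1}\,n\omega_n r_0^{-\delta}/\delta$.

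Hence $F\in L^1(\IR^{2n})$. The standard translation-plus-Minkowski argument then gives $[h*\rho_\eta-h]_{W^{\delta,1}(\IR^n)}\to0$, and this full seminorm dominates the truncated one.
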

\begin{proof}
	Let $f\in \mathcal {W}^{\delta,1}(\Omega)$, $\Omega^0=\emptyset$ and 
	\[
	\Omega^{k}:=\left\{x\in \Omega: d(x)>\frac{1}{k}\right\}, \quad k=1,2,\dots.
	\]
	Set
	\[
	U_k=\Omega^{k+1}\setminus\overline{\Omega^{k-1}},\quad k=1,2,\dots.
	\]
	As every $U_k$ is open  and
	\[
	\bigcup_{k=1}^{\infty} U_k=\Omega,
	\]
	there exists a sequence of smooth function $\{\phi_k\}_{k=1}^{\infty}$ such that 
	\begin{equation*}
		\begin{cases}
			\phi_k\in C_{c}^{\infty}(U_k),\ 0\leq \phi_k\leq 1, \quad k=1,2,\dots,\\
			\sum_{k=1}^{\infty}\phi_k=1 ~\text{on}~ \Omega.
		\end{cases}
	\end{equation*}

	\noindent{\bf Claim 1:} For each $k=1,2,\dots,$ we have $f\phi_k\in \mathcal {W}^{\delta,1}(\Omega)$ and $\spt(f\phi_k)\subset U_k$.
	\smallskip
	
	\noindent{\it Proof of Claim 1:}
	By the properties of $\phi_k$, it is clear that $f\phi_k\in L^1(\Omega)$ and $\spt(f\phi_k)\subset U_k$. It remains to show that $[\phi_k f]_{\mathcal {W}^{\delta,1}(\Omega)}<\infty$. By the triangle inequality, a direct computation gives that
	\begin{align*}
		[\phi_k f]_{\mathcal {W}^{\delta,1}(\Omega)}	=&	\int_{\Omega}\int_{|x-y|<\tau d(y)}\frac{\left\lvert \phi_{k}(x)f(x)-\phi_{k}(y)f(y)\right\rvert }{\left\lvert x-y\right\rvert^{n+\delta} }\,dx\,dy\\
		\leq	& \int_{\Omega}\int_{|x-y|<\tau d(y)}\frac{\left\lvert \phi_{k}(x)f(x)-\phi_{k}(x)f(y)\right\rvert }{\left\lvert x-y\right\rvert^{n+\delta} }\,dx\,dy\\
		&\qquad+\int_{\Omega}\int_{|x-y|<\tau d(y)}\frac{\left\lvert \phi_{k}(x)f(y)-\phi_{k}(y)f(y)\right\rvert }{\left\lvert x-y\right\rvert^{n+\delta} }\,dx\,dy=:I_1+I_2.
	\end{align*}
	Since $0\leq \phi\leq 1$, we have
	$$I_1\leq \int_{\Omega}\int_{|x-y|<\tau d(y)}\frac{\left\lvert f(x)-f(y)\right\rvert }{\left\lvert x-y\right\rvert^{n+\delta} }\,dx\,dy=[f]_{\mathcal {W}^{\delta,1}(\Omega)}<\infty.$$
	Since $\phi_k\in C_{c}^{\infty}(U_k)$, it is a Lipschitz function and hence there exists a constant $M$ 
	such that
	$$|\phi_k(x)-\phi_k(y)|\leq M|x-y| \quad \text{for any } x, y\in \Omega.$$
	Consequently, by the Fubini theorem,
	$$I_2\leq \int_\Omega |f(y)|\,dy\int_{|x-y|<\tau d(y)} \frac{M}{|x-y|^{n-1+\delta}}\, dx.$$
	Using the fact that $\tau d(y)<\diam\Omega$ for any $y\in\Omega$, it is clear that
	$$\int_{|x-y|<\tau d(y)} \frac{M}{|x-y|^{n-1+\delta}}\, dx\leq \int_0^{\diam\Omega} \frac{Mn\omega_{n}}{r^{\delta}}\, dr=\frac{Mn\omega_{n}}{1-\delta}(\diam\Omega)^{1-\delta}<\infty.$$
	Together with $f\in L^1(\Omega)$, this shows that $I_2<\infty$. Therefore, we obtain that  $[\phi_k f]_{\mathcal {W}^{\delta,1}(\Omega)}<\infty$ and Claim $1$ holds.
	
	\smallskip

	Let $\eta _{\epsilon}$ be a standard mollifier. For any function $g\in L_{\rm loc}^1(\Omega)$, define 
	\[
	g^{\epsilon}(x):=\eta _{\epsilon}\ast g:=\int_{\Omega}\eta _{\epsilon}(x-y)g(y)\,dy=\int_{B(0,1)}\eta(y)g(x-\epsilon y)\,dy, \quad x\in\Omega_\epsilon,
	\]
	where $\Omega_{\epsilon}=\{x\in \Omega: d(x)>\epsilon\}$. For more details and properties of mollifiers, we refer to \cite[Section 4.2]{EGbook}. Moreover, for any function $g\in L_{\rm loc}^1(\mathbb R^n)$ and any $h\in \mathbb R^n$, let 
	$$\Delta _h g(x)=g(x)-g(x-h).$$
	By the Fubini theorem, a direct computation gives that
	\begin{align}\label{eq:rewrite-norm}
		[g]_{\mathcal {W}^{\delta,1}(\Omega)}&=\int_{\Omega}\int_{|x-y|<\tau d(x)}\frac{|g(x)-g(y)|}{|x-y|^{n+\delta}}\, dy\,dx=\int_{\Omega}\int_{|h|<\tau d(x)}\frac{|\Delta_h g(x)|}{|h|^{n+\delta}}\, dh\,dx\notag\\
		&=\int_{\mathbb R^n}\int_{\Omega_{|h|/\tau}}\frac{|\Delta_h g(x)|}{|h|^{n+\delta}}\,dx\,dh.
	\end{align}
	
	\smallskip
	\noindent{\bf Claim 2:}  For each $k=1,2,\dots,$  we have
	$$
	\lim_{\epsilon\rightarrow 0^+}\left\lVert f\phi_k-(f\phi_k)^\epsilon\right\rVert _{\mathcal {W}^{\delta,1}(\Omega)}=0.
	$$
	
	\noindent{\it Proof of Claim 2:} 
	To simplify the notation, let
	\[
	f_{k}:=f\phi_k\quad\text{and}\quad 	f^\epsilon_{k}:=(f\phi_k)^\epsilon.
	\]
	Since $\spt(f_k)\subset U_{k}$,
	we can extend $f_k$ to $\mathbb R^n$ by using zero extension and then $\|f_k\|_{L^1(\mathbb R^n)}=\|f_k\|_{L^1(\Omega)}<\infty$. Moreover, it follows from the properties of mollifiers (cf. \cite[Theorem 4.1]{EGbook}) that $\spt(f^\epsilon_k)\subset U_{k}$ for sufficiently small $\epsilon>0$ and 
	\begin{align}\label{eq:claim2-l1-estimate}
		\lim_{\epsilon\rightarrow 0^+}	\left\lVert f_k-f^\epsilon_k\right\rVert _{L^1{(\IR^{n})}}=\lim_{\epsilon\rightarrow 0^+}	\left\lVert f_k-f^\epsilon_k\right\rVert _{L^1{(\Omega)}}= 0.
	\end{align} 
	{Since $\Delta_h (f_k-f_k^\epsilon)=\Delta_h f_k-\Delta_h f^\epsilon_k$, }we obtain from \eqref{eq:rewrite-norm} that
	\begin{align*}
		[f_k-f_k^\epsilon]_{\mathcal {W}^{\delta,1}(\Omega)}&=\int_{\IR^n}\int_{\Omega_{|h|/\tau}}\frac{\left\lvert \Delta _h f_k(x) -\Delta_h f_k^{\epsilon}(x)\right\rvert }{|h|^{n+\delta}}\,dx \,dh.
	\end{align*}
	Let 
	$$g_\epsilon(h)=\int_{\Omega_{|h|/\tau}}\frac{\left\lvert \Delta _h f_k(x) -\Delta_h f_k^{\epsilon}(x)\right\rvert }{|h|^{n+\delta}}\,dx.$$
	By \eqref{eq:claim2-l1-estimate}, for every $h\in \IR^n\setminus \{0\}$, we have
	\begin{align*}
		|g_\epsilon(h)|\leq \frac{1}{|h|^{n+\delta}} \int_{\IR^n}{\left\lvert \Delta_h f_k(x) -\Delta_h f_k^\epsilon(x)\right\rvert }\,dx \leq \frac{2}{|h|^{n+\delta}} \|f_k-f_k^\epsilon\|_{L^1(\mathbb R^n)}\rightarrow 0\quad \text{as } \epsilon\rightarrow 0.
	\end{align*}
	Therefore, Claim 2 follows by the Lebesgue dominated convergence theorem, once we find a control function $g\in L^1(\mathbb R^n)$ such that for sufficiently small $\epsilon>0$,
	\begin{equation}\label{control}
		|g_\epsilon (h)|\leq g(h)\quad \text{for a.e. } h\in \mathbb R^n.
	\end{equation}
	
	We claim that the function 
	\begin{align*}
		g(h):=\frac{2}{|h|^{n+\delta}}\int_{\Omega}\left\lvert \Delta_h f_k(x) \right\rvert \chi_{\{|h|< \tau d(x)+\tau/(3k+3) \}}(h)\,dx
	\end{align*}
	is the one we need. 
	First, we verify that $g$ satisfies \eqref{control}.
	Take $0<\epsilon<\frac{1}{3(k+1)}$ small enough such that $\spt(f_k^\epsilon)\subset U_k$.
	By the definition of $f_k^\epsilon$, we have
	\begin{align}\label{eq:g-1}
		\int_{\Omega_{|h|/\tau}}\left\lvert \Delta_h f_k^\epsilon(x)\right\rvert \,dx&=\int_{\Omega_{|h|/\tau}}\left\lvert f_k^\epsilon(x)-f_k^\epsilon(x-h)\right\rvert \,dx\notag\\
		&\leq\int_{\Omega_{|h|/\tau}} \int_{B(0,1)}\eta(y)\left\lvert f_k(x-\epsilon y)-f_k(x-h-\epsilon y)\right\rvert \,dy \,dx\notag\\
		&	=\int_{B(0, 1)} \eta(y)\, dy\int_{\Omega_{|h|/\tau}}\left\lvert f_k(x-\epsilon y)-f_k(x-h-\epsilon y)\right\rvert\, dx.
	\end{align}
	
	If $x\in \Omega_{|h|/\tau}\setminus\Omega^{3(k+1)}$, then $d(x)\leq \frac{1}{3(k+1)}$. Both $x-\epsilon y$ and $x-h-\epsilon y$ belong to $B(x, \tau d(x)+\epsilon)$ whenever $y\in B(0, 1)$. However, because $0<\epsilon<\frac{1}{3(k+1)}$, for any $z\in B(x, \tau d(x)+\epsilon)$
	we have  $d(z)\leq d(x)+\tau d(x)+\epsilon <2d(x)+\epsilon<\frac{1}{k+1}$,
	and so $B(x, \tau d(x)+\epsilon)\cap U_k=\emptyset$. It follows from $\spt(f_k)\in U_k$ that \eqref{eq:g-1} can be rewritten as
	\begin{equation}\label{eq:change-x}
		\int_{\Omega_{|h|/\tau}}\left\lvert \Delta_h f_k^\epsilon(x)\right\rvert \,dx\leq \int_{B(0, 1)} \eta(y)\, dy\int_{\Omega_{|h|/\tau}\cap\Omega^{3(k+1)}}\left\lvert f_k(x-\epsilon y)-f_k(x-h-\epsilon y)\right\rvert\, dx.
	\end{equation}
	We change the variable by taking  $\tilde x=x-\epsilon y$. Then $d\tilde x=dx$. If $x\in \Omega_{|h|/\tau}\cap\Omega^{3(k+1)}$, we have 
	$$d(x)>\max \left\{\frac{|h|}{\tau}, \frac{1}{3(k+1)}\right\},$$
	and hence
	$$d(\tilde x)>d(x)-\epsilon>d(x)-\frac{1}{3(k+1)}> \max\left\{0, \frac{|h|}{\tau}- \frac{1}{3(k+1)}\right\}.$$
	{Therefore, $\tilde x$ belongs to $\Omega$ and  satisfies $|h|<\tau d(\tilde x)+\frac{\tau}{3(k+1)}$. It follows from \eqref{eq:change-x} and the fact $\int_{B(0, 1)}\eta(y)\,dy=1$ that
		\begin{equation}\label{eq:upper-delta}
			\int_{\Omega_{|h|/\tau}}\left\lvert \Delta_h f_k^\epsilon(x)\right\rvert \,dx\leq \int_{\Omega}\left\lvert \Delta_h f_k(\tilde x) \right\rvert \chi_{\left\{|h|< \tau d(\tilde x)+\tau/(3k+3) \right\}}(h)\,d\tilde x =\frac{|h|^{n+\delta}}{2} g(h).
		\end{equation}
	}
	{
		For every $h\in \mathbb R^n\setminus\{0\}$, by the triangle inequality and \eqref{eq:upper-delta}, 
		\begin{align*}
			g_\epsilon(h) &\leq \frac{1}{|h|^{n+\delta}}\int_{\Omega_{|h|/\tau}}\left\lvert \Delta _h f_k(x)\right\rvert\, dx+ \frac{1}{|h|^{n+\delta}}\int_{\Omega_{|h|/\tau}}\left\lvert\Delta_h f_k^{\epsilon}(x)\right\rvert \,dx\\
			&\leq 	\frac{1}{|h|^{n+\delta}}\int_{\Omega}\left\lvert \Delta _h f_k(x)\right\rvert\chi_{\{|h|<\tau d(x)\}}(h)\, dx+\frac{g(h)}{2}\leq g(h),
		\end{align*}
		which gives \eqref{control} as desired.}
	
	It remains to check that $g\in L^{1}(\mathbb R^n)$. Note that by the Fubini theorem,
	\begin{align*}
		\int_{\mathbb R^n} |g(h)|\, dh&=2\int_{\mathbb R^n}\int_{\Omega} \frac{\left\lvert \Delta_h f_k(x) \right\rvert}{|h|^{n+\delta}} \chi_{\{|h|< \tau d(x)+\tau/(3k+3) \}}(h)\,dx\,dh\\
		&=2\int_{\mathbb R^n} \int_{\Omega}\frac{\left\lvert \Delta_h f_k(x) \right\rvert}{|h|^{n+\delta}} \chi_{\{|h|< \tau d(x) \}}(h)\,dx\,dh\\
		&\qquad\qquad+2\int_{\mathbb R^n} \int_{\Omega}\frac{\left\lvert \Delta_h f_k(x) \right\rvert}{|h|^{n+\delta}} \chi_{\{\tau d(x)\leq |h|< \tau d(x)+\tau/(3k+3) \}}(h)\,dx\,dh=:T_1+T_2.
	\end{align*}
	It follows from Claim $1$ and \eqref{eq:rewrite-norm} that 
	$$T_1=2\int_{\mathbb R^n} \int_{\Omega_{|h|/\tau}}\frac{\left\lvert \Delta_h f_k(x) \right\rvert}{|h|^{n+\delta}} \,dx\,dh=2[f_k]_{\mathcal {W}^{\delta,1}(\Omega)}<\infty.$$
	For the estimate of $T_2$, note that for every $x\in\Omega\setminus\Omega^{3(k+1)}$ and every $|h|<\tau d(x)+\tau/(3k+3)$, $d(x-h)<d(x)+|h|<2d(x)+1/(3k+3)\leq 1/(k+1)$ and hence neither $x$ nor $x-h$ belongs to $U_k$. Therefore, it follows from $\spt(f_k)\subset U_k$ that
	\begin{align*}
		T_2&= 2\int_{\mathbb R^n} \int_{\Omega^{3(k+1)}}\frac{\left\lvert \Delta_h f_k(x) \right\rvert}{|h|^{n+\delta}} \chi_{\{\tau d(x)\leq |h|< \tau d(x)+\tau/(3k+3) \}}(h)\,dx\,dh\\
		&	\leq 2 \int_{\mathbb R^n} \int_{\Omega^{3(k+1)}}\left(\frac{3k+3}{\tau}\right)^{n+\delta} (|f_k(x)|+|f_k(x-h)|)\chi_{\{\tau/(3k+3)\leq |h|\leq \tau\diam\Omega+\tau/(3k+3)\}}(h)\, dx\,dh\\
		&	\leq 4 \left(\frac{3k+3}{\tau}\right)^{n+\delta} \omega_n \left(\tau\diam\Omega+\frac{\tau}{3k+3}\right)^n \|f_k\|_{L^1(\mathbb R^n)}<\infty.
	\end{align*}
	Because both $T_1$ and $T_2$ are finite, we get that $g\in L^1(\mathbb R^n)$.
	Therefore, we have completed the proof of Claim 2.
	
	\smallskip
	
	For  any $j\in \mathbb N$, it follows from Claim 2 that for each $k=1, 2, \dots,$ there exists an $\epsilon_k>0$ such that
	$$\left\|f_k^{\epsilon_k} -f_k\right\| _{\mathcal {W}^{\delta,1}(\Omega)} <\frac{1}{2^k j}.$$
	Define
	\[
	f_j=\sum_{k=1}^{\infty}f_k^{\epsilon_k}.
	\]
	In the neighborhood of each point $x\in \Omega$, by the definition of $U_k$, there are only finitely many nonzero terms in above sum. Since $f_k^{\epsilon_k}\in C^{\infty}(\Omega)$ for every $k=1, 2, \dots,$ we know that $f_j\in C^{\infty}(\Omega)$.
	Moreover,
	\[
	\left\lVert f-f_j\right\rVert _{\mathcal {W}^{\delta,1}(\Omega)}\leq \sum_{k=1}^{\infty} \left\|f_k^{\epsilon_k} -f_k\right\| _{\mathcal {W}^{\delta,1}(\Omega)} <\frac{1}{j}.
	\]
	Consequently, $f_j\in \mathcal {W}^{\delta,1}(\Omega)$ and $f_j\rightarrow f$ in $\mathcal {W}^{\delta,1}(\Omega)$ as $j\rightarrow \infty$. Therefore, the proof is completed.
\end{proof}

{
	\begin{rem}\label{rem-dense}
		If $x\in\Omega$ is a Lebesgue point of $f\in W^{\delta,1},$ then the sequence of functions $\{f_{j}\}$ in the proof of Proposition \ref{prop:fracSob-dense} satisfies
		$$\lim_{j\rightarrow \infty}f_{j}(x)=f^*(x);$$
		this can be seen as follows. In the above proof, for any \(x \in \Omega\) there exists a ball \(B(x, r_x)\)
		and $k_x\in \mathbb{N}$ such that
	\(f = \sum_{k=1}^{k_x} f\phi_k\).
		Then,
		{for sufficiently large $j$,
		each $f_j$ is constructed in a smaller ball centered at $x$}
	by mollifying separately each \(f\phi_k\),
			where \(k = 1, \dots, k_x\) and summing over these. According to \cite[Proposition 8.4]{B-P16},
		if $x$ is a Lebesgue point of $f\phi_k$, then $\lim_{k_n\rightarrow \infty}(f\phi_k)^{1/k_n}(x)=f\phi_k(x).$ Thus, it suffices to show that if $x$ is a Lebesgue point of $f$, then $x$ is also a Lebesgue point of each \(f\phi_k\) (for \(k = k_1, \dots, k_x\)). We now verify this assertion. By the smoothness of \(\phi_k\), we have
		\begin{align*}
			\lim_{r\rightarrow 0}\,\vint{B(x,r)}|f(z)\phi(z)-f^*(x)\phi(x)|\,dz &\leq\lim_{r\rightarrow 0}\vint{B(x,r)}|f(z)\phi(z)-f^*(x)\phi(z)|\,dz\\
			&\qquad+\lim_{r\rightarrow 0}\,\vint{B(x,r)}|f^*(x)\phi(z)-f^*(x)\phi(x)|\,dz\\
			&\leq\lim_{r\rightarrow 0}\,\vint{B(x,r)}|\phi(z)||f(z)-f^*(x)|\,dz\\
			&\qquad+\lim_{r\rightarrow 0}\, \vint{B(x,r)}|f^*(x)||\phi(z)-\phi(x)|\,dz =0.
		\end{align*}
	\end{rem}
}

In \cite[Proposition 3.1]{PS20}, Ponce--Spector proved that for any \(f \in W^{\delta,1}(\mathbb{R}^n)\), the \((n-\delta)\)-dimensional Hausdorff content of its exceptional set is zero. The following theorem extends their result to functions in $ \mathcal {W}^{\delta,1}(\Omega)$.
\begin{thm}\label{thm:fine property}
	Let $\delta, \tau\in (0, 1)$, $\beta\geq n-\delta$ and $\Omega\subset \IR^n$ be a  domain. Then for every $f\in \mathcal {W}^{\delta,1}(\Omega)$, the exceptional set of $f$ is a $\beta$-dimensional Hausdorff content zero set, that is
	\[
	\mathcal{H}_{\infty} ^{\beta}(\Omega\setminus \mathcal{L}_f)=0.
	\]
\end{thm}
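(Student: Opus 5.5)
Since $\mathcal H^\beta_\infty$ and $\mathcal H^\beta$ have the same null sets, and $\mathcal H^{n-\delta}(E)=0$ implies $\mathcal H^\beta(E)=0$ for $\beta\ge n-\delta$, it suffices to treat $\beta=n-\delta$. The statement is local, so we reduce to the whole-space result of Ponce--Spector \cite[Proposition 3.1]{PS20}. Write $\Omega=\bigcup_k \Omega^k$ with $\Omega^k=\{x\in\Omega: d(x)>1/k\}$. By countable subadditivity of $\mathcal H^{n-\delta}$, it is enough to show for each $k$ that
\[
\mathcal H^{n-\delta}\big(\Omega^k\setminus \mathcal L_f\big)=0 .
\]

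Fix $k$ and a cutoff $\psi\in C_c^\infty(\Omega^{2k})$ with $0\le\psi\le1$ and $\psi=1$ on a neighborhood of $\overline{\Omega^{k}}\cap B(0,R)$. Exhaust by balls $B(0,R)$ as well if $\Omega$ is unbounded, since the statement does not assume boundedness. Extend $g:=\psi f$ by zero to $\IR^n$. We need $g\in W^{\delta,1}(\IR^n)$.

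\textbf{Integrability.} Because $\spt\psi$ is compact in $\Omega$, we have $f\in L^1(\spt \psi)$. Strictly, $\mathcal W^{\delta,1}(\Omega)\subset L^1(\Omega)$ by definition, so $g\in L^1(\IR^n)$.

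\textbf{Energy.} Split $[g]_{W^{\delta,1}(\IR^n)}$ according to $|x-y|<\rho$ or $|x-y|\ge\rho$, with $\rho:=\tau/(4k)$.

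\emph{Far pairs.} For $|x-y|\ge\rho$, the integrand is bounded by $(|g(x)|+|g(y)|)\rho^{-n-\delta}$. Integrating gives at most $C\rho^{-\delta}\|g\|_{L^1}<\infty$.

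\emph{Near pairs.} For $|x-y|<\rho$ with at least one of $x,y$ in $\spt\psi\subset\Omega^{2k}$, say $y$, we get $|x-y|<\tau/(4k)<\tau d(y)$, so the pair lies in the region defining $[\cdot]_{\mathcal W^{\delta,1}(\Omega)}$. This is up to the symmetric case; when $x\in\spt\psi$ instead, use $d(y)\ge d(x)-\rho>1/(4k)$. We then write
\[
|g(x)-g(y)|\le |f(x)-f(y)|+|f(y)|\,|\psi(x)-\psi(y)| ,
\]
exactly as in Claim 1 of Proposition \ref{prop:fracSob-dense}. The first term is controlled by $[f]_{\mathcal W^{\delta,1}(\Omega)}$. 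The second is controlled by
\[
\Lip\psi\int_{\Omega^{4k}}|f(y)|\int_{|h|<\rho}|h|^{1-n-\delta}\,dh\,dy<\infty .
\]
Hence $g\in W^{\delta,1}(\IR^n)$.

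\textbf{Conclusion.} By \cite[Proposition 3.1]{PS20}, $\mathcal H^{n-\delta}_\infty(\IR^n\setminus\mathcal L_g)=0$. Since $g=f$ on a neighborhood of each point of $\Omega^k\cap B(0,R)$, Lebesgue points of $g$ there are Lebesgue points of $f$. Therefore $\Omega^k\cap B(0,R)\setminus\mathcal L_f\subset \IR^n\setminus\mathcal L_g$, and the union over $k,R$ gives $\mathcal H^{n-\delta}(\Omega\setminus\mathcal L_f)=0$, hence $\mathcal H^\beta_\infty(\Omega\setminus\mathcal L_f)=0$.

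The main obstacle is the near-pair bookkeeping. We must check that every pair $(x,y)$ with $|x-y|<\rho$ and $g(x)\ne g(y)$ falls in the admissible region $|x-y|<\tau d(\cdot)$, in the orientation the improved energy uses: the outer variable $y$ carries the constraint $\tau d(y)$. Since the kernel is symmetric, we handle the case $x\in\spt\psi$, $y\notin\spt\psi$ by using $d(y)\ge d(x)-\rho$. This is why $\rho$ is taken small relative to $1/k$.
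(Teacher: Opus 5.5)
Your proposal is correct, but it reaches a function in $W^{\delta,1}(\IR^n)$ (to which \cite[Proposition 3.1]{PS20} applies) by a different route than the paper.

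The paper works on small balls $B_x=B(x,\tau d(x)/3)$. It observes that $B_x\subset B(y,\tau d(y))$ for every $y\in B_x$, so $[f|_{B_x}]_{W^{\delta,1}(B_x)}\le[f]_{\mathcal W^{\delta,1}(\Omega)}$. It then invokes Zhou's extension theorem \cite[Theorem 1.1]{Zhou15} to extend $f|_{B_x}$ to some $\hat f\in W^{\delta,1}(\IR^n)$. Finally it covers $\Omega$ by countably many such balls with rational centers.

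You instead multiply by a cutoff $\psi$ and verify directly that $\psi f\in W^{\delta,1}(\IR^n)$. You use the Leibniz-type splitting from Claim~1 of Proposition~\ref{prop:fracSob-dense}, together with the observation that every pair with $|x-y|<\rho$ and $g(x)\neq g(y)$ lies in the admissible region $\{|x-y|<\tau d(y)\}$. Your handling of the orientation of that region, via $d(y)\ge d(x)-\rho>1/(4k)$, is correct.

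The paper's route is shorter but relies on a nontrivial external result, namely the characterization of $W^{\delta,p}$-extension domains. Yours is self-contained and uses only tools already developed in the paper, at the cost of the near/far bookkeeping.

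Two small points of wording:
\begin{itemize}
\item In the far-pair estimate, the pointwise bound $(|g(x)|+|g(y)|)\rho^{-n-\delta}$ is not integrable over the unbounded region $\{|x-y|\ge\rho\}$. You should keep the kernel and integrate $|h|^{-n-\delta}$ over $|h|\ge\rho$. This gives $2\|g\|_{L^1}\,n\omega_n\rho^{-\delta}/\delta$, which is the bound you state.
\item The cutoff should equal $1$ on a neighborhood of the compact set $\overline{\Omega^k}\cap\overline{B(0,R)}$. This set lies in $\Omega^{2k}$, so a compactly supported $\psi\in C_c^\infty(\Omega^{2k})$ with this property exists.
\end{itemize}
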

\begin{proof}
	Let $f\in \mathcal {W}^{\delta,1}(\Omega)$. Given a point $x\in \Omega$, let $B_x=B(x, r_x)$ with $r_x=\tau d(x)/3$. For any $y\in B_x$, since $\tau\in (0, 1)$, we have 
	$$\tau d(y)\geq \tau (d(x)-r_x)>\frac{2\tau}{3} d(x)=2r_x.$$
	Consequently, $B_x\subset B(y, \tau d(y))$ for any $y\in B_x$. Then it is clear that
	$$\int_{B_x}\int_{B_x}\frac{|f(z)-f(y)|}{|z-y|^{n+\delta}}\, dz\,dy\leq \int_{B_x}\int_{|z-y|<\tau d(y)} \frac{|f(z)-f(y)|}{|z-y|^{n+\delta}}\, dz\,dy\leq [f]_{\mathcal {W}^{\delta,1}(\Omega)}<\infty.$$
	
	By an extension result for the fractional Sobolev space (cf. \cite[Theorem 1.1]{Zhou15}), there exists an extension $\hat f$ of $f|_{B_x}$ such that $\hat f\in W^{\delta, 1}(\mathbb R^n)$. Let 
	$$\mathcal {L}_{\hat f}:=\{y\in \mathbb R^n: y \text{ is a Lebesgue point of } \hat f\}.$$
	It is clear that $\mathcal L_f \cap B_x=\mathcal L_{\hat f}\cap B_x$. Therefore, it follows from \cite[Proposition 3.1]{PS20} that
	$$\mathcal H^{n-\delta}_\infty(B_x\setminus \mathcal L_{f})=\mathcal H^{n-\delta}_\infty(B_x\setminus \mathcal L_{\hat f})\leq \mathcal H^{n-\delta}_\infty(\mathbb R^n\setminus \mathcal L_{\hat f})=0.$$
	Let $\mathcal A$ be the collection of all rational points in $\Omega$. Then $\{B_x\}_{x\in \mathcal A}$ forms a countable covering of $\Omega$. Hence,
	$$\mathcal H^{n-\delta}_{\infty}(\Omega\setminus \mathcal L_f)\leq \sum_{x\in \mathcal A} \mathcal H^{n-\delta}_{\infty}(B_x\setminus \mathcal L_f)=0.$$
For any \(\beta\ge 0\), the \(\beta\)-dimensional Hausdorff content \(\mathcal{H}_\infty^\beta\) and the \(\beta\)-dimensional Hausdorff measure \(\mathcal{H}^\beta\) have the same null sets.
	Consequently, \(\mathcal{H}_\infty^{n-\delta}(\Omega\setminus \mathcal L_f) =0= \mathcal{H}^{n-\delta}(\Omega\setminus \mathcal L_f)\) and thus
	 \(\mathcal{H}^\beta(\Omega\setminus \mathcal L_f) = 0 = \mathcal{H}_\infty^\beta(\Omega\setminus \mathcal L_f)\) for all $\beta\geq n-\delta$, which completes the proof of the theorem.
\end{proof}

\section{\large Functional and Geometric Boxing inequalities}\label{sec-5}

In this section, we
present the functional Boxing inequality for \(L^1(\Omega)\) functions (Theorem \ref{functional-boxing}),
and give one corollary based on this inequality (Corollary \ref{cor:1J-NFbox-q}). Finally, we derive the geometric Boxing inequality (Theorem \ref{geometric-boxing}) from the functional Boxing inequality.

{First we state two lemmas that are still limited to continuous functions.}

\begin{lem}\label{lem:1J-SPI}
	Let $\delta, \tau\in (0, 1)$ and $1\leq s\leq \frac{n}{n-\delta}$. Assume that $\Omega\subset \IR^n$ is an $s$-distance John domain with John constant $C_J\geq 1$. Then there is  a constant $C_5=C_5(n, \tau, s, C_J, \diam \Omega)$
	such that for every $f\in C(\Omega)$,
	$$\left(\int_{\Omega}\left\lvert f-\vint{3B_0} f\right\rvert^{\frac{n}{s(n-\delta)}}\,dx \right)^{\frac{s(n-\delta)}{n}} \leq C_5(1-\delta)\int_\Omega \int_{\left\lvert x-y\right\rvert <\tau d(y)}\frac{\left\lvert f(x)-f(y)\right\rvert }{\left\lvert x-y\right\rvert^{n+\delta} }\,dx\,dy.$$
\end{lem}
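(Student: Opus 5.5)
This is the special case $q=\frac{n}{s(n-\delta)}$ of Theorem \ref{thm:sJ-WPI}, combined with the comparison between the Lebesgue measure and the $n$-dimensional Hausdorff content. With this $q$ we have $\alpha=n-qs(n-\delta)=0$, so $n-\alpha=n$. The admissible range $1\le q\le \frac{n}{s(n-\delta)}$ in Theorem \ref{thm:sJ-WPI} includes this endpoint, and it is nonempty precisely because $s\le \frac{n}{n-\delta}$. Applying Theorem \ref{thm:sJ-WPI} to $f\in C(\Omega)$ therefore gives
\[
\left(\int_\Omega \left|f-\vint{3B_0}f\right|^{\frac{n}{s(n-\delta)}}\,d\mathcal H^n_\infty\right)^{\frac{s(n-\delta)}{n}}\le C_4(1-\delta)\int_\Omega\int_{|x-y|<\tau d(y)}\frac{|f(x)-f(y)|}{|x-y|^{n+\delta}}\,dx\,dy .
\]

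It remains to identify the Choquet integral with respect to $\mathcal H^n_\infty$ with the Lebesgue integral. Since $f$ is continuous, each superlevel set $\{|f-f_{3B_0}|^{q}>t\}$ is open, hence Lebesgue measurable. By Section \ref{Hausdorff}, for Lebesgue measurable $U$ we have $|U|=\mathcal H^n_\infty(U)$, up to the normalization $\omega_n$, which is exactly the normalization used in the definition of $\mathcal H^n_\infty$. Alternatively, one can avoid normalization issues altogether by using \eqref{mu-content} with $\mu$ the Lebesgue measure and $\alpha=n$: since $|B(x,r)|=\omega_n r^n$, this gives $|U|\le C(n)\mathcal H^n_\infty(U)$.

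By the layer-cake formula,
\[
\int_\Omega |g|^q\,dx=\int_0^\infty |\{|g|^q>t\}|\,dt\le C\int_\Omega |g|^q\,d\mathcal H^n_\infty, \qquad g=f-f_{3B_0}.
\]
Taking the power $1/q$ then yields the lemma with $C_5=C(n)^{1/q}C_4$. Since $q\ge1$, we have $C(n)^{1/q}\le \max\{1,C(n)\}$, so $C_5$ depends only on $n,\tau,s,C_J,\diam\Omega$.

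There is no real obstacle. The only points to check are that the endpoint $q=\frac{n}{s(n-\delta)}$ is allowed in Theorem \ref{thm:sJ-WPI}, which it is, and that $\alpha=0$ is compatible with the bounds \eqref{eq:estimate-n-alpha} used there. Indeed $n-\alpha=n$ satisfies $n-1<n\le n$, so those bounds hold.
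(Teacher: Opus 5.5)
Your argument is correct, but it makes the comparison with Lebesgue measure at the opposite endpoint from the paper.

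The paper applies Theorem \ref{thm:sJ-WPI} only with $q=1$, i.e.\ with the content $\mathcal{H}^{s(n-\delta)}_\infty$. It then passes to the Lebesgue norm in two steps:
\begin{itemize}
\item Lemma \ref{lem:q-integral} gives $\|g\|_{L^{n/(s(n-\delta))}(\Omega)}\le 2\int_0^\infty |\{|g|>t\}|^{s(n-\delta)/n}\,dt$.
\item A covering/concavity estimate gives $|U|^{s(n-\delta)/n}\le \mathcal{H}^{s(n-\delta)}_\infty(U)$ for open $U$.
\end{itemize}
This yields $C_5=2C_4$.

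You instead take $q=n/(s(n-\delta))$, so $\alpha=0$. Then the only comparison needed is the trivial $|U|\le\mathcal{H}^n_\infty(U)$, which follows from subadditivity of Lebesgue measure over a ball cover, and you get $C_5=C_4$. This is shorter, and it avoids the normalization bookkeeping ($\omega_n^{\beta/n}$ versus $\omega_\beta$) hidden in the paper's concavity step.

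The cost is that you rely on the full $q$-range of Theorem \ref{thm:sJ-WPI}, uniformly in $q$; here $q$ depends on $\delta$, though the theorem's statement does give this uniformity. Moreover, that theorem's proof at $q>1$ already carries out the same Lemma \ref{lem:q-integral}-plus-concavity conversion inside Proposition \ref{lem:sJ-WPI-B0}. It also invokes the estimate of \cite{MPW24} at the top exponent $sq=n/(n-\delta)$. So your route relocates the work rather than removing it.

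The paper's version needs only the $q=1$ (Boxing) inequality. That is why the same argument can be reused in Proposition \ref{prop-equivalence} and Theorem \ref{Box-John}. There the hypothesis is only a Boxing inequality for $\mathcal{H}^\beta_\infty$, and no family of $q$-versions is available.
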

\begin{proof}
	Cavalieri's principle, Lemma \ref{lem:q-integral} and \eqref{eq:estimate-q} imply
	\begin{align*}
		\left(\int_{\Omega}\left\lvert f-\vint{3B_0} f\right\rvert^{\frac{n}{s(n-\delta)}}\,dx \right)^{\frac{s(n-\delta)}{n}} 
		\leq 2\int_{0}^{\infty}|\left\{x\in \Omega:|f(x)-f_{3B_0}|>t\right\} |^{\frac{s(n-\delta)}{n}}\,dt.
	\end{align*}
	To estimate the integrand on the right-hand side, we note that for every open set $U$, we have
	\begin{equation}\label{L-less-H}
		|U|^{\frac{s(n-\delta)}{n}}\leq \mathcal{H}_{\infty} ^{s(n-\delta)}(U),
	\end{equation}
	which follows from a covering argument and the concavity of the function $t\in [0,\infty)\rightarrow t^{\frac{s(n-\delta)}{n}}.$
	
	Hence, setting \(q = 1\) so that \(n - \alpha = s(n - \delta)\)  in Theorem \ref{thm:sJ-WPI}, we have
	\[
	\left(\int_{\Omega}\left\lvert f-\vint{3B_0} f\right\rvert^{\frac{n}{s(n-\delta)}}\,dx \right)^{\frac{s(n-\delta)}{n}}  \leq 2C_4(1-\delta)\int_\Omega \int_{\left\lvert x-y\right\rvert <\tau d(y)}\frac{\left\lvert f(x)-f(y)\right\rvert }{\left\lvert x-y\right\rvert^{n+\delta} }\,dx\,dy.
	\]
	We thus complete the proof by setting \(C_5 = 2C_4\).
\end{proof}
\begin{lem}\label{lem:1J-Fbox}
	Let $\delta, \tau\in (0, 1)$ and $1\leq s\leq \frac{n}{n-\delta}$. Assume that $\Omega\subset \IR^n$ is an $s$-distance John domain with John constant $C_J\geq 1$. Then there is  a constant
	$C_6=C_6(n, \tau, s, C_J, \diam \Omega)$
	such that for every $f\in C(\Omega)\cap L^1(\Omega)$,
	\begin{align*}
		\int_{\Omega}\left\lvert f-\vint{\Omega} f\right\rvert \,d\mathcal{H}_{\infty} ^{s(n-\delta)} \leq C_6(1-\delta)\int_\Omega \int_{\left\lvert x-y\right\rvert <\tau d(y)}\frac{\left\lvert f(x)-f(y)\right\rvert }{\left\lvert x-y\right\rvert^{n+\delta} }\,dx\,dy.
	\end{align*}
\end{lem}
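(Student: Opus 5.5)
The plan is to deduce Lemma~\ref{lem:1J-Fbox} from Theorem~\ref{thm:sJ-WPI} with $q=1$, so that $n-\alpha=s(n-\delta)$. The only change needed is to replace the average $\vint{3B_0}f$ by $\vint{\Omega}f$. We may assume the right-hand side $[f]_{\mathcal W^{\delta,1}(\Omega)}$ is finite, since otherwise there is nothing to prove.

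By the sublinearity of the Choquet integral, and because $\mathcal H^{s(n-\delta)}_\infty(\{c>t\})=\mathcal H^{s(n-\delta)}_\infty(\Omega)\chi_{\{t<c\}}$ for a constant $c\ge0$, we have
\[
\int_\Omega\Big|f-\vint{\Omega}f\Big|\,d\mathcal H^{s(n-\delta)}_\infty\le 2\int_\Omega\Big|f-\vint{3B_0}f\Big|\,d\mathcal H^{s(n-\delta)}_\infty+2\,\mathcal H^{s(n-\delta)}_\infty(\Omega)\Big|\vint{3B_0}f-\vint{\Omega}f\Big|.
\]
Theorem~\ref{thm:sJ-WPI} with $q=1$ bounds the first term by $2C_4(1-\delta)[f]_{\mathcal W^{\delta,1}(\Omega)}$. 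For the factor $\mathcal H^{s(n-\delta)}_\infty(\Omega)$, we use \eqref{eq:estimate-content-holder}: it is at most $\omega_{s(n-\delta)}(\diam\Omega)^{s(n-\delta)}$, which is bounded by a constant depending only on $n$ and $\diam\Omega$, since $s(n-\delta)\in(n-1,n]$.

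For the difference of averages, we write
\[
\Big|\vint{3B_0}f-\vint{\Omega}f\Big|\le\vint{\Omega}\Big|f-\vint{3B_0}f\Big|\,dx\le|\Omega|^{-1}\cdot|\Omega|^{1-\frac{s(n-\delta)}{n}}\Big\|f-\vint{3B_0}f\Big\|_{L^{\frac{n}{s(n-\delta)}}(\Omega)},
\]
which is Hölder's inequality with exponent $p=n/(s(n-\delta))\ge1$. Lemma~\ref{lem:1J-SPI} then bounds the right side by $C_5|\Omega|^{-s(n-\delta)/n}(1-\delta)[f]_{\mathcal W^{\delta,1}(\Omega)}$. Alternatively, one can use \eqref{L-less-H} and the $q=1$ case of Theorem~\ref{thm:sJ-WPI} directly.

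The main obstacle is keeping the constant dependent only on $(n,\tau,s,C_J,\diam\Omega)$. The factor $|\Omega|^{-s(n-\delta)/n}$ must be controlled from below by these quantities. We use $|\Omega|\ge|B_0|=\omega_n(d(x_J)/18)^n$. We also need $d(x_J)$ bounded below in terms of $C_J$, $s$ and $\diam\Omega$; this follows from the John condition applied to a point $x$ with $|x-x_J|\ge\diam\Omega/2$, which gives $C_Jd(x_J)>(\diam\Omega/2)^s$. The same fact already underlies the constants in Theorem~\ref{thm:sJ-WPI}.

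Combining the estimates yields the claim with
\[
C_6=4C_4+2\,\omega_{s(n-\delta)}(\diam\Omega)^{s(n-\delta)}\,C_5\,|B_0|^{-s(n-\delta)/n}.
\]
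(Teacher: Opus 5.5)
Your proposal is correct and follows the paper's proof essentially step by step: the sublinearity split around $\vint{3B_0}f$, H\"older's inequality into $L^{n/(s(n-\delta))}$ combined with Lemma~\ref{lem:1J-SPI} and Theorem~\ref{thm:sJ-WPI} (with $q=1$), and a lower bound on $|\Omega|$ from the John condition at $x_J$. One small fix: a point $x\in\Omega$ with $|x-x_J|\ge\diam\Omega/2$ need not exist (e.g.\ when $\Omega$ is a ball centred at $x_J$), so either use $\diam\Omega/3$ as the paper does, or take a limit over points with $|x-x_J|>\diam\Omega/2-\epsilon$, which still gives $C_J d(x_J)\ge(\diam\Omega/2)^s$.
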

\begin{proof}
	Since $|f-f_{\Omega}|\leq |f-f_{3B_0}|+|f_{\Omega}-f_{3B_0}|$, using the
	monotonicity and sublinearity properties of the Choquet integral and then using H\"older's inequality, we get
	\begin{align}
		\int_{\Omega}\left\lvert f-\vint{\Omega} f\right\rvert \,d\mathcal{H}_{\infty}^{s(n-\delta)} &\leq 2\int_{\Omega}\left\lvert \vint{\Omega} f-\vint{3B_0}f\right\rvert \,d\mathcal{H}_{\infty} ^{s(n-\delta)}+2\int_{\Omega}\left\lvert f-\vint{3B_0} f\right\rvert \,d\mathcal{H}_{\infty} ^{s(n-\delta)}\notag\\
		&\leq {2\mathcal{H}_{\infty} ^{s(n-\delta)}(\Omega)}\vint{\Omega}\left\lvert f-\vint{3B_0} f\right\rvert\, dx+2\int_{\Omega}\left\lvert f-\vint{3B_0} f\right\rvert \,d\mathcal{H}_{\infty} ^{s(n-\delta)}\notag\\
		&\leq \frac {2\mathcal{H}_{\infty} ^{s(n-\delta)}(\Omega)}{|\Omega|^{(s(n-\delta))/n}} \left(\int_{\Omega}\left\lvert f-\vint{3B_0} f\right\rvert^{\frac{n}{s(n-\delta)}}\,dx \right)^{\frac{s(n-\delta)}{n}}\notag\\
	    &\qquad\qquad\qquad\qquad\qquad\qquad+2\int_{\Omega}\left\lvert f-\vint{3B_0} f\right\rvert \,d\mathcal{H}_{\infty}^{s(n-\delta)}.\label{eq:f-f-omega}
	\end{align}
	Let $x_J$ be the John center. Then it is clear that $\Omega \subset B(x_J, \diam\Omega).$ Moreover, there exists a point $y\in \Omega$ with $|y-x_J|\geq \diam\Omega/3$. Let $\gamma$ be a John curve connecting $x_J$ and $y$. By the property \eqref{def-John} of John curves, we know that 
	$$d(x_J)\geq C_J^{-1} |y-x_J|^s\geq \frac{(\diam\Omega )^s}{3^sC_J}.$$
	Consequently,
	$$B\left(x_J, \frac{(\diam\Omega) ^s}{3^sC_J}\right)\subset \Omega\subset B(x_J, \diam\Omega).$$
	Therefore, 
	$$\mathcal{H}_{\infty} ^{s(n-\delta)}(\Omega)\leq \omega_{s(n-\delta)} (\diam\Omega)^{s(n-\delta)}\quad \text{and} \quad |\Omega|^{(s(n-\delta))/n}\geq \left(\frac{\omega_n^{1/n}}{3^sC_J}\right)^{{s(n-\delta)}} (\diam\Omega)^{s^2(n-\delta)}. $$
	By using the
	facts that $n-1<s(n-\delta)<n$ and $1\le s<2$,
	we obtain that
	$$\frac {2\mathcal{H}_{\infty} ^{s(n-\delta)}(\Omega)}{|\Omega|^{\frac{s(n-\delta)}{n}}}
	\leq C(n, C_J){(\diam\Omega)^{s(1-s)(n-\delta)}.}$$
	Hence, by applying Lemma \ref{lem:1J-SPI} and Theorem \ref{thm:sJ-WPI} to \eqref{eq:f-f-omega}, we have 
	\[
	\int_{\Omega}\left\lvert f-\vint{\Omega} f\right\rvert \,d\mathcal{H}_{\infty} ^{s(n-\delta)} \leq C_6(1-\delta)\int_\Omega \int_{\left\lvert x-y\right\rvert <\tau d(y)}\frac{\left\lvert f(x)-f(y)\right\rvert }{\left\lvert x-y\right\rvert^{n+\delta} }\,dx\,dy,
	\]
	where the constant $C_6=\big(C(n,C_J) {(\diam\Omega)^{s(1-s)(n-\delta)}}\big)C_5+C_4$. Thus, the proof is finished.  
\end{proof}

With the above preparations, we are now ready to prove Theorem \ref{functional-boxing}.
\begin{proof}[Proof of Theorem \ref{functional-boxing}]
	We may assume that $f\in \mathcal {W}^{\delta,1}(\Omega)$, since if $f\notin \mathcal {W}^{\delta,1}(\Omega)$, the inequality obviously holds. Relying on Proposition \ref{prop:fracSob-dense} and Remark \ref{rem-dense}, we know that there exist a sequence  $\{f_k\}_{k=1}^{\infty}\subset C^{\infty}(\Omega)\cap \mathcal {W}^{\delta,1}(\Omega)$ such that  
	\begin{equation}\label{eq:approx}
		\left\lVert f-f_k\right\rVert _{\mathcal {W}^{\delta,1}(\Omega)}\rightarrow 0\quad \text{as}~k\rightarrow \infty,
	\end{equation}
	and for any $x\in \mathcal{L}_f$,
\begin{equation}\label{pointwise}
	\lim_{k\rightarrow \infty}f_k(x)=f^*(x).
\end{equation}
Since $(f_k)_{\Omega}\rightarrow f_\Omega$ as $k\rightarrow \infty$, 	by a substitution of $f-f_\Omega$ (resp. $f_{k} - (f_{k})_\Omega$) for $f$ (resp. $f_k$), we may assume that $f_\Omega=0$ and $(f_k)_\Omega = 0$ for every $k$.

The rest of the proof relies on the strong subadditivity of the
dyadic Hausdorff content  $\widehat{\mathcal{H}}_{\infty}^{s(n-\delta)}$.
According to \eqref{strong subadditive},
$\widehat{\mathcal{H}}_{\infty}^{s(n-\delta)}$ satisfies strong subadditivity and is comparable to  $\mathcal{H}_{\infty} ^{s(n-\delta)}$ with constants depending only on $n$.
Therefore, to complete the proof, it is sufficient to show that there is a constant $C=C(n, \tau, s, C_J, \diam \Omega)$
such that 
	\begin{equation}\label{eq:f^*}
		\int_{\Omega} |f^*| \,d\widehat{\mathcal{H}}_{\infty}^{s(n-\delta)}\leq C(1-\delta)\int_\Omega \int_{\left\lvert x-y\right\rvert <\tau d(y)}\frac{\left\lvert f(x)-f(y)\right\rvert }{\left\lvert x-y\right\rvert^{n+\delta} }\,dx\,dy.
	\end{equation}
	
	By using Lemma \ref{lem:1J-Fbox} and the comparability of $\widehat{\mathcal{H}}_{\infty}^{s(n-\delta)}$ and $\mathcal{H}_{\infty} ^{s(n-\delta)}$, there is  a constant $C_6=C(n, \tau, s, C_J, \diam \Omega)$
	such that 
	\begin{align}\label{smooth}
		\int_{\Omega}|f_k| \,d\widehat{\mathcal{H}}_{\infty}^{s(n-\delta)} \leq C_6(1-\delta)\int_\Omega \int_{\left\lvert x-y\right\rvert <\tau d(y)}\frac{\left\lvert f_k(x)-f_k(y)\right\rvert }{\left\lvert x-y\right\rvert^{n+\delta} }\,dx\,dy
	\end{align}
	holds for every $k$. 
	Through the pointwise convergence \eqref{pointwise}, we obtain that for every $t>0$,
	\[
	\left\{x\in \Omega: |f^*(x)|>t\right\}\subset \left(\bigcup_{i=1}^{\infty}\bigcap_{k=i}^{\infty}\left\{x\in \Omega: |f_k(x)|>t\right\}\right)\bigcup \left(\Omega\setminus \mathcal{L}_{f}\right) .
	\]
Moreover, it follows from the increasing set property of $\widehat{\mathcal{H}}_{\infty}^{s(n-\delta)}$ (cf. \cite[Theorem 2.1]{YY08}) that 
	\[
	\widehat{\mathcal{H}}_{\infty}^{s(n-\delta)}\left(\bigcup_{i=1}^{\infty}\bigcap_{k=i}^{\infty}\left\{x\in \Omega:|f_k(x)|>t\right\}\right) =\lim_{i\rightarrow \infty}\widehat{\mathcal{H}}_{\infty}^{s(n-\delta)}\left(\bigcap_{k=i}^{\infty}\left\{x\in \Omega: |f_k(x)|>t\right\}\right).
	\]
Since we obtain from Theorem \ref{thm:fine property} that $\widehat{\mathcal{H}}_{\infty}^{s(n-\delta)}(\Omega\setminus\mathcal L_f)=0$,  the monotonicity of $\widehat{\mathcal{H}}_{\infty}^{s(n-\delta)}$ gives that for every $t>0$,
	\[
	\widehat{\mathcal{H}}_{\infty}^{s(n-\delta)}\left(\left\{x\in \Omega: |f^*(x)|>t\right\}\right)\leq \liminf_{k\rightarrow \infty}\widehat{\mathcal{H}}_{\infty}^{s(n-\delta)}\left(\left\{x\in \Omega: |f_k(x)|>t\right\}\right).
	\]
	By Fatou's lemma, we get
	\[
	\int_{0}^{\infty}\widehat{\mathcal{H}}_{\infty}^{s(n-\delta)}\left(\left\{x\in \Omega: |f^*(x)|>t\right\}\right)\,dt\leq \liminf_{k\rightarrow \infty}\int_{0}^{\infty}\widehat{\mathcal{H}}_{\infty}^{s(n-\delta)}\left(\left\{x\in \Omega: |f_k(x)|>t\right\}\right)\,dt.
	\]
This implies that
	\[
		\int_{\Omega} |f^*| \,d\widehat{\mathcal{H}}_{\infty}^{s(n-\delta)}\leq  \liminf_{k\to\infty} 	\int_{\Omega} |f_k| \,d\widehat{\mathcal{H}}_{\infty}^{s(n-\delta)}.
	\]
	As $\{f_k\}_{k=1}^{\infty}$ converge to $f$ in $\mathcal {W}^{\delta,1}(\Omega)$,  we obtain from \eqref{smooth} that
	\[
	\int_{\Omega} |f^*| \,d\widehat{\mathcal{H}}_{\infty}^{s(n-\delta)}
	\leq C_6(1-\delta) \liminf_{k\to\infty} \, [f_k]_{\mathcal {W}^{\delta,1}(\Omega)}
	\leq C_6(1-\delta) [f]_{\mathcal {W}^{\delta,1}(\Omega)},
	\] 
	which gives \eqref{eq:f^*} as desired by taking $C=C_6$ and finishes the proof.
\end{proof}

\begin{cor}\label{cor:1J-NFbox-q}
Let $\delta, \tau \in (0, 1),$ $1\leq s\leq \frac{n}{n-\delta}$, $1\leq q\leq \frac{n}{s(n-\delta)}$, and $\alpha=n-qs(n-\delta)$. Suppose  that $\Omega\subset \IR^n$ is an $s$-distance John domain with John constant $C_J\geq 1$. Then there is  a constant $C_7=C_7(n, \tau, s, C_J, \diam \Omega)$
such that for every $f\in L^{1}(\Omega)$,
	\begin{align*}
		\left(\int_{\Omega}\left\lvert f^{\ast }-\vint{\Omega}f\right\rvert ^q\, d\mathcal{H}_{\infty} ^{n-\alpha}\right)^{1/q} \leq C_7(1-\delta)\int_\Omega \int_{\left\lvert x-y\right\rvert <\tau d(y)}\frac{\left\lvert f(x)-f(y)\right\rvert }{\left\lvert x-y\right\rvert^{n+\delta} }\,dx\,dy,
	\end{align*}
 where $f^{\ast }$ is the precise representative of $f$.
\end{cor}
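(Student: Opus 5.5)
The plan is to deduce Corollary \ref{cor:1J-NFbox-q} from Theorem \ref{functional-boxing} (the case $q=1$) by comparing the $L^q$-type Choquet norm with respect to $\mathcal{H}^{n-\alpha}_\infty$ against the $L^1$-type Choquet integral with respect to $\mathcal{H}^{s(n-\delta)}_\infty$, as indicated in Remark \ref{rem:C4}.

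First, we may assume $f\in\mathcal W^{\delta,1}(\Omega)$; otherwise the right-hand side is infinite and there is nothing to prove. Set $g=f^*-f_\Omega$ and $\Omega_t=\{x\in\mathcal L_f:|g(x)|>t\}$. By Lemma \ref{lem:q-integral} applied to the outer measure $\mathcal H^{n-\alpha}_\infty$, together with $q^{1/q}\le 2$ from \eqref{eq:estimate-q}, we get
\[
\left(\int_\Omega |g|^q\,d\mathcal H^{n-\alpha}_\infty\right)^{1/q}\le 2\int_0^\infty \mathcal H^{n-\alpha}_\infty(\Omega_t)^{1/q}\,dt .
\]

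The key pointwise step is to compare contents of sets: we need $\mathcal H^{n-\alpha}_\infty(E)^{1/q}\le C\,\mathcal H^{s(n-\delta)}_\infty(E)$ for every set $E$, where $n-\alpha=q\,s(n-\delta)$. This follows by taking any cover $\{B(x_i,r_i)\}$ of $E$ and using subadditivity of $t\mapsto t^{1/q}$:
\[
\Big(\sum_i \omega_{n-\alpha} r_i^{q s(n-\delta)}\Big)^{1/q}\le \omega_{n-\alpha}^{1/q}\sum_i r_i^{s(n-\delta)} .
\]
Taking the infimum gives the comparison, with a constant depending only on $n$ by \eqref{eq:estimate-n-alpha}, since the $\omega$'s are bounded above and below for dimensions in $(n-1,n]$. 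This is exactly the content of \cite[Lemma 10.11]{B-P16} cited in Remark \ref{rem:C4}. Consequently the right-hand side above is bounded by $C\int_\Omega|g|\,d\mathcal H^{s(n-\delta)}_\infty$.

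Finally, Theorem \ref{functional-boxing} bounds this last integral by $C(1-\delta)[f]_{\mathcal W^{\delta,1}(\Omega)}$, which yields the corollary with $C_7=C(n)\,C$. The only delicate point is that $f^*$ is defined only on $\mathcal L_f$. The complement $\Omega\setminus\mathcal L_f$ is $\mathcal H^{n-\alpha}_\infty$-null by Theorem \ref{thm:fine property}, since $n-\alpha\ge n-\delta$, so the value of $f^*$ there does not affect either Choquet integral. I expect this bookkeeping, together with checking that the constants in the content comparison are uniform in $q$ and $\delta$, to be the main (mild) obstacle; the rest is direct.
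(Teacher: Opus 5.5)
Your proposal is correct and follows the paper's proof: reduce to $q=1$ using Lemma \ref{lem:q-integral} and the content comparison $\mathcal H^{n-\alpha}_\infty(E)^{1/q}\lesssim \mathcal H^{s(n-\delta)}_\infty(E)$ (the estimate \eqref{Lq controlled by L1}), then apply Theorem \ref{functional-boxing}. Your explicit covering argument for the comparison, and your remark that $\Omega\setminus\mathcal L_f$ is content-null, only spell out details the paper leaves implicit.
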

\begin{proof}
{Estimating as in \eqref{Lq controlled by L1}, we have}

\[
\left(\int_{\Omega}\left\lvert f^{\ast }-\vint{\Omega}f\right\rvert ^q\, d\mathcal{H}_{\infty} ^{n-\alpha}\right)^{1/q} \leq 2\int_{\Omega}\left\lvert f^{\ast }-\vint{\Omega}f\right\rvert \, {d\mathcal{H}_{\infty} ^{s(n-\delta)}}.
\]
Thus, the result follows from Theorem \ref{functional-boxing}.
\end{proof}

With the help of the functional Boxing inequality, we now prove Theorem \ref{geometric-boxing}.
\begin{proof}[Proof of Theorem \ref{geometric-boxing}]
	Let $U\subset \Omega$ be Lebesgue measurable with $|U|/|\Omega|\le \gamma$. If  $\chi_{U}\notin \mathcal {W}^{\delta,1}(\Omega),$ then the boxing inequality holds automatically because $\mathcal P_{\delta, \tau}(U, \Omega)=\infty$. Hence we may assume that $\chi_{U}\in \mathcal {W}^{\delta,1}(\Omega)$.  Let $\chi_{U}^{\ast }$ be the precise representative of $\chi_{U}$ and  $\mathcal{N} _{U}:=\left\{x\in\Omega:\chi_U\neq \chi_{U}^{\ast }\right\}.$ Then it follows from the Lebesgue differentiation theorem (cf. \cite[Theorem 1.33]{EGbook}) that $|\mathcal{N} _{U}|=0$.
	Moreover, it is clear that
	\[
	\vint{\Omega}\chi_{U}\,dx=\vint{\Omega}\chi_{U}^{\ast }\, dx\quad \text{and} \quad
	[\chi_{U}^{\ast }]_{\mathcal {W}^{\delta,1}(\Omega)}=[\chi_{U}]_{\mathcal {W}^{\delta,1}(\Omega)}=\mathcal P_{\delta, \tau}(U, \Omega).
	\]
	Therefore,
	\begin{align}\label{eq:chi-U}
		\left(1-\vint{\Omega}\chi_{U}\, dx\right) \mathcal{H}_{\infty} ^{s(n-\delta)}(U\setminus \mathcal{N} _{U})
		&=\int_{U\setminus \mathcal{N} _{U}}\left(\chi_{U}-\vint{\Omega}\chi_{U}\, dx\right) \,d\mathcal{H}_{\infty} ^{s(n-\delta)}\notag\\
		&= \int_{U\setminus \mathcal{N} _{U}}\left(\chi^\ast_{U}-\vint{\Omega}\chi^\ast_{U}\, dx\right) \,d\mathcal{H}_{\infty} ^{s(n-\delta)}.
	\end{align}
	By taking $f=\chi_{U}$ in Theorem \ref{functional-boxing}, we obtain from $U\setminus \mathcal{N} _{U}\subset \Omega$ that
	\begin{align*}
		\left(1-\vint{\Omega}\chi_{U}\, dx\right) \mathcal{H}_{\infty} ^{s(n-\delta)}(U\setminus \mathcal{N} _{U})
		&\leq \int_{\Omega}\left(\chi_{U}^{\ast }-\vint{\Omega}\chi_{U}^{\ast}\, dx\right) \,d\mathcal{H}_{\infty} ^{s(n-\delta)}\\
		&\leq C_{F}(1-\delta)[\chi_{U}^{\ast }]_{\mathcal {W}^{\delta,1}(\Omega)}=C_{F}(1-\delta)\mathcal P_{\delta, \tau}(U,\Omega),
	\end{align*}
	where $C_F=C_F(n, \tau, s, C_J, \diam\Omega)$ is the positive constant in Theorem \ref{functional-boxing}.
	Since  the assumption $|U|/|\Omega|<\gamma$ implies that $1-\vint{\Omega}\chi_{U}\, dx>1-\gamma$,  we have
	\[
	\mathcal{H}_{\infty} ^{s(n-\delta)}(U\setminus \mathcal{N} _{U})\leq \frac{C_F}{\left(1-\vint{\Omega}\chi_{U}\right)}(1-\delta)\mathcal P_{\delta, \tau}(U,\Omega)\leq \frac{C_F}{1-\gamma}(1-\delta)\mathcal P_{\delta, \tau}(U,\Omega).
	\]
	Therefore, taking $C=C_F/(1-\gamma)$ completes the proof of the first part of the theorem. 
	
	If $U$ is an open set, then $\chi_{U}(x)=\chi_{U}^{\ast }(x)$ for every $x\in U$. Hence,
	$$	\left(1-\vint{\Omega}\chi_{U}\, dx\right) \mathcal{H}_{\infty} ^{s(n-\delta)}(U)=\int_U \left(\chi^\ast_{U}-\vint{\Omega}\chi^\ast_{U}\, dx\right) \,d\mathcal{H}_{\infty} ^{s(n-\delta)}.$$ 
	Substituting the estimate above into \eqref{eq:chi-U} and following the remainder of the argument yields the conclusion.
\end{proof}

\section{\large Poincar\'e-type inequalities and John characterizations}\label{Chapter-6}
In this section, we present the fractional Poincaré--Wirtinger trace inequality (Theorem~\ref{cor:1-john-Trace-intro}), which is one of the most important results derived from the functional Boxing inequality (Theorem~\ref{functional-boxing}), as well as several special cases of this inequality. We then prove that the geometric and functional Boxing inequalities are equivalent (Proposition~\ref{prop-equivalence}). Finally, we prove Theorem~\ref{Box-John} and Corollary~\ref{Box-relative}.

\begin{proof}[Proof of Theorem \ref{cor:1-john-Trace-intro}]
The representative $f^*$ is a Borel function on $\mathcal L_f$,
		since it is the pointwise limit of the continuous functions
		$\mathcal L_f\ni x\mapsto \vint{B(x,r)} f$ as $r\to 0$.
		Moreover, by 
		Theorem \ref{thm:fine property}, \eqref{eq:growth-estimate-intro}
			and the fact that $\alpha\le \delta$,
		we know that
	$\mu(\Omega\setminus \mathcal L_f)=0$, so in conclusion, $f^*$ is $\mu$-measurable on $\Omega$.
	Denote $\Omega_\lambda=\left\{x\in\Omega:\left\lvert f-f_\Omega\right\rvert>\lambda  \right\}$.
	From Lemma \ref{lem:q-integral}
	and \eqref{eq:estimate-q}, we obtain
	\begin{align}\label{eq:est-mu}
		{\left\| f^{*} - \vint\Omega f \right\|_{L^q(\Omega, d\mu)}
		=
		\left(q\int_0^\infty\lambda^{q-1}\mu(\Omega_\lambda)\, d\lambda\right)^{1/q}} 
		\leq  
		2\int_{0}^{\infty}\mu(\Omega_\lambda)^{{1}/{q}}\,d\lambda.
	\end{align}
	Note that $1/2<1/q\le 1$.
	For any countable open cover $\{B(x_i,r_i)\}_{i=1}^{\infty}$ of a set
	\(\Omega_\lambda\), using \eqref{eq:growth-estimate-intro} we have
	\[
	\mu(\Omega_\lambda)^{{1}/{q}}
	\leq \left(\sum_{i=1}^{\infty}\mu(B(x_i,r_i))\right)^{1/q}
	\leq  C_\mu^{1/q}\left(\sum_{i=1}^{\infty}r_i^{n-\alpha}\right)^{1/q}
	\leq C_\mu \sum_{i=1}^{\infty}r_i^{s(n-\delta)}.
	\]
	Taking the infimum over all such coverings, we get
	\[
	\mu(\Omega_\lambda)^{{1}/{q}}\leq C_\mu{\mathcal{H}}_{\infty}^{s(n-\delta)}(\Omega_\lambda).
	\]
	Therefore, \eqref{eq:est-mu} gives 
	\[
	\left\| f^{*} - \vint\Omega f \right\|_{L^q(\Omega, d\mu)}
	\leq
	2C_\mu\int_{0}^{\infty}{\mathcal{H}}_{\infty}^{s(n-\delta)}(\Omega_\lambda)\,d\lambda.
	\]
	Now conclusion follows from Theorem \ref{functional-boxing}.
\end{proof}




Now we prove the fractional Hardy-type inequality (Corollary \ref{cor:1-john-hardy-intro}).
\begin{proof}[Proof of Corollary \ref{cor:1-john-hardy-intro}]
	Note that for any ball \(B(x,r) \subset \mathbb{R}^n\), a direct computation shows that
	\[
	\int_{B(x,r)}\frac{1}{|y|^{\alpha}}\,dy
	\leq
	\int_{B(0,r)}\frac{1}{|y|^{\alpha}}\,dy
	=
	\frac{n\omega_{n}}{n-\alpha}r^{n-\alpha}\leq \frac{n\omega_{n}}{n-\delta}r^{n-\alpha}\leq \frac{n\omega_{n}}{n-1}r^{n-\alpha}.
	\]
	Therefore, by taking $d\mu=dx/|x|^{\alpha}$ and $C_\mu=n\omega_{n}/(n-1)$ in Theorem \ref{cor:1-john-Trace-intro}, the conclusion follows.
\end{proof}

The second corollary is the Poincaré--Wirtinger inequality (Corollary \ref{intro:s-john-gFPI}). The proof of this result also highlights the critical role of the coefficient \(1-\delta\). Before stating the
corollary,
we first introduce a necessary definition.  The space of {\it functions of bounded variation} $BV(\Omega)$ is defined as
\[
BV(\Omega):=\left\{f\in L^{1}(\Omega):\left\lVert Df\right\rVert (\Omega):=\sup\left\{ \int_\Omega f \, \mathrm{div}\phi \, dx \mid \phi \in C_c^1(\Omega; \mathbb{R}^n), |\phi| \leq 1 \right\} < \infty \right\} .
\]
If \(f\in BV(\Omega)\), then $f$ is called a function of bounded variation (abbreviated as BV function). For relevant properties of BV functions, see \cite{EGbook}.

\begin{proof}[Proof of Corollary \ref{intro:s-john-gFPI}]
It suffices to show	that \eqref{introeq:smooth-1} holds for every $f\in BV(\Omega)$. We first prove this for $f\in C^{\infty}(\Omega)\cap BV(\Omega).$
To this end,  we split the proof into the following two cases.

{\bf Case 1:} $s=\frac{n}{n-1}$. In this case, $q=1$ and $\alpha=0$. By repeating the proof of \cite[Corollary 5]{HK98}, we obtain that there exists a constant $C_{8}^\prime=C_{8}^\prime(n, C_J, \diam \Omega)$ such that 
$$	\left\lVert f-\vint{\Omega}f\right\rVert _{L^{1}(\Omega)} \leq C_{8}^\prime\int_{\Omega} |Df|\, dx.$$
Since $\alpha=0$, \eqref{eq:growth-estimate-intro} implies that $\mu(A)\leq C(C_\mu, n) |A|$ for every Borel set $A\subset \Omega$. Therefore, 
$$	\left\lVert f-\vint{\Omega}f\right\rVert _{L^{q}(\Omega,d\mu)}\leq C(C_\mu, n) \left\lVert f-\vint{\Omega}f\right\rVert _{L^{1}(\Omega)}, $$
which gives the inequality \eqref{introeq:smooth-1} with $C=C(C_\mu, n) C_{8}^\prime$. 

{\bf Case 2:} $1\leq s<\frac{n}{n-1}$. In this case, $1\leq s\leq \frac{n}{n-\delta}$ whenever $\delta$ is sufficiently close to $1$. Therefore, we are able to use the fractional Poincar\'e--Wirtinger trace inequality in Corollary \eqref{cor:1-john-Trace-intro} and the Bourgain--Brezis--Mironescu formula \eqref{intoBBM-ar} on an arbitrary domain  to derive \eqref{introeq:smooth-1}. To do so, we divide the argument into the following two subcases.

\underline{Subcase 2-1:} $1<q\leq \frac{n}{s(n-1)}.$ Let $q_\delta=\frac{q(n-1)}{n-\delta}$. Since $q>1$, we have $1\leq q_{\delta}\leq \frac{n}{s(n-\delta)}$  when $\delta$ is sufficiently close to 1. Therefore, by Theorem \ref{cor:1-john-Trace-intro}, when $\delta$ is sufficiently close to $1$, we have \begin{align}\label{q_delta-1}
	\left\| f - \vint\Omega f \right\|_{L^{q_\delta}(\Omega, d\mu)} \leq C_{PW}(1-\delta)\int_\Omega \int_{|x - y| < \tau d(y)}\frac{\left\lvert f(x)-f(y)\right\rvert }{\left\lvert x-y\right\rvert^{n+\delta} }\,dx\,dy,
\end{align}
where $C_{PW}=C(n, \tau, s, C_\mu, C_J, \diam\Omega)$ is the positive constant in Theorem \ref{cor:1-john-Trace-intro}.
 We now let $\delta \to 1^-$ on both sides of \eqref{q_delta-1}. It follows from  \eqref{intoBBM-ar} that the right-hand side of \eqref{q_delta-1} converges to $C_{PW}K_n\int_{\Omega}|Df|\,dx$. Since $q_\delta\rightarrow q$ as $\delta\rightarrow 1^-$, by
the monotone convergence and the dominated convergence theorems, the left-hand side of \eqref{q_delta-1} converges to $\left\lVert f-\vint{\Omega}f\right\rVert _{L^{q}(\Omega,d\mu)}$. Consequently,
letting $\delta \to 1^-$ in \eqref{q_delta-1},
we have 
\begin{align}\label{case1-1}
\left\lVert f-\vint{\Omega}f\right\rVert _{L^{q}(\Omega,d\mu)} \leq C_{PW}K_n\int_{\Omega}|Df|\,dx,
\end{align}
which gives \eqref{introeq:smooth-1} with $C=C_{PW}K_n$.

\underline{Subcase 2-2:} $q=1.$ In this case, $\alpha=n-s(n-1)$. For each $k\in \mathbb N$, let  $\mathcal{D}_k$ be the collection of dyadic cubes of $\IR^n$ with the side length $2^{-k}.$ 
Since $n-\alpha\in [n-1, n)$, there exists a constant $C_\mu^\prime=C(C_\mu, n)$ such that for each $k\in \mathbb Z$ and each dyadic cube $P\in \mathcal D_k$, 
\begin{equation}\label{mu-cube}
	\mu(P)\leq C_\mu^\prime \ell(P)^{n-\alpha},
\end{equation}
where $\ell(P)$ denotes the side length of the dyadic cube $P$. 

We define a truncation measure $\mu_k$ of the measure \(\mu\) by setting
\[
\mu_k(U):=\sum_{P\in \mathcal{D}_k}\frac{|U\cap P|}{|P|}\mu(P).
\]
For every $1<p<\frac{n}{n-1}$, let $\alpha_p=n-ps(n-1)$. Then $0<\alpha_p<\alpha$ and $\alpha_p\rightarrow \alpha$ as $p\rightarrow 1^+$. For each dyadic cube $P'\in \mathcal{D}_{k'}$, if $k'\leq k$, it follows from \eqref{mu-cube} that
\begin{align}\label{es-mu_k-1}
	\mu_k(P')&=\sum_{P\in \mathcal{D}_k}\frac{|P' \cap P|}{|P|}\mu(P')=\mu(P')\leq C^\prime_\mu \ell(P')^{n-\alpha}\notag\\
	&=C^\prime_{\mu}2^{k'(\alpha-\alpha_p)}\ell(P')^{n-\alpha_p}\leq C^\prime_{\mu}2^{k(\alpha-\alpha_p)}\ell(P')^{n-\alpha_p}.
\end{align}
If $k'>k$, there exist a  unique $P_k\in \mathcal{D}_k$ such that $P'\subset P_k$. Then it follows from \eqref{mu-cube} that
\begin{align}\label{es-mu_k-2}
	\mu_k(P')&=\sum_{P\in \mathcal{D}_k}\frac{|P' \cap P|}{|P|}\mu(P')=\frac{|P'|}{|P_k|}\mu(P_k)=2^{kn-k'n}\mu(P_k)\leq C'_\mu 2^{kn-k'n}\ell(P_k)^{n-\alpha}\notag\\
	&=C'_\mu2^{k\alpha-k'\alpha_p}\ell(P')^{n-\alpha_p}\leq C'_\mu 2^{k(\alpha-\alpha_p)} 2^{\alpha_p(k-k')} \ell(P')^{n-\alpha_p}\leq C'_\mu2^{k(\alpha-\alpha_p)} \ell(P')^{n-\alpha_p}.
\end{align}
By  \eqref{es-mu_k-1} and \eqref{es-mu_k-2}, we obtain that for each dyadic cube $P'$, 
$$\mu_k(P')\leq C'_\mu2^{k(\alpha-\alpha_p)} \ell(P')^{n-\alpha_p}.$$
Then by a standard covering argument,  we have $\mu_k\leq 2^{k(\alpha-\alpha_p)}C'_\mu\widehat {\mathcal{H}}_\infty^{ n-\alpha_p}$.  Since we know from \eqref{dyadic-comparable} that $\mathcal H^{n-\alpha}_\infty$ is equivalent to $\widehat {\mathcal{H}}_\infty^{ n-\alpha}$ with equivalence constants depending only on $n$, there exists a constant $C_\mu^{''}=C(C_\mu, n)$ 
{such that $\mu_k\leq C_\mu^{''} 2^{k(\alpha-\alpha_p)} \mathcal{H}^{n-\alpha_p}_\infty$.}
By repeating the arguments in Subcase~2-1, we obtain that  
\begin{align*}
	\left\| f - \vint\Omega f \right\|_{L^{p}(\Omega, d\mu_k)}  \leq 2^{k(\alpha-\alpha_p)}C_\mu^{''}C_{PW}K_n\int_{\Omega}|Df|\,dx.
\end{align*}

{By letting $p\to 1^{+}$, so that $\alpha_p\rightarrow \alpha^{-}$,}
\begin{align}\label{mu_k}
\left\| f - \vint\Omega f \right\|_{L^{1}(\Omega, d\mu_k)} \leq C_\mu^{''}C_{PW}K_n\int_{\Omega}|Df|\,dx.
\end{align}

For any open set $U$,
via the definition of $\mu_k$ it is clear that $\mu(U)\le \liminf_{k\to\infty}\mu_k(U)$.
Thus, by Cavalieri's principle and Fatou's lemma, it follows from \eqref{mu_k} that
\begin{align*}
\int_{\Omega}\left\lvert f-\vint\Omega f\right\rvert \,d\mu
&=\int_{0}^{\infty}\mu\left(\left\{x\in \Omega:|f(x)-f_{\Omega}|>t\right\} \right)\,dt\\
&\le \int_{0}^{\infty}\liminf_{k\to\infty}\mu_k\left(\left\{x\in \Omega:|f(x)-f_{\Omega}|>t\right\} \right)\,dt\\
&\leq \liminf_{k\to\infty}\int_{0}^{\infty}\mu_k\left(\left\{x\in \Omega:|f(x)-f_{\Omega}|>t\right\} \right)\,dt\\
&=\liminf_{k\to\infty}\int_{\Omega}\left\lvert f-\vint\Omega f\right\rvert \,d\mu_k
\leq C_\mu^{''}C_{PW}K_n\int_{\Omega}|Df|\,dx,
\end{align*}
which gives \eqref{introeq:smooth-1} with $C= C_\mu^{''}C_{PW}K_n$.

Therefore, we have proved that the conclusion of the corollary
holds for any $f\in  C^{\infty}(\Omega)\cap BV(\Omega).$ 
Now for any $f\in BV(\Omega)$, we know that $f$ can be approximated by smooth functions (cf. \cite[Theorem 5.3]{EGbook}), i.e. there exist functions $\{f_k\}_{k=1}^\infty \subset BV(\Omega) \cap C^\infty(\Omega)$ such that
\begin{align*}
f_k \to f~ \text{in}~ L^1(\Omega)\quad\text{and}\quad\|D f_k\|(\Omega) \to \|D f\|(\Omega) ~\text{as}~ k \to \infty.
\end{align*}
Moreover, from the proof of \cite[Theorem 5.3]{EGbook}, the smooth functions $f_k$ can be constructed by using the standard mollification as we did in Proposition \ref{prop:fracSob-dense}.
Thus, according to \cite[Theorem 5.20]{EGbook}, the same reasoning as the one in Remark \ref{rem-dense}  implies that for $\mathcal{H}^{n-1}$-a.e. $x\in\Omega$
\[
f^{*}(x)=\lim_{k\to \infty}f_k(x).
\]
Since $n-1<n-\alpha\leq n$, then every $\mathcal{H}^{n-1}$ measure zero set is $\mu$-measure zero,
 repeat  the density arguments  in Theorem \ref{functional-boxing} by replacing the Hausdorff content to Radon measure $\mu$  and we obtain that \eqref{introeq:smooth-1} holds for every $f\in BV(\Omega)$. Therefore, the proof is complete.
\end{proof}

In the course of the proof of Theorem \ref{geometric-boxing}, we actually find that the functional Boxing inequality implies the geometric Boxing inequality and vice versa. That is, the functional and geometric Boxing inequalities are equivalent. The precise statement is as follows.
\begin{prop}\label{prop-equivalence}
{\it Let $\delta, \tau\in (0, 1)$ and $n-\delta\leq \beta\leq n$.  Then the following two conditions are equivalent.
\begin{itemize}
	\item[(1)] For any $\gamma\in (0, 1)$, there exists a constant $C>0$ independent of $\delta$ such that for every Lebesgue measurable set  $U\subset \Omega$ with $|U|/|\Omega|\leq \gamma$, one can find a Lebesgue measure zero set $\mathcal{N}_{U}$ such that 
		\begin{align*}
			\mathcal{H}_{\infty} ^{\beta}(U\setminus \mathcal{N}_{U})\le C(1-\delta)\mathcal P_{\delta, \tau}(U,\Omega).
	\end{align*} 
					If $U$ is an open set, then
							\begin{align*}
						\mathcal{H}_{\infty} ^{\beta}(U)\le C(1-\delta)\mathcal P_{\delta, \tau}(U,\Omega).
					\end{align*} 
	\item[(2)] There exists a constant $C>0$ independent of $\delta$ such that for every $f\in  L^1(\Omega)$, 
	\begin{align*}
			\int_{\Omega}\left\lvert f^*-\vint{\Omega} f\right\rvert \,d\mathcal{H}_{\infty} ^{\beta} \leq C(1-\delta)\int_\Omega \int_{|x-y|<\tau d(y)}\frac{\left\lvert f(x)-f(y)\right\rvert }{\left\lvert x-y\right\rvert^{n+\delta} }\,dx\,dy.
		\end{align*}
\end{itemize} }
\end{prop}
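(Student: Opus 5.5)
The direction (2)$\Rightarrow$(1) follows the proof of Theorem \ref{geometric-boxing} essentially verbatim. Let $U\subset\Omega$ be measurable with $|U|/|\Omega|\le\gamma$. We may assume $\mathcal P_{\delta,\tau}(U,\Omega)<\infty$, so $\chi_U\in\mathcal W^{\delta,1}(\Omega)$. Set $\mathcal N_U=\{\chi_U\neq\chi_U^*\}$; this set is Lebesgue null. On $U\setminus\mathcal N_U$ we have $\chi_U^*-\vint{\Omega}\chi_U=1-|U|/|\Omega|\ge 1-\gamma$. Monotonicity of the Choquet integral and (2) applied to $f=\chi_U$ then give
\[
\mathcal H^\beta_\infty(U\setminus\mathcal N_U)\le (1-\gamma)^{-1}C(1-\delta)\mathcal P_{\delta,\tau}(U,\Omega).
\]
For open $U$, every point of $U$ is a Lebesgue point with value $1$, so $\mathcal N_U\cap U=\emptyset$. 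The constant $(1-\gamma)^{-1}C$ does not depend on $\delta$.

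For (1)$\Rightarrow$(2), I will first treat continuous $f$ and then pass to $L^1$ by the density argument from the proof of Theorem \ref{functional-boxing}. This is legitimate because $\beta\ge n-\delta$, so Theorem \ref{thm:fine property} gives $\mathcal H^\beta_\infty(\Omega\setminus\mathcal L_f)=0$. Proposition \ref{prop:fracSob-dense}, Remark \ref{rem-dense}, and the increasing-set property of $\widehat{\mathcal H}^\beta_\infty$ combined with Fatou's lemma then transfer the inequality from the approximants $f_k$ to $f^*$. So it suffices to prove (2) for $f\in C(\Omega)\cap\mathcal W^{\delta,1}(\Omega)$. Take $m$ to be a median of $f$, so that $|\{f>m\}|\le|\Omega|/2$ and $|\{f<m\}|\le|\Omega|/2$. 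The sets $\Omega_t^+=\{f-m>t\}$ and $\Omega_t^-=\{m-f>t\}$ are open with measure ratio at most $\gamma=1/2$, so (1) applies to them with no null set. The layer-cake formula then gives
\[
\int_\Omega|f-m|\,d\mathcal H^\beta_\infty\le 2\int_0^\infty\big(\mathcal H^\beta_\infty(\Omega_t^+)+\mathcal H^\beta_\infty(\Omega_t^-)\big)\,dt\le 2C(1-\delta)\int_0^\infty\big(\mathcal P_{\delta,\tau}(\Omega_t^+,\Omega)+\mathcal P_{\delta,\tau}(\Omega_t^-,\Omega)\big)\,dt.
\]
The coarea identity $\int_{\mathbb R}|\chi_{\{f>t\}}(x)-\chi_{\{f>t\}}(y)|\,dt=|f(x)-f(y)|$ together with Fubini, as in \eqref{fubini-pre}, bounds the right-hand side by $2C(1-\delta)[f]_{\mathcal W^{\delta,1}(\Omega)}$.

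The main obstacle is replacing the median $m$ by the mean $f_\Omega$ with a constant independent of $\delta$. We have
\[
\int_\Omega|f-f_\Omega|\,d\mathcal H^\beta_\infty\le 2\int_\Omega|f-m|\,d\mathcal H^\beta_\infty+2|m-f_\Omega|\,\mathcal H^\beta_\infty(\Omega),
\]
and $|m-f_\Omega|\le \vint{\Omega}|f-m|\,dx$. It remains to bound $\int_\Omega|f-m|\,dx$ by the Choquet integral. Here I would use $|V|\le C(n,\diam\Omega)\,\mathcal H^\beta_\infty(V)$ for $\beta\le n$. This follows from $r^n\le (\diam\Omega)^{n-\beta}r^\beta$ for covering balls of radius $r\le\diam\Omega$; larger balls can be replaced by a single ball containing $\Omega$. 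Applying this to each level set and integrating in $t$ yields $\int_\Omega|f-m|\,dx\le C\int_\Omega|f-m|\,d\mathcal H^\beta_\infty$. Since $\beta\in[n-1,n]$, the factors $\mathcal H^\beta_\infty(\Omega)$ and $|\Omega|^{-1}$ are bounded in terms of $n$ and $\Omega$ alone, uniformly in $\delta$. This completes (2) with a $\delta$-independent constant.
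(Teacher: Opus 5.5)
Your proof is correct. Your direction (2)$\Rightarrow$(1) and your final density step (Proposition \ref{prop:fracSob-dense}, Remark \ref{rem-dense}, Theorem \ref{thm:fine property}, Fatou) coincide with the paper's.

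The genuine difference is in (1)$\Rightarrow$(2) for continuous $f$. The paper fixes a ball $B_0=B(x_0,d(x_0)/18)$ and applies (1) only to open sets missing $B_0$, whose measure ratio is at most $\gamma_0=1-|B_0|/|\Omega|$. It feeds this through Proposition \ref{lem:sJ-WPI-B0} for functions vanishing on $B_0$. It then reruns the cutoff argument of Theorem \ref{thm:sJ-WPI}, which needs the Choquet--Poincar\'e inequality on $3B_0$ from \cite{MPW24}, the chaining change of variables, and the estimate \eqref{estimate-f2-1} from \cite{BBM02}; this gives the inequality with $f_{3B_0}$. Finally it uses Lemmas \ref{lem:1J-SPI}--\ref{lem:1J-Fbox} to pass to $f_\Omega$.

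You replace all of this with the classical median/coarea argument. With $\gamma=1/2$, the superlevel sets of $(f-m)^{\pm}$ are directly admissible in (1). The identity $|g^+(x)-g^+(y)|+|g^-(x)-g^-(y)|=|g(x)-g(y)|$ recovers the energy exactly. The median-to-mean step needs only $|V|\le C(n,\diam\Omega)\,\mathcal H^\beta_\infty(V)$.

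Your route is shorter and self-contained, and it uses (1) only at the single value $\gamma=1/2$. It also makes the $\delta$-independence of the constant transparent: only $C_{1/2}$, $n$, $\diam\Omega$, $|\Omega|$ and uniform bounds on $\omega_\beta$ for $\beta\in[n-1,n]$ enter. The paper's route, in exchange, reuses the Section 3 machinery verbatim. It also shows that (2) follows from the Boxing estimate on the localized family of open sets avoiding a fixed ball. That is exactly the form the John condition produces in Proposition \ref{prop:sJ-box}, and the form reused later, e.g.\ in the proof of Corollary \ref{Box-relative}.
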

	\begin{proof}
		(1) $\Rightarrow$ (2). Fix a point $x_0\in \Omega$ and let $B_0=B(x_0,d(x_0)/18)$. For any open set $U\subset\Omega$ with $U\cap B_0=\emptyset,$ we have
		$$\frac{|U|}{|\Omega|}\leq \frac{|\Omega\setminus B_0|}{|\Omega|}=1-\frac{|B_0|}{|\Omega|}=:\gamma_0.$$
		Therefore, applying (1) to $U$ with $\gamma=\gamma_0$,
		we obtain an estimate similar to Proposition \ref{prop:sJ-box}. 
		
		Then we incorporate this estimate into the proof of Proposition \ref{lem:sJ-WPI-B0}, replace the Hausdorff content $\mathcal{H}_{\infty}^{n-\alpha}$ by $\mathcal{H}_{\infty}^{\beta}$, set $q=1$, and repeat the same argument as in the proof of Proposition \ref{lem:sJ-WPI-B0}. Consequently, we obtain that
        there exists a constant $C>0$ independent of $\delta$ such that the following inequality 
		\[
		\int_{\Omega}\left\lvert f\right\rvert \,d\mathcal{H}_{\infty} ^{\beta} \leq C(1-\delta)\int_\Omega \int_{|x-y|<\tau d(y)}\frac{\left\lvert f(x)-f(y)\right\rvert }{\left\lvert x-y\right\rvert^{n+\delta} }\,dx\,dy,
		\]
		holds for every $f\in C(\Omega)\cap L^1(\Omega)$ with $f|_{B_0}=0$.
		
		Moreover, applying the same argument as in the proof of Theorem \ref{thm:sJ-WPI} to the above inequality, we obtain that there exists a constant $C^{\prime}>0$ independent of $\delta$ such that the following inequality 
		\[
		\int_{\Omega}\left\lvert f-\vint{3B_0}f\right\rvert \,d\mathcal{H}_{\infty} ^{\beta} \leq C^{\prime}(1-\delta)\int_\Omega \int_{|x-y|<\tau d(y)}\frac{\left\lvert f(x)-f(y)\right\rvert }{\left\lvert x-y\right\rvert^{n+\delta} }\,dx\,dy,
		\]
        holds for every $f\in C(\Omega)\cap L^1(\Omega)$. Repeating the same argument as in Lemmas~\ref{lem:1J-SPI} and \ref{lem:1J-Fbox} yields  a constant $C^{\prime\prime}>0$  independent of $\delta$ such that for every
        {$f\in L^1(\Omega)\cap C(\Omega)$,}
        \[
		\int_{\Omega}\left\lvert f-\vint{\Omega}f\right\rvert \,d\mathcal{H}_{\infty} ^{\beta} \leq C^{\prime\prime}(1-\delta)\int_\Omega \int_{|x-y|<\tau d(y)}\frac{\left\lvert f(x)-f(y)\right\rvert }{\left\lvert x-y\right\rvert^{n+\delta} }\,dx\,dy.
		\]
       Since $\beta\geq n-\delta$, by using Theorem~\ref{thm:fine property} and the same density argument as in the proof of Theorem \ref{functional-boxing},  we complete the proof of (1) $\Rightarrow$ (2).

		(2) $\Rightarrow$ (1). The proof of this direction is the same as the proof of Theorem \ref{geometric-boxing}.

{Furthermore, the arguments show that in both directions, the constants appearing in the inequalities are independent of $\delta$. This completes the proof.}
	\end{proof}

    Before proving Theorem \ref{Box-John}, we first state a crucial result, which follows from Guo \cite[Theorem 1.4]{Guo17} by taking $p=1$. Let $\delta, \tau\in (0, 1)$. A domain  $\Omega\subset \mathbb R^n$ is called a fractional $(q, 1)$-Sobolev-Poincar\'e domain, if there exists a constant $C>0$ such that 
    \begin{align}\label{q-FSP}
    	\left\| f - \vint\Omega f \right\|_{L^{q}(\Omega)}\leq C\int_\Omega \int_{\left\lvert x-y\right\rvert <\tau d(y)}\frac{\left\lvert f(x)-f(y)\right\rvert }{\left\lvert x-y\right\rvert^{n+\delta} }\,dx\,dy
    \end{align}
    holds for every $f\in C(\Omega)\cap L^1(\Omega)$. 
     
\begin{thm}\label{Guo-lem}
Let $\delta, \tau\in (0, 1)$. Assume that $\Omega \subset \mathbb{R}^n$ is a bounded domain that satisfies the separation property.   If $\Omega$ is a fractional $(q,  1)$-Sobolev-Poincar\'e domain for some $q > 1$, then $\Omega$ is an $s$-diameter John domain with $s=\delta/(n-\delta)(q-1)$.
\end{thm}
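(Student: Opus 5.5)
The plan is to obtain the statement as the case $p=1$ of Guo's characterization \cite[Theorem 1.4]{Guo17}. To make the reduction transparent, I would re-run the Buckley--Koskela scheme \cite{BP95} with fractional test functions, keeping track of where $p=1$ enters. Throughout, $\Omega$ is bounded, has the separation property with center $x_0$ and constant $C_0$, and satisfies \eqref{q-FSP} for continuous $L^1$ functions. The goal is that for every $x\in\Omega$ the separation curve $\gamma$ satisfies
\[
\diam \gamma([0,t])\lesssim d(\gamma(t))^{1/s},\qquad s=\delta/((n-\delta)(q-1)),
\]
possibly after replacing the center by $x_0$ and the constant $C_0$ by a larger one.

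\emph{Step 1: a capacity-type test function.} Fix $t$ and put $r=d(\gamma(t))$ and $B_t=B(\gamma(t),C_0r)$. If $\gamma([0,t])\not\subset B_t$, let $A$ be the union of the components of $\Omega\setminus\partial B_t$ that contain points of $\gamma([0,t])\setminus B_t$; these do not contain $x_0$. Define a Lipschitz function $u$ on $\Omega$ as follows:
\begin{itemize}
\item $u=1$ on $A\setminus 2B_t$;
\item $u=0$ on $\Omega\setminus(A\cup 2B_t)$;
\item $u$ is linearly interpolated in $A\cap(2B_t\setminus B_t)$, with $\Lip u\lesssim 1/r$;
\item $u=0$ on $B_t$.
\end{itemize}
Because $A$ is separated from the rest of $\Omega$ by $\partial B_t$, the only pairs $(x,y)$ with $|x-y|<\tau d(y)$ and $u(x)\ne u(y)$ have $y$ within distance $\lesssim r$ of $2B_t$. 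One has to check that a ball $B(y,\tau d(y))$ starting in $A$ cannot reach the complementary component except through a neighbourhood of $\partial B_t$; this holds since a ball lies in $\Omega$ and the segment joining the two points would cross $\partial B_t$. Splitting into the Lipschitz part (short pairs) and the jump part (long pairs), as in \eqref{eq:x-y}, gives
\[
\int_\Omega\int_{|x-y|<\tau d(y)}\frac{|u(x)-u(y)|}{|x-y|^{n+\delta}}\,dx\,dy\lesssim \frac{r^{n-\delta}}{\delta(1-\delta)}.
\]

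\emph{Step 2: using \eqref{q-FSP}.} Since $u$ vanishes on a fixed ball around $x_0$, we can assume $|A|\le|\Omega|/2$, enlarging $C_0$ if needed. Then $\|u-u_\Omega\|_{L^q}\gtrsim |A\setminus 2B_t|^{1/q}$, which yields
\[
|A\setminus 2B_t|^{1/q}\lesssim r^{n-\delta}.
\]
This is the place where $p=1$ enters: the energy is linear in $u$, so no $p$-th powers appear.

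\emph{Step 3: volume growth and the exponent.} To turn the volume bound into a diameter bound, I would prove a measure-density estimate along $\gamma$, following Guo. Apply the same test-function argument to the level sets $\Omega\cap\{|z-x|<\rho\}$ intersected with the far component. This gives a recursive inequality
\[
V(\rho)^{1/q}\lesssim(V(\rho')-V(\rho))\cdots,
\]
where $V$ is a volume function and $\rho<\rho'$. Iterating it, in the way one proves $|B|^{(q-1)/q}$-type lower bounds from Sobolev inequalities, gives
\[
|A\setminus 2B_t|\gtrsim \diam(\gamma([0,t])\setminus 2B_t)^{\,n\cdot(\text{exponent from iteration})}.
\]
Combining this with Step 2 and solving for $\diam\gamma([0,t])$ in terms of $r$ should produce exactly $s=\delta/((n-\delta)(q-1))$, the exponent in Guo's theorem with $p=1$.

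The main obstacle is Step 3. The lower volume bound is the genuinely nontrivial part, and one has to verify that Guo's iteration, written for the improved (truncated) energy $\mathcal W^{\delta,p}$, still goes through with $p=1$. I would first try to quote \cite[Theorem 1.4]{Guo17} directly, checking that its hypotheses only use the truncated energy with $\tau\in(0,1)$ and continuous test functions, which is precisely the class in \eqref{q-FSP}. I would fall back on redoing the iteration only if that check fails.
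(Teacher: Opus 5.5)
Your proposal takes the same route as the paper, which gives no independent argument and simply records the statement as the case $p=1$ of Guo's characterization \cite[Theorem 1.4]{Guo17}, with \eqref{q-FSP} (truncated energy, continuous $L^1$ test functions) as the hypothesis. The Buckley--Koskela re-derivation you sketch is therefore optional. If you do carry it out, note that in Step~3 the nonlocal energy is not confined to an annulus, so the recursion should read $V(\rho)^{1/q}\lesssim \rho^{-\delta}V(2\rho)$; this iterates to $V(\rho)\gtrsim\rho^{\delta q/(q-1)}$ and, combined with Step~2, gives exactly $s=\delta/((n-\delta)(q-1))$.
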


We are ready to prove Theorem \ref{Box-John}.
\begin{proof}[Proof of Theorem \ref{Box-John}]
Given $n-\delta\leq \beta<n$, 
 repeating the arguments in the proof of (1) $\Rightarrow $ (2) in Proposition~\ref{prop-equivalence} yields the following functional Boxing inequality: there exists a constant $C>0$ such that for every $f\in C(\Omega)\cap L^1(\Omega)$, 
	\begin{align*}
			\int_{\Omega}\left\lvert f-\vint{\Omega} f\right\rvert \,d\mathcal{H}_{\infty} ^{\beta} \leq C(1-\delta)\int_\Omega \int_{\left\lvert x-y\right\rvert <\tau d(y)}\frac{\left\lvert f(x)-f(y)\right\rvert }{\left\lvert x-y\right\rvert^{n+\delta} }\,dx\,dy.
		\end{align*}
		
 By the same reasoning as in the proof of  
 Theorem \ref{cor:1-john-Trace-intro}
 with $\mu$  the Lebesgue measure and $q=n/\beta>1$, we obtain  that $\Omega$ is a fractional $(q, 1)$-Sobolev-Poincar\'e domain. Therefore, it follows from Theorem~\ref{Guo-lem} that $\Omega$ is an $s$-diameter John domain with $$s=\frac{\delta}{(n-\delta)(q-1)}=\frac{\delta\beta}{(n-\delta)(n-\beta)},$$ 
 which completes the proof.
\end{proof}
With the help of Theorem \ref{Box-John}, we now prove Corollary \ref{Box-relative}.
\begin{proof}[Proof of Corollary \ref{Box-relative}]
(1) $\Leftrightarrow$ (2). This follows directly from Theorem \ref{geometric-boxing} (with $s=1$) and Theorem \ref{Box-John} (with $\beta=n-\delta$).

(1) $\Rightarrow$ (3).  
 Let $U\subset \Omega$. It follows from Corollary \ref{intro-s-john-FSPI} with $f=\chi_U$ that
\[
\min\{|U|, |\Omega\setminus U|\}^{\frac{n-\delta}{n}}\leq C(1-\delta) P_{\delta, \tau}(U, \Omega),
\]
where the constant $C>0$ is independent of $\delta$.
Thus, we complete the proof of (1) $\Rightarrow $ (3).

(3) $\Rightarrow $ (1). To this end,  fix a point $x_0\in \Omega$ and let $B_0=B(x_0,d(x_0)/18)$. For any open set $U\subset\Omega$ with $U\cap B_0=\emptyset,$ following the same argument as in the proof of (1) $\Rightarrow $ (2) in Corollary \ref{prop-equivalence}, we have $|U|/|\Omega|\leq\gamma_0.$ Applying (3) to $U$ yields that
\[
 |U|^{\frac{n-\delta}{n}}\min\{1, (1-\gamma_0)/\gamma_0\}^{\frac{n-\delta}{n}}\leq\min\{|U|, |\Omega\setminus U|\}^{\frac{n-\delta}{n}}\leq C(1-\delta) P_{\delta, \tau}(U, \Omega),
\]
which is an analog of Proposition \ref{prop:sJ-box}. Taking this estimate into the proof of Proposition~\ref{lem:sJ-WPI-B0} and Theorem \ref{thm:sJ-WPI} with $q=n/(n-\delta)$ and $\mathcal H^{n-\alpha}_\infty$ replaced by the Lebesgue measure, we obtain the following inequality, similar to \eqref{eq:sJ-WPI}: there exists a constant $C>0$ such that 
\begin{equation}\label{cor-FSP}
\|f-f_{3B_0}\|_{L^{\frac{n}{n-\delta}}(\Omega)}\leq C(1-\delta) \int_\Omega \int_{\left\lvert x-y\right\rvert <\tau d(y)}\frac{\left\lvert f(x)-f(y)\right\rvert }{\left\lvert x-y\right\rvert^{n+\delta} }\,dx\,dy
\end{equation}
holds for every $f\in C(\Omega)$. 

For $f\in C(\Omega)\cap L^1(\Omega)$, since 
$$\|f-f_{\Omega}\|_{L^{\frac{n}{n-\delta}}(\Omega)}\leq \|f-f_{3B_0}\|_{L^{\frac{n}{n-\delta}}(\Omega)}+\|f_\Omega-f_{3B_0}\|_{L^{\frac{n}{n-\delta}}(\Omega)}\leq 2\|f-f_{3B_0}\|_{L^{\frac{n}{n-\delta}}(\Omega)},$$
it follows from \eqref{cor-FSP} that $\Omega$ is a fractional $(n/(n-\delta), 1)$-Sobolev-Poincar\'e inequality domain. Thus, Theorem~\ref{Guo-lem} implies that $\Omega$ is a $1$-John domain, which completes the proof of (3) $\Rightarrow $ (1).
\end{proof}

\section{\large Results on special domains: $1$-John domains and Lispchitz domains}\label{John and Lip}
In this section, we make a conclusion about the all obtained inequalities on special domains: $1$-John domains and Lipschitz domains.
\medskip

\noindent\textbf{Results on $1$-John domains.} By taking $s=1$, we obtain a collection of inequalities on $1$-John domains, which we state in the following. Since in this case these inequalities are scaling invariant, their constants are independent of $\diam \Omega$; see also Remark~\ref{rem:C_1}. 

Let $\Omega\subset\mathbb R^n$ be a $1$-John domain with John constant $C_J\geq 1$.   Let $\delta, \tau \in (0, 1),$ $1\leq q\leq \frac{n}{n-\delta}$ and $\alpha=n-q(n-\delta)$.  Assume that $\mu$ is a Radon measure on $\mathbb R^n$ with $\mu(B(x, r))\leq C_\mu r^{n-\alpha}$ for every ball $B(x, r)$. For any $f\in L^1(\Omega)$, recall that $\vint\Omega f =\frac{1}{|\Omega|}\int_{\Omega} f(x)\, dx$ and $f^*$ is the precise representative of $f$. Then there exists a constant $C$ depending only on $n, \tau$ and $C_J$ (additionally on $\gamma$ in (1), and on $C_\mu$ in (4) and (7)) such that the following inequalities hold.
\begin{itemize}
	\item[(1)] {\it Geometric boxing inequality.} If  $U\subset \Omega$ is a Lebesgue measurable set with $|U|/|\Omega|\leq \gamma\in (0, 1)$, then there exist a Lebesgue measure zero set $\mathcal{N}_{U}$ such that 
	\begin{align*}
		\mathcal{H}_{\infty} ^{n-\delta}(U\setminus \mathcal{N}_{U})\le C(1-\delta)\mathcal P_{\delta, \tau}(U,\Omega).
	\end{align*} 
	
	\item[(2)] {\it Functional boxing inequality.} 	For every $f\in L^1(\Omega)$,
	\begin{align*}
		\left(\int_{\Omega}\left\lvert f^{\ast }-\vint\Omega f \right\rvert  \, d\mathcal{H}_{\infty} ^{n-\delta}\right) \leq C(1-\delta)\int_\Omega \int_{\left\lvert x-y\right\rvert <\tau d(y)}\frac{\left\lvert f(x)-f(y)\right\rvert }{\left\lvert x-y\right\rvert^{n+\delta} }\,dx\,dy.
		\end{align*}	
		
		\item[(3)] {\it Relative fractional isoperimetric inequality.} For every Lebesgue measurable set $U\subset \Omega$,
		$$\min\big\{|U|, |\Omega\setminus U|\big\}^{(n-\delta)/n}\leq C(1-\delta) P_{\delta, \tau}(U, \Omega).$$
	
		\item[(4)] {\it Fractional Poincar\'e--Wirtinger inequality. }   For every function $f\in L^1(\Omega)$,
		\begin{align*}
			\left\| f^{*} - \vint\Omega f\right\|_{L^q(\Omega, d\mu)}\leq C(1-\delta)\int_\Omega \int_{|x - y| < \tau d(y)}\frac{\left\lvert f(x)-f(y)\right\rvert }{\left\lvert x-y\right\rvert^{n+\delta} }\,dx\,dy.
		\end{align*}
		
		\item[(5)] {\it Fractional Sobolev--Poincar\'e inequality.} For every $f\in L^1(\Omega)$,
		\begin{align*}
			\left\| f - \vint\Omega f\right\|_{L^{n/(n-\delta)}(\Omega)}\leq C(1-\delta)\int_\Omega \int_{|x - y| < \tau d(y)}\frac{\left\lvert f(x)-f(y)\right\rvert }{\left\lvert x-y\right\rvert^{n+\delta} }\,dx\,dy.
		\end{align*}
		
		\item[(6)] {\it Fractional Hardy-type inequality.}  For every $f\in L^{1}(\Omega)$ with $\vint\Omega f=0,$
			\begin{align}
		\left(	\int_{\Omega}|f(x)|^q\frac{\,dx}{|x|^{\alpha}}\right)^{1/q}\leq C(1-\delta)\int_\Omega \int_{|x - y| < \tau d(y)}\frac{\left\lvert f(x)-f(y)\right\rvert }{\left\lvert x-y\right\rvert^{n+\delta} }\,dx\,dy.
		\end{align}
	
	\item[(7)] {\it Poincar\'e--Wirtinger inequality.} 
Here we require $1\leq q\leq \frac{n}{n-1}$ and $\alpha=n-q(n-1)$ instead.
	For every
	$f\in BV(\Omega)$,
	\begin{align*}
		\left\lVert f^{*}-\vint\Omega f\right\rVert _{L^{q}(\Omega,d\mu)} 
		\leq C\|D f\|(\Omega).
	\end{align*}
\end{itemize} 

For the converse, it follows from Corollary \ref{Box-relative}, Proposition \ref{prop-equivalence} and Theorem \ref{Guo-lem} that, under the separation property, $\Omega$ is a $1$-John domain whenever it supports any of the inequalities (1), (2), (3) or (5). Moreover, since (5) is a special case of (4) and (6) and (7) are derived from (4),
we conclude that the $1$-John domain condition is essentially the sharp condition for domains to
support the inequalities (1)--(7). 
\medskip

\noindent\textbf{Results on Lipschitz domains.} 
Since every Lipschitz domain is $1$-John (with the John constant $C_J\ge 1$ depending only on its geometry),  the inequalities (1)--(7) automatically hold. In particular, since $[f]_{\mathcal W^{\delta,1}(\Omega)}\le [f]_{W^{\delta,1}(\Omega)}$, we may replace $\mathcal P_{\delta,\tau}(U,\Omega)$ on the right-hand side of (1) and (3) by $P_{\delta}(U,\Omega)$, and replace the improved fractional Sobolev energy on the right-hand side of (2), (4), (5) and (6) by the classical fractional Sobolev energy. We omit the detailed statements here for brevity. Moreover, the constants $C$ in these inequalities depend only on $n$ and $C_J$ (additionally on $\gamma$ in (1) and on $C_\mu$ in (4) and (7)). To the best of our knowledge, even on  Lipschitz domains, the inequalities (1)--(6) with the BBM factor $1-\delta$  are new.

\bigskip

\textbf{Acknowledgment.} 
Manzi Huang was partly supported by NSFC under the Grant No. 12371071 and the Key Project of NSF of Hunan Province under Grant No. 2026JJ30002. Jiang Li was partly supported by NSFC under Grant No. 12571081. Zhuang Wang was partly supported by NSF of Hunan Province under Grant No. 2024JJ6299, the Scientific Research Fund of Hunan Provincial Education Department  under Project No. 25B0095, and  NSFC under Grant No. 12101226.


\begin{thebibliography}{99}

\bibitem{Adams98}
D. R. Adams, {\it Choquet integrals in potential theory}, Publ. Mat. 42 (1998), 3--66.


\bibitem{BBM01}
J. Bourgain, H. Brezis, and P. Mironescu, 
\textit{ Another look at Sobolev spaces}, in: Optimal Control and Partial Differential Equations, IOS, Amsterdam, 2001, pp. 439-455.

\bibitem{BBM02}
J. Bourgain, H. Brezis, and P. Mironescu, 
\textit{Limiting embedding theorems for $W^{s,p}$ when $s\uparrow 1$ and applications},
J. Anal. Math., 87 (2002), 77--101.

\bibitem{BLP14}
L. Brasco, E. Lindgren, and E. Parini, {\it The fractional Cheeger problem},
Interfaces Free Bound. 16 (2014), no. 3, 419--458.


\bibitem{BP95}
S. Buckley, and P. Koskela, {\it Sobolev-Poincar\'e implies John}, Math. Res. Lett. 2 (1995),   577--593.

\bibitem{CaRoSa10}
L. Caffarelli, J. M. Roquejoffre, and O. Savin, \textit{Non-local minimal surfaces},
Comm. Pure Appl. Math. 63 (2010), 1111--1144.

\bibitem{Davila02}
J. Dávila, \textit{ On an open question about functions of bounded variation},
Calc. Var. Partial Differential Equations 15 (2002), no. 4, 519-527.

\bibitem{DD22}
I. Drelichman, and R. G. Durán, {\it The Bourgain-Brézis-Mironescu formula in arbitrary bounded domains},
Proc. Amer. Math. Soc. 150 (2022), no. 2, 701--708.

\bibitem{Dyda06}
B. Dyda, \textit{On comparability of integral forms}, J. Math. Anal. Appl. 318 (2006), 564--577.

\bibitem{DyIhVa16}
B. Dyda, L. Ihnatsyeva, and A. V. Vähäkangas,
\textit{On improved fractional Sobolev-Poincaré inequalities},
Ark. Mat. 54, no. 2 (2016): 437--454.

\bibitem{EGbook}
L. C. Evans, and R. F. Gariepy,
\emph{Measure theory and fine properties of functions},
Textb. Math., CRC Press, Boca Raton, FL, 2015, xiv+299 pp.

\bibitem{Guo17}
C. Guo,
\emph{ Fractional Sobolev-Poincaré inequalities in irregular domains
}, Chinese Ann. Math. Ser. B 38 (2017), no. 3, 839-856.

\bibitem{Guo-Koskela}
C. Guo, and P. Koskela, \emph{Generalized John disks}, Cent. Eur. J. Math., 12 (2014), 349--361.

\bibitem{Gustin60}
W. Gustin, \emph{  Boxing inequalities}, J. Math. Mech. 9 (1960), 229-239.

\bibitem{HK98}
P. Haj\l{}asz, and P. Koskela, {\it Isoperimetric inequalities and imbedding theorems in irregular domains}, J. London Math. Soc. (2) 58 (1998), no. 2, 425-450.

\bibitem{HH23}
P. Harjulehto, and R. Hurri-Syrjänen, {\it On Choquet integrals and Poincaré-Sobolev inequalities}, J. Funct. Anal. 284 (2023), no. 9, Paper No. 109862, 18 pp.

\bibitem{Heinonen} J.~Heinonen, {\it Lectures on analysis on metric spaces}, Universitext. Springer-Verlag, New York, 2001. x+140 pp.


\bibitem{HV13}
R. Hurri-Syrjänen, and A. V. Vähäkangas, {\it  On fractional Poincaré inequalities}, J. Anal. Math. 120 (2013), 85-104.


\bibitem{John61}
F. John, {\it  Rotation and strain,} Comm. Pure Appl. Math., 14, 1961, 391-413. 

%



\bibitem{Leo23}
G.~Leoni, {\it  A first course in fractional Sobolev spaces,} American Mathematical Society, Providence, RI, 2023, xv+586 pp.

\bibitem{MS79}
O. Martio, and J. Sarvas, {\it   Injectivity theorems in plane and space}, Ann. Acad. Sci. Fenn. Ser. A I Math., 4(2), 1979, 383-401. 

\bibitem{Mattila}
P. Mattila, \emph{Geometry of sets and measures in Euclidean spaces,} Cambridge Stud. Adv. Math. 44, Cambridge University Press, Cambridge, 1995.

\bibitem{MS02}
V. Maz'ya, and T. Shaposhnikova, {\it  On the Bourgain, Brezis, and Mironescu theorem concerning limiting embeddings of fractional Sobolev spaces}, J. Funct. Anal. 195 (2002), no. 2, 230-238.

\bibitem{Mazya03}
V. Maz'ya, {\it  Lectures on isoperimetric and isocapacitary inequalities in the theory of Sobolev spaces}, In: ``Heat Kernels and Analysis on Manifolds, Graphs, and Metric Spaces (Paris, 2002)", Contemp. Math., Vol. 338, Amer. Math. Soc., Providence, RI, 2003, 307-340.

\bibitem{MZ77}
N. G. Meyers, and W. P. Ziemer, {\it   Integral inequalities of Poincaré and Wirtinger type for BV functions},
Amer. J. Math. 99 (1977), no. 6, 1345-1360.

\bibitem{M24}
K. Mohanta, {\it Bourgain-Brezis-Mironescu formula for $W^{s,p}_{q}$ spaces in arbitrary domains},
Calc. Var. Partial Differential Equations 63 (2024), no. 2, Paper No. 31, 17 pp.

\bibitem{MPW24}
K. Myyryläinen, C. Pérez, and J. Weigt,
\textit{Weighted fractional Poincaré inequalities via isoperimetric inequalities},
Calc. Var. Partial Differential Equations, 63 (2024), no. 8, Paper No. 205, 32 pp.

\bibitem{NV91}
R. Näkki, and J. Väisälä, 
\textit{ John disks},
Exposition. Math., 1991, 9(1), 3-43.

\bibitem{B-P16}
A. Ponce, {\it Elliptic PDEs, measures and capacities}, European Mathematical Society (EMS), Zürich, 2016, x+453 pp.

\bibitem{PS20}
A. Ponce, and D. Spector,
\textit{A boxing inequality for the fractional perimeter},
Ann. Sc. Norm. Super. Pisa Cl. Sci., (5) 20 (2020), no. 1, 107--141.

\bibitem{Saksman17}
M. Pratì, and E. Saksman, \textit{A $T(1)$ theorem for fractional Sobolev spaces on domains}, J. Geom. Anal. 27 (2017), 2490--2538.

\bibitem{Serra24}
J. Serra, \textit{Nonlocal minimal surfaces: recent developments, applications, and future directions},
SeMA J. 81 (2024), 165--191.

\bibitem{SS90}
W. Smith, and D. A. Stegenga, \textit{H\"older and Poincar\'e domains},
Trans. Amer. Math. Soc., 319, 1990, 67-100.

\bibitem{Vis91}
A. Visintin, \textit{Generalized coarea formula and fractal sets}, Japan J. Indust. Appl. Math. 8 (1991), 175-201. 

\bibitem{YY08}
D. Yang, and W. Yuan, {\it A note on dyadic Hausdorff capacities},
Bull. Sci. Math. 132 (2008), no. 6, 500--509.

\bibitem{Zhou15}
Y. Zhou, {\it Fractional Sobolev extension and imbedding},
Trans. Amer. Math. Soc. 367 (2015), 959--979.

\end{thebibliography}
\end{document}